\documentclass[letterpaper, 11pt,  reqno]{amsart}

\usepackage{amsmath,amssymb,amscd,amsthm,amsxtra, esint}
\usepackage{tikz} 
\usepackage{color}
\usepackage[implicit=true]{hyperref}

\usepackage{enumitem}

\usepackage{mathtools}

\usepackage{cases}
\usepackage{verbatim} 

\headheight=8pt
\topmargin=0pt
\textheight=624pt
\textwidth=432pt
\oddsidemargin=18pt
\evensidemargin=18pt

\allowdisplaybreaks[2]

\sloppy

\hfuzz  = 0.5cm 


\setlength{\pdfpagewidth}{8.50in}
\setlength{\pdfpageheight}{11.00in}

\usetikzlibrary{shapes.misc}
\usetikzlibrary{shapes.symbols}
\usetikzlibrary{shapes.geometric}

\tikzset{
	ddot/.style={circle,fill=white,draw=black,inner sep=0pt,minimum size=0.8mm},
	>=stealth,
	}

\tikzset{
	ddot2/.style={circle,fill=black,draw=black,inner sep=0pt,minimum size=0.8mm},
	>=stealth,
	}

\newtheorem{theorem}{Theorem} [section]

\newtheorem{lemma}[theorem]{Lemma}
\newtheorem{proposition}[theorem]{Proposition}
\newtheorem{remark}[theorem]{Remark}


\DeclareMathOperator*{\supp}{supp}

%
\newcommand{\1}{\hspace{0.5mm}\text{I}\hspace{0.5mm}}

%

\newcommand{\noi}{\noindent}
\newcommand{\Z}{\mathbb{Z}}
\newcommand{\R}{\mathbb{R}}
\newcommand{\C}{\mathbb{C}}
\newcommand{\T}{\mathbb{T}}

\let\Re=\undefined\DeclareMathOperator*{\Re}{Re}
\let\Im=\undefined\DeclareMathOperator*{\Im}{Im}

\newcommand{\E}{\mathbb{E}}

\def\norm#1{\|#1\|}

\newcommand{\eps}{\varepsilon}

\newcommand{\s}{\sigma}

\newcommand{\dt}{\partial_t}

\newcommand{\Plow}{P_{N}}
\newcommand{\Slow}{S_{N}}

\renewcommand{\l}{\ell}



\newcommand{\N}{\mathbb{N}}

\newtheorem*{ackno}{Acknowledgements}

\numberwithin{equation}{section}
\numberwithin{theorem}{section}

\begin{document}
\baselineskip = 15pt

\title[GWP for the 2-d SCGL Equation]
{Global well-posedness for the two-dimensional stochastic complex Ginzburg-Landau equation}

\author[W. J. Trenberth]
{William J. Trenberth}
\address{
William John Trenberth, School of Mathematics\\
The University of Edinburgh\\
and The Maxwell Institute for the Mathematical Sciences\\
James Clerk Maxwell Building\\
The King's Buildings\\
Peter Guthrie Tait Road\\
Edinburgh\\ 
EH9 3FD\\
 United Kingdom}
\email{}
\subjclass[2010]{35Q56, 60H15}
\keywords{stochastic complex Ginzburg-Landau; 
renormalization; Wick ordering;
Laguerre polynomials; white noise, global well-posedness}

\begin{abstract}
We study the stochastic complex Ginzburg-Landau equation
(SCGL) with an additive space-time white noise forcing
on the two-dimensional torus. This equation is singular and thus we
need to renormalize the nonlinearity in order to give proper meaning to the
equation. Unlike the real-valued stochastic quantization equation,
SCGL is complex valued and hence we are forced to work with the
generalized Laguerre polynomials for the sake of renormalization. In handling nonlinearities of arbitrary degree,
we derive a useful algebraic identity on the renormalization in the
complex-valued setting and prove that the renormalized SCGL is locally well-posed.
We prove global well-posedness using an energy estimate and almost sure global well-posedness under different conditions using an invariant measure argument.
\end{abstract}

\maketitle
\baselineskip = 14pt
\tableofcontents

\section{Introduction}

\subsection{The stochastic complex Ginzburg-Landau equation}

We study
the following stochastic complex Ginzburg-Landau equation on $\T^2 = (\R/2\pi\Z)^2$
with an additive space-time white noise forcing:
\begin{align}
\begin{cases}
\dt u=(a_1+ia_2)\Delta u+(b_1+ib_2)u -(c_1+ic_2)|u|^{2m-2}u+\sqrt{2\gamma}\xi   \\
u|_{t = 0} = u_0 
\end{cases}
\label{SCGL}
\end{align}
\noi
where $(x, t) \in \T^2\times \R_+$, $a_1,\gamma,>0$, $a_2, b_1, b_2, c_2, c_3\in\R$, $m\geq 2$ is an integer and 
$\xi(x, t)$ denotes a complex valued, Gaussian, space-time white noise on $\T^2\times \R_+$. 

For $s > -\frac{2}{2m-1}$, we consider SCGL with initial data in the space $C^s(\T^2)$. Here $C^{s}(\T^2)$ is the Besov-H\"older space of regularity $s$. See Section \ref{Sec: Basic ests} for the definition and some basic properties of these spaces. 

The complex Ginzburg-Landau equation (CGL), namely equation \eqref{SCGL} without the white noise forcing, is one of the most studied equations in physics. CGL is used to describe phenomenon such as nonlinear waves, second order phase transitions, superconductivity, Bose-Einstein condensation and liquid crystals. For more information on complex Ginzburg-Landau equations in physics, see the survey paper \cite{WORLDofCGL}.

Due to its physical importance, CGL has been heavily studied from a mathematical perspective. See for example \cite{DGL} where an energy estimate was used to show that, with twice differentiable initial data, if the ratio $|\frac{a_2}{a_1}|$ is small enough, CGL on $\T^d$ is globally well-posed. See \cite{GV} for a similar result on $\R^d$. 

There has also been a substantial amount of research on complex Ginzburg-Landau equations with random forcing. See for example \cite{Kuksin1997,Hai03,kuksinSher,Kuksin2013,KuksinNer2013,Shir,HIN,Hos,Wei}. We note in particular the work \cite{Hai03} where Hairer studied a complex Ginzburg-Landau equation driven by a real valued space-time white noise in one spatial dimension and the work \cite{HIN} where Hoshino, Inahama and Naganuma studied a complex Ginzburg-Landau equation driven by a complex valued space-time white noise in three spatial dimensions. To the authors knowledge, no work has studied complex Ginzburg-Landau equations driven by a space-time white noise in two spatial dimensions. The current work aims to fill this gap in the literature.

In this paper we will only consider SCGL~\eqref{SCGL}, with $b_1=-a_1$ and $b_2=-a_2$. That is we consider SCGL in the form:
\begin{align}
\dt u=(a_1+ia_2)[\Delta -1]u-(c_1+ic_2)|u|^{2m-2}u+\sqrt{2\gamma}\xi.
\label{SCGLb}
\end{align}
We do this to avoid issues occurring at the zero frequency that arise as $\Delta$ is not a strictly positive operator.  

\subsection{Renormalized SCGL}\label{SUBSEC: Renormalized SCGL}

To explain the renormalization procedure used for this equation, we consider a truncated version of it. Let $B_R$ be the ball of radius $R$ centered at $0$, measured in the Euclidean distance. We denote $\chi_N = \chi(\frac{\cdot}{N})$, where $\chi:\R^2\rightarrow\R$ is a smooth function such that $\chi = 1$ on $B_{\frac{1}{2}}$ and $\chi = 0$ outside of $B_{1}$. Using this smooth truncation,  we define the smooth frequency projector, $\Slow$ by
\begin{equation*}
    \Slow f=\sum\limits_{|n|\leq N}\chi_N(n)\widehat{f}(n)e^{in\cdot x}.
\end{equation*}
Consider the following truncated version of SCGL~\eqref{SCGLb}:
\begin{align}\label{EQU: Trunc SCGL}
\begin{cases}
\dt u_N=(a_1+ia_2)[\Delta-1]u_N-(c_1+ic_2) |u_N|^{2m-2}u_N + \sqrt{2\gamma}\Slow\xi   \\
u_N|_{t = 0} = \Slow u_0.
\end{cases}
\end{align}
It can be shown that for each fixed $N$, this truncated equation is globally well-posed and its solutions are smooth in space-time. We define the truncated stochastic convolution
\begin{align}
    \Psi_N(t)&\stackrel{\text{def}}{=}\sqrt{2\gamma}\int_{-\infty}^t S(t-t')d(\Slow W(t'))\label{conv}\\
    &=\sqrt{2\gamma}\sum\limits_{|n|\leq N}\chi_N(n)e^{i n\cdot x}\int_{-\infty}^te^{-(t-t')(a_1+ia_2)(|n|^2+1)}d\beta_n(t')\nonumber.
\end{align}
Then, using the fact that $\beta_n$ and $\beta_m$ are independent unless $n=m$, and using It\^o's isometry we have,
\begin{align*}
    \sigma_N&=\E\left[|\Psi_N(x,t)|^2 \right]\\
    &=2\gamma\sum\limits_{|n|\leq N}|\chi_N(n)|^2 \int_{-\infty}^te^{-2(t-t')a_1(|n|^2+1)}dt'\\
    &=\sum\limits_{|n|\leq N} |\chi_N(n)|^2 \frac{\gamma}{a_1(|n|^2+1)}\\
    &\sim_{a_1,\gamma} \log N.
\end{align*}
In particular $\sigma_N$ is independent of $(x,t)\in \T^2\times \R_{+}$. It follows that $\Psi_N$ is a Gaussian random variable of mean zero and variance $\sigma_N$. We make the ansatz $u_N=v_N+\Psi_N$ and then study the resulting equation for $v_N$:
\begin{align}\label{EQU: for Trunc v_N SCGL}
\begin{cases}
\dt v_N=(a_1+ia_2)[\Delta -1]v_N-(c_1+ic_2)\hspace{-10pt} \sum\limits_{\substack{0\leq i\leq m\\0\leq j\leq m-1}}\hspace{-10pt}\binom{m}{i}\binom{m-1}{j}v_N^i\overline{v_N}^j\Psi_N^{m-i}\overline{\Psi_N}^{m-j-1}\vspace{-10pt} \\
u_N|_{t = 0} = \Slow u_0-\Psi_ N(0).
\end{cases}
\end{align}
The nonlinearity in the above equation comes from expanding
\begin{equation*}\label{EQU: double binomial}
|x+y|^{2m-2}(x+y)=(x+y)^m \overline{(x+y)^{m-1}}
\end{equation*}
using the binomial theorem twice. This ansatz is one of the main ideas in \cite{DPD} and has come to be known as the \emph{Da Prato-Debussche trick} in the SPDE literature. However, this idea was first used by McKean \cite{McKean} and Bourgain \cite{Bourgain} in the context of PDEs with random initial data.

The equation \eqref{EQU: for Trunc v_N SCGL} still has the problem that the monomials $\Psi_N^k\overline{\Psi_N}^\ell$ do not have good limiting behaviour as $N\rightarrow \infty$. Beleaguered by this lack of convergence, we consider instead the \emph{Wick ordered} truncated monomials defined by\footnote{For the purposes of this paper it is enough to take this as a definition. See \cite{OT1, Nelson} for information on how this relates to Fock spaces.}
\begin{equation}\label{Lag2}
:\!\Psi_N^{k}\overline{\Psi_N}^\ell\!: =\begin{cases}
(-1)^{k}k!L_{k}^{(\ell-k)}(|\Psi_N(x, t)|^2;\sigma_N)\overline{\Psi_N}^{\ell},& k>\ell,\\[10pt]
(-1)^{\ell}\ell!L_{\ell}^{(k-\ell)}(|\Psi_N(x, t)|^2;\sigma_N)\Psi_N^{k},& \ell\leq k.
\end{cases}
\end{equation}
Here $L^{(\ell)}_k(x;\sigma)=\sigma^k L^{(\ell)}_k(\frac{x}{\sigma})$ where $L^{(\ell)}_k(x)$ are the  generalized Laguerre polynomials. These polynomials can be defined using the recursion relation
\begin{equation*}\label{lagrecrel}
L_{k+1}^{(\ell)}(x)=\frac{(2k+1+\ell-x)L_{k}^{(\ell)}(x)-(k+\ell)L^{(\ell)}_{k-1}(x)}{k+1}
\end{equation*}
after initializing
\begin{equation*}
L_0^{(\ell)}(x)=1,\quad L_1^{(\ell)}(x)=1+\ell-x.
\end{equation*}
Alternatively, the generalized Laguerre polynomials can be defined using a generating function:
\begin{equation*}
G_L(t,x,\ell)\frac{1}{(1-t)^{\ell+1}}e^{-\frac{tx}{1-t}}=\sum\limits_{n=0}^\infty t^nL^{(\ell)}_n(x).
\end{equation*}
The first few generalized Laguerre polynomials are:
\begin{align*}
L_0^{(\ell)}(x)&=1\\
L_1^{(\ell)}(x)&=1+\ell-x\\
L_1^{(\ell)}(x)&=\frac{1}{2}x^2-(\ell+2)x+\frac{(\ell+1)(\ell+2)}{2}\\
L_3^{(\ell)}(x)&=\frac{-1}{6}x^3+\frac{(\ell+3)}{2}x^2-\frac{(\ell+2)(\ell+3)}{2}x+\frac{(\ell+1)(\ell+2)(\ell+3)}{6}.
\end{align*}
One can show that, for given $k,\ell\in\mathbb{N}$, the Wick ordered truncated monomial, $:\!\Psi_N^{k}\overline{\Psi_N}^\ell\!:$, converges to a well defined distribution which we denote by $:\!\Psi^k\overline{\Psi}^\ell\!:$. In particular we have the following proposition.

\begin{proposition}\label{PROP:sconv}
Let  $k,\l \in \N$, $T >0$ and $p\ge 1$. 
Then, $\{ :\!\!\Psi^\l_N\overline{\Psi_N}^k \!\!:  \}_{N \in \mathbb{N}}$
is a Cauchy sequence in $L^p(\Omega; C([0,T];C^{-\eps}(\T^2)))$.
Moreover, denoting the limit by  $:\!\Psi^\l\overline{\Psi}^k\!:$,    
we have  $:\!\Psi^\l\overline{\Psi}^k\!:\, \in C([0,T];C^{-\eps}(\T^2))$ almost surely. 
\end{proposition}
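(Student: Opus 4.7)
The plan is to use the standard Wiener-chaos/Kolmogorov scheme, specialized to the complex Gaussian setting where the generalized Laguerre polynomials play the role that the Hermite polynomials play for real Gaussians. First I would fix $M < N$ and write the difference $:\!\Psi_N^\ell \overline{\Psi_N}^k\!: - :\!\Psi_M^\ell \overline{\Psi_M}^k\!:$ by expressing each Wick-ordered monomial through its Fourier/chaos expansion. Writing $\Psi_N(x,t) = \sqrt{2\gamma}\sum_{|n|\le N} \chi_N(n) e^{in\cdot x} g_n(t)$ with $g_n(t)$ the appropriate Gaussian stochastic integral, the formula \eqref{Lag2} and the defining property of the Laguerre polynomials give that $:\!\Psi_N^\ell \overline{\Psi_N}^k\!:$ is a sum, over multi-indices $(n_1,\dots,n_\ell; n_1',\dots,n_k')$ with $|n_i|,|n_j'|\le N$, of Fourier modes $e^{i(\sum n_i - \sum n_j')\cdot x}$ times a product of $g_{n_i}(t)$ and $\overline{g_{n_j'}(t)}$ with no surviving self-contraction: in other words, it is an element of the $(k+\ell)$-th inhomogeneous complex Wiener chaos.

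The heart of the argument is the covariance computation. For fixed $(x,t)$ and $(y,s)$, I would compute
\begin{equation*}
\E\bigl[\,:\!\Psi_N^\ell\overline{\Psi_N}^k\!:\!(x,t)\,\overline{:\!\Psi_N^\ell\overline{\Psi_N}^k\!:\!(y,s)}\,\bigr]
\end{equation*}
by applying Wick's theorem for complex Gaussians: only the pairings that match every $g$ with a $\overline g$ survive, and those are counted exactly by the Laguerre structure. This is precisely the algebraic identity alluded to in the abstract; the net effect is that the covariance equals a combinatorial constant times
\begin{equation*}
\bigl(\E[\Psi_N(x,t)\overline{\Psi_N(y,s)}]\bigr)^{\ell}\bigl(\overline{\E[\Psi_N(x,t)\overline{\Psi_N(y,s)}]}\bigr)^{k},
\end{equation*}
up to conjugation conventions, with the $(M,N)$-difference giving an analogous expression involving $(\chi_N-\chi_M)$ cutoffs. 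Using the explicit heat-type kernel in \eqref{conv}, the two-point function $\E[\Psi_N(x,t)\overline{\Psi_N(y,s)}]$ behaves like $\log\tfrac{1}{|x-y|+|t-s|^{1/2}}$ with an $N$-regularization, and the difference piece gains a small factor as $M,N\to\infty$.

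From this covariance, a direct Fourier-side computation gives, for any $\varepsilon>0$ and some $\theta(\varepsilon)>0$,
\begin{equation*}
\E\bigl[\|\,:\!\Psi_N^\ell\overline{\Psi_N}^k\!: - :\!\Psi_M^\ell\overline{\Psi_M}^k\!:\,\|_{H^{-\varepsilon}(\T^2)}^2\bigr] \lesssim M^{-\theta},
\end{equation*}
and, with a similar computation in $(t,s)$ including a difference in time, the corresponding bound with an extra factor of $|t-s|^{\theta'}$. Because the object lies in a fixed Wiener chaos of order $k+\ell$, Gaussian hypercontractivity (Nelson's estimate) promotes these $L^2(\Omega)$ bounds to $L^p(\Omega)$ bounds with no loss beyond a $p$-dependent constant. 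A Kolmogorov continuity argument, combined with the Besov embedding $W^{\varepsilon/2,p}\hookrightarrow C^{\varepsilon/2}$ in the spatial variable (taking $p$ large) and a standard time-continuity upgrade, then converts the two-parameter moment bounds into the Cauchy property in $L^p(\Omega; C([0,T]; C^{-\varepsilon}(\T^2)))$, yielding both the limit and its stated regularity.

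The main obstacle is the covariance step: one must verify that the Wick-ordering prescribed by \eqref{Lag2} really does kill all partial contractions in the complex setting, so that the two-point function reduces to the clean product of $(k+\ell)$ kernel factors above. This is the combinatorial identity for generalized Laguerre polynomials that replaces the familiar Hermite identity, and it is what makes the renormalization via $L_k^{(\ell)}$ the correct one for a complex Gaussian; once this is in hand, the remaining analytic steps are routine.
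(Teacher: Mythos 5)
Your proposal is correct and follows essentially the same route as the paper: the covariance of the Wick-ordered monomials is reduced, via the Laguerre orthogonality identity (Proposition \ref{orthoglag}, which you correctly identify as the key combinatorial input replacing the Hermite identity), to a power of the two-point function $\E[\Psi_N(x,t)\overline{\Psi_N(y,s)}]$, after which a Fourier-side summation, the Wiener chaos/hypercontractivity estimate, Sobolev embedding with $p$ large, and the Kolmogorov continuity criterion give the Cauchy property in $L^p(\Omega;C([0,T];C^{-\eps}))$. The only cosmetic difference is that you propose to verify the orthogonality identity by Wick's theorem for complex Gaussians, whereas the paper derives it from the Laguerre generating function; both are standard and the rest of your outline matches the paper's argument step for step.
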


We will prove this proposition in Section \ref{SEC: Sto Conv}. One can think of $:\!\Psi^k\overline{\Psi}^\ell\!:$ as being $\Psi^k\overline{\Psi}^\ell$ with infinite counter terms. For example, for $k=2$ and $\ell=1$ we can think of $:|\Psi|^2\Psi:$ as being $|\Psi|^2\Psi-2\infty\Psi$. This heuristic is justified by looking at \eqref{Lag2} and noting that $\sigma_N\rightarrow \infty$ as $N\rightarrow \infty$. 

Consider now the following equation:
\begin{align}\label{EQU: for v SCGL}
\begin{cases}
\dt v=(a_1+ia_2)[\Delta-1]v-(c_1+ic_2)\hspace{-10pt} \sum\limits_{\substack{0\leq i\leq m\\0\leq j\leq m-1}}\hspace{-10pt}\binom{m}{i}\binom{m-1}{j}v^i\overline{v}^j:\!\Psi^{m-i}\overline{\Psi}^{m-j-1}\!:  \\[5pt]
v|_{t = 0} = u_0-\Psi(0).
\end{cases}
\end{align}
This is an untruncated version of \eqref{EQU: for Trunc v_N SCGL} with $\Psi^{m-i}\overline{\Psi}^{m-j-1}$ replaced by $:\!\Psi^{m-i}\overline{\Psi}^{m-j-1}\!:$. The point here is that although $\Psi^{m-i}\overline{\Psi}^{m-j-1}$ is not well defined, $:\!\Psi^{m-i}\overline{\Psi}^{m-j-1}\!:$ is by Proposition \ref{PROP:sconv}.

If a solution $v$ exists to this equation we \emph{define} 
\begin{align*}
    u\stackrel{\text{def}}{=} v+\Psi
\end{align*}
to be a solution of the \emph{Wick ordered} SCGL (WSCGL), 
\begin{align}\label{EQU: renormalised equ for u}
\begin{cases}
\dt u=(a_1+ia_2)[\Delta-1] u -(c_1+ic_2):\!|u|^{2m-2}u\!:+\sqrt{2\gamma}\xi  \\
u_N|_{t = 0} = u_0
\end{cases}
\end{align}
where $:\!|u|^{2m-2}u\!: = (-1)^{m-1}(m-1)!L^{(1)}_{m-1}(|u|^2;\infty)$ is the \emph{Wick ordered nonlinearity}. This perhaps seems unusual because we are defining solutions to WSCGL~\eqref{EQU: renormalised equ for u} through another equation, \eqref{EQU: for v SCGL}. We do this because WSCGL only makes sense formally. WSCGL~\eqref{EQU: renormalised equ for u} is an abuse of notation because  $L^{(1)}_{m-1}(x;\infty)$ makes no sense and is an abuse of definitions because, Wick ordering is only defined for Gaussian random variables, see \cite{Nelson} for more information, and there is no reason for $u$ to be a Gaussian random variable. As the nonlinearity in WSCGL~\eqref{EQU: renormalised equ for u} does not have any rigorous meaning, WSCGL~\eqref{EQU: renormalised equ for u} itself also does not have any rigorous meaning. However, morally, WSCGL~\eqref{EQU: renormalised equ for u} is the renormalized equation that we are trying to solve in this paper. 

This definition of $u$ solving WSCGL~\eqref{EQU: renormalised equ for u} is of such importance to this paper that it is worth stating in a reverse manner for extra clarity: $u$ solves WSCGL~\eqref{EQU: renormalised equ for u} if $v = u-\Psi$ solves \eqref{EQU: for v SCGL}.

The connection between \eqref{EQU: for v SCGL} and WSCGL~\eqref{EQU: renormalised equ for u} can be understood by looking at truncated versions of these equations.
Looking at the following truncated version of \eqref{EQU: for v SCGL}:
\begin{align}\label{EQU: trunc renorm v SCGL}
\begin{cases}
\dt v_N=(a_1+ia_2)[\Delta-1] v_N- (c_1+ic_2)\hspace{-10pt}\sum\limits_{\substack{0\leq i\leq m\\0\leq j\leq m-1}}\hspace{-10pt}\binom{m}{i}\binom{m-1}{j}v_N^i\overline{v_N^j}:\!\Psi^{m-i}_N\overline{\Psi_N}^{m-j-1}\!:  \\[5pt]
v|_{t = 0} = \Slow u_0-\Psi_N(0)
\end{cases}
\end{align}
and using the following generalized Laguerre polynomial sum formula, see Section \ref{SEC: Lag forms},
\begin{align}\label{EQU: Lag sum form}
(-1)^{m}m!L_{m}^{(1)}(|x+y|^2;\sigma)(x+y)=\sum\limits_{\substack{0\leq i\leq m+1\\0\leq j\leq m}} \binom{m+1}{i}\binom{m}{j}P^{m,\sigma}_{i,j}(y,\overline{y})x^i\overline{x}^j
\end{align}
where
\begin{equation*}
P_{i,j}^{m,\sigma}(y,\overline{y})=\begin{cases}
(-1)^{m-j}(m-j)!L^{(j-i+1)}_{m-j}(|y|^2;\sigma)y^{j-i+1},& j+1\geq i\\
\\
(-1)^{m-i+1}(m-i+1)!L^{(i-j-l)}_{m-i+1}(|y|^2;\sigma)\overline{y}^{i-j-l},&j+1\leq i.
\end{cases}
\end{equation*}
it follows that $u_N=v_N+\Psi_N$ satisfies the following equation:
\begin{align}\label{EQU: renormalised equ for u_N}
\begin{cases}
\dt u_N=(a_1+ia_2)[\Delta-1] u_N -(c_1+ic_2)(-1)^{m-1}(m-1)!L^{(1)}_{m-1}(|u_N|^2;\sigma_N) u_N+\sqrt{2\gamma}\Slow\xi  \\
u_N|_{t = 0} = \Slow u_0.
\end{cases}
\end{align}
The Laguerre polynomial sum formula \eqref{EQU: Lag sum form} intermediates \eqref{EQU: renormalised equ for u_N} and \eqref{EQU: trunc renorm v SCGL}. Formally taking a limit as $N\rightarrow \infty$, the relationship, $u_N=v_N+\Psi_N$, between the truncated equations \eqref{EQU: renormalised equ for u_N} and \eqref{EQU: trunc renorm v SCGL} gives justification for defining solutions to the purely formal \eqref{EQU: renormalised equ for u} through \eqref{EQU: for v SCGL}. To the authors knowledge the sum formula \eqref{EQU: Lag sum form} has not appeared in the literature. An elementary proof is given in Section \ref{SEC: Lag forms} of this paper.

\begin{remark}
\textnormal{An alternative way to define solutions to WSCGL~\eqref{EQU: renormalised equ for u} would be the following: $u$ solves WSCGL~\eqref{EQU: renormalised equ for u} if the sequence $\{u_N\}_{N\in\mathbb{N}}$, where $u_N$ solves \eqref{EQU: renormalised equ for u_N}, converges in probability to $u$, measured in the space $C([0,T], C^{s}(\T^2))$.  It turns out that these definitions are equivalent. This is the case for practically all singular SPDEs, see for example \cite[Remark 1.2]{GKO}}.
\end{remark}

\subsection{Main results}
Our main goal in this paper is to study the well-posedness of WSCGL~\eqref{EQU: renormalised equ for u} defined in the previous section. We will prove the following.

\begin{theorem}\label{LWP}
Let $a_1>0$, $m\geq 2$ be an integer, let $s_0>-\frac{2}{2m-1}$ and $\eps>0$ be sufficiently small. Then WSCGL~\eqref{EQU: renormalised equ for u} is pathwise locally well-posed in $C^{s_0}(\T^2)$. More precisely there exists $\theta>0$ such that given any $u_0\in C^{s_0}(\T^2)$, there exists $T\sim_\omega \norm{u_0}^{-\theta}_{C^{s_0}(\T^2)}$, which is positive almost surely, such that there is a unique solution to the mild formulation of \eqref{EQU: for v SCGL} on $[0,T]$ with 
\begin{equation*}
v\in C((0,T];C^{2\eps}(\T^2))\cap C([0,T];C^{s_0}(\T^2)) .
\end{equation*}
\end{theorem}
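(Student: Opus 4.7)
The plan is to prove local well-posedness by a Banach fixed-point argument on the mild formulation of \eqref{EQU: for v SCGL}. Writing $\alpha = a_1 + ia_2$ (so $\Re \alpha = a_1 > 0$) and setting $S(t) := e^{t\alpha(\Delta-1)}$, the fixed-point map is
\begin{align*}
\Phi(v)(t) := S(t)(u_0 - \Psi(0)) - (c_1+ic_2)\!\!\!\sum_{\substack{0 \le i \le m\\ 0 \le j \le m-1}}\!\!\!\binom{m}{i}\binom{m-1}{j}\!\int_0^t\! S(t-t')\bigl[v^i\overline v^j :\!\Psi^{m-i}\overline\Psi^{m-j-1}\!:\bigr]\!(t')\, dt'.
\end{align*}
Because $\Re \alpha > 0$, the operator $S(t)$ satisfies the usual parabolic Schauder-type smoothing $\|S(t)f\|_{C^\beta(\T^2)} \lesssim t^{-(\beta-s)/2}\|f\|_{C^s(\T^2)}$ for $\beta \geq s$, together with uniform boundedness on each $C^s(\T^2)$. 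By Proposition \ref{PROP:sconv} there is an event of full probability on which every stochastic monomial $:\!\Psi^k\overline\Psi^\l\!:$ with $k + \l \le 2m-1$ (and also $\Psi(0)$) is bounded in $C([0,T]; C^{-\eps}(\T^2))$ by a finite random constant $M = M(\omega,T)$; the entire contraction argument is then carried out pathwise on this event.

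I would seek a fixed point in a closed ball of a resolution space of the form
\begin{align*}
X_T := \Bigl\{ v \in C([0,T]; C^{s_1}(\T^2)) \cap C((0,T]; C^{2\eps}(\T^2)) : \|v\|_{X_T} < \infty \Bigr\},
\end{align*}
with $s_1 := \min(s_0, -\eps)$ and the weighted norm
\begin{align*}
\|v\|_{X_T} := \sup_{0\le t \le T}\|v(t)\|_{C^{s_1}} + \sup_{0 < t \le T} t^{\theta}\|v(t)\|_{C^{2\eps}}, \qquad \theta := \tfrac{2\eps - s_1}{2} > 0.
\end{align*}
The linear piece $S(\cdot)(u_0 - \Psi(0))$ belongs to $X_T$ with norm $\lesssim \|u_0\|_{C^{s_0}} + M$ by the two Schauder estimates. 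For the Duhamel term, since $2\eps > 0$ the space $C^{2\eps}(\T^2)$ is a multiplicative algebra, and the Besov--H\"older product estimate $\|fg\|_{C^{-\eps}} \lesssim \|f\|_{C^{2\eps}}\|g\|_{C^{-\eps}}$ (which uses $2\eps > \eps$) gives
\begin{align*}
\bigl\| v^i\overline v^j :\!\Psi^{m-i}\overline\Psi^{m-j-1}\!:\!(t')\bigr\|_{C^{-\eps}} \lesssim (t')^{-(i+j)\theta} \|v\|_{X_T}^{i+j}\, M,
\end{align*}
while applying $S(t-t')$ to promote the output from $C^{-\eps}$ to $C^{2\eps}$ costs a factor $(t-t')^{-3\eps/2}$. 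Hence, for the worst-degree term $i+j = 2m-1$, the relevant time integral becomes
\begin{align*}
\int_0^t (t-t')^{-3\eps/2}(t')^{-(2m-1)\theta}\, dt' \lesssim t^{\,1 - 3\eps/2 - (2m-1)\theta},
\end{align*}
which is a Beta integral; it converges (and all lower-order versions do) iff $(2m-1)\theta < 1$, i.e.\ $2\eps - s_1 < \tfrac{2}{2m-1}$, and as $\eps \to 0^+$ this is precisely the hypothesis $s_0 > -\tfrac{2}{2m-1}$.

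Combining these bounds produces $\|\Phi(v)\|_{X_T} \le C(\|u_0\|_{C^{s_0}} + M) + C T^\kappa (M + \|v\|_{X_T})^{2m-1}$ for some $\kappa > 0$, with an analogous Lipschitz bound on $\Phi(v_1) - \Phi(v_2)$ obtained by telescoping the polynomial nonlinearity and using the same product/Schauder estimates. Choosing $R \sim \|u_0\|_{C^{s_0}} + M$ and then $T \sim_\omega (R + M)^{-\theta'}$ for a suitable $\theta' > 0$ makes $\Phi$ a $\tfrac{1}{2}$-contraction on the ball $B_R \subset X_T$; its unique fixed point is the desired mild solution of \eqref{EQU: for v SCGL}, and the existence time is of the claimed form $T \sim_\omega \|u_0\|_{C^{s_0}}^{-\theta}$, positive almost surely since $M < \infty$ a.s.\ by Proposition \ref{PROP:sconv}. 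The main obstacle -- and the unique source of the sharp threshold $s_0 > -\tfrac{2}{2m-1}$ -- is the time-integrability balance displayed above in the top-degree term $i+j=2m-1$: one must absorb simultaneously the Schauder singularity $(t-t')^{-3\eps/2}$ from the semigroup and the initial-layer singularity $(t')^{-(2m-1)\theta}$ coming from the blow-up rate of $\|v(t')\|_{C^{2\eps}}$ as $t' \to 0^+$. The remaining ingredients -- the algebra estimate for $v^i\overline v^j$, the paraproduct estimate against $:\!\Psi^{m-i}\overline\Psi^{m-j-1}\!:$, and the Lipschitz step for the contraction -- follow the standard Da Prato--Debussche template \cite{DPD}.
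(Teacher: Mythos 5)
Your proposal is correct and follows essentially the same route as the paper: a Da Prato--Debussche contraction argument in a time-weighted space built from $\sup_t \norm{v(t)}_{C^{s_0}}$ and $\sup_t t^{(2\eps-s_0)/2}\norm{v(t)}_{C^{2\eps}}$, using the product estimate $\norm{fg}_{C^{-\eps}}\lesssim\norm{f}_{C^{-\eps}}\norm{g}_{C^{2\eps}}$, the parabolic Schauder smoothing of $S(t)$, and the Beta-integral balance that yields exactly the threshold $s_0>-\tfrac{2}{2m-1}$. The only (inessential) organizational difference is that the paper first proves a deterministic local well-posedness statement for arbitrary forcing $\vec z\in\widehat{C}^{-\eps}_T$ and then inserts the Wick monomials via Proposition \ref{PROP:sconv}, whereas you carry out the same estimates pathwise on the full-measure event where the monomials are bounded.
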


Using an energy estimate we are then able upgrade this local well-posedness result to deterministic\footnote{Here, and in the rest of this paper, by deterministic global well-posedness we mean deterministic with respect to the initial data. That is, global well-posedness for all initial data as opposed to global well-posedness for almost all initial data.} global well-posedness provided that the dispersion, $a_2$, is small compared to the dissipation, $a_1$, and that the heat part of the nonlinearity is defocusing, $c_1>0$.

\begin{theorem}\label{GWP}
Let $a_1,c_1>0$ and $s_0>-\frac{2}{2m-1}$. Set $r=\big|\frac{a_1}{a_2}\big|$ and let $m\geq 2$ be an integer such that
\begin{equation*}
    2m-1< 2+2\left(r^2+2r\sqrt{1+r^2}\right).
\end{equation*}
Then WSCGL, \eqref{EQU: renormalised equ for u} is pathwise globally well-posed in $C^{s_0}(\T^2)$. More precisely for any $T>0$, and any $u_0\in C^{s_0}(\T^2)$, there almost surely exists a unique solution to the mild formulation of \eqref{EQU: for v SCGL} on $[0,T]$ with 
\begin{equation*}
v\in C((0,T];C^{2\eps}(\T^2))\cap C([0,T];C^{s_0}(\T^2))\quad
\end{equation*}
almost surely, for $\eps>0$ small enough.
\end{theorem}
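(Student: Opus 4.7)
The plan is to extend the local solution from Theorem \ref{LWP} to any finite time $T$ by means of a pathwise a priori estimate, obtained by an energy method applied to the smooth truncated equation \eqref{EQU: trunc renorm v SCGL}. I work with $v_N$ and establish bounds that are uniform in $N$; the convergence $:\!\Psi_N^k\overline{\Psi_N}^\ell\!:\, \to\, :\!\Psi^k\overline{\Psi}^\ell\!:$ from Proposition \ref{PROP:sconv} then lets me pass to the limit to recover a pathwise global solution of \eqref{EQU: for v SCGL}. Since Theorem \ref{LWP} is equipped with a blow-up alternative, it suffices to rule out finite-time divergence of $\|v_N(t)\|_{C^{s_0}}$ on $[0,T]$ almost surely, uniformly in $N$.

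The driving estimate is a nonlinear $L^q$ energy inequality for an exponent $q$ chosen at the end. Pair \eqref{EQU: trunc renorm v SCGL} with $|v_N|^{q-2}\overline{v_N}$, integrate in space, and take real parts. The linear part contributes
\[-a_1\int_{\T^2}|v_N|^{q-2}\big(|\nabla v_N|^2+|v_N|^2\big)\,dx\]
plus dispersive cross terms proportional to $a_2$ coupling $\nabla|v_N|$ and $|v_N|\nabla\!\arg(v_N)$, while the diagonal index $(i,j)=(m,m-1)$ in the nonlinear sum produces the defocusing contribution $-c_1\int|v_N|^{q+2m-2}\,dx$ (any imaginary companion from $c_2$ is killed by the real part). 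Following the strategy of Doering-Gibbon-Levermore \cite{DGL}, I arrange all of these terms as a quadratic form in $\nabla|v_N|$ and $|v_N|\nabla\!\arg(v_N)$; positivity of this form for a judiciously tuned $q$ is equivalent to the threshold $2m-1<2(1+r^2+2r\sqrt{1+r^2})$, which is exactly the hypothesis of the theorem. This is the deterministic heart of the proof and the place where the quantitative constraint on $m$ and $r$ appears.

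The essential new ingredient compared with the deterministic CGL setting is the treatment of the off-diagonal nonlinear terms $v_N^i\overline{v_N}^j:\!\Psi_N^{m-i}\overline{\Psi_N}^{m-j-1}\!:$ with $(i,j)\neq(m,m-1)$, for which the total $v_N$-degree after pairing with $|v_N|^{q-2}\overline{v_N}$ is strictly below $q+2m-2$ while the stochastic factor has only regularity $-\eps$. The smoothing built into Theorem \ref{LWP} supplies $v_N\in C^{2\eps}(\T^2)$ for $t>0$, and $C^{2\eps}$ is a multiplicative algebra for $\eps>0$ small, so duality in $C^{\eps}\times C^{-\eps}$ yields
\[\bigg|\int_{\T^2}v_N^{i+1}\overline{v_N}^{\,j}:\!\Psi_N^{m-i}\overline{\Psi_N}^{m-j-1}\!:dx\bigg|\les \|v_N\|_{C^{2\eps}}^{i+j+1}\,\|:\!\Psi_N^{m-i}\overline{\Psi_N}^{m-j-1}\!:\|_{C^{-\eps}}.\]
Interpolating $C^{2\eps}$ between $L^{q+2m-2}$ and a slightly higher Sobolev norm controlled by the $\nabla|v_N|$ dissipation, Young's inequality absorbs a controlled fraction of the $v_N$ factors into the good term $c_1\int|v_N|^{q+2m-2}\,dx$, leaving a time-integrable remainder depending polynomially on the almost surely finite norms $\|:\!\Psi^k\overline{\Psi}^\ell\!:\|_{C([0,T];C^{-\eps})}$ supplied by Proposition \ref{PROP:sconv}.

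Integrating the resulting differential inequality gives a pathwise Gr\"onwall bound on $\|v_N(t)\|_{L^q}$ together with $\int_0^T\|v_N\|_{L^{q+2m-2}}^{q+2m-2}\,dt$, uniform in $N$ on $[0,T]$. Feeding this into the mild formulation of \eqref{EQU: trunc renorm v SCGL} and exploiting the Schauder smoothing of the semigroup $e^{t(a_1+ia_2)(\Delta-1)}$ upgrades the Lebesgue bound to one on $\|v_N(t)\|_{C^{s_0}}$, which rules out finite-time blow-up; a fixed-point stability argument analogous to the one behind Theorem \ref{LWP} then passes $N\to\infty$ and produces the claimed global solution. The main obstacle I anticipate is the sharp $L^q$ energy computation in the second step: one must track the $a_2$-dispersive cross terms precisely enough for the DGL threshold $r^2+2r\sqrt{1+r^2}$ to emerge, and it is that computation which forces the quantitative restriction on $m$; once the correct exponent $q$ and interpolation are in place, the stochastic off-diagonal terms submit to essentially routine Young's inequality arguments.
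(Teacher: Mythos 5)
Your proposal follows essentially the same route as the paper: an $L^q$ energy estimate obtained by testing the truncated equation against $|v_N|^{q-2}\overline{v_N}$, the Doering--Gibbon--Levermore quadratic-form argument in $\nabla|v_N|^2$ and $\Im(\overline{v_N}\nabla v_N)$ to absorb the dispersive cross terms (yielding the range $2<q<2+2(r^2+2r\sqrt{1+r^2})$), Besov duality against $C^{-\eps}$ plus interpolation with the gradient dissipation and Young's inequality for the off-diagonal stochastic terms, and finally the $L^p$-based local theory (which requires $q>2m-1$, whence the overlap condition on $m$ and $r$) together with continuous dependence to pass $N\to\infty$. This matches the paper's Propositions 6.1--6.4; the only cosmetic difference is that the paper runs the duality step through $\| |v|^{p+2m-4}v\|_{B^{\eps}_{1,1}}$ and the estimate $\|f\|_{B^{\eps}_{1,1}}\lesssim\|f\|_{L^1}^{1-\eps}\|\nabla f\|_{L^1}^{\eps}+\|f\|_{L^1}$ rather than through $\|v_N\|_{C^{2\eps}}$, which is the precise mechanism your ``interpolation with the dissipation'' sentence gestures at.
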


The above result leaves global well-posedness open for small dissipation-dispersion ratios, $r=|\frac{a_1}{a_2}|$. We extend it to small values of $r$ using an invariant measure argument. 

Intuitively the measure
\begin{equation}\label{EQU: inv meas}
    dP_{a_1,c_1,\gamma,2m}=e^{-\frac{c_1}{2m\gamma}\int :|u|^{2m}:\,dx-\frac{a_1}{2\gamma}\int |\nabla u|^2\,dx-\frac{a_1}{2\gamma}\int|u|^2\,dx}\,du
\end{equation}
should be invariant under the flow of \eqref{EQU: renormalised equ for u} if $\frac{a_2}{a_1}=\frac{c_2}{c_1}$. This is because under this condition the measure \eqref{EQU: inv meas} is expected to be an invariant measure for the heat equation part of \eqref{EQU: renormalised equ for u} and the Schr\"odinger part separately. For a rigorous definition of what is meant by ``invariant measure'' see Section 7. However, as it is written, this measure does not make any sense. In particular the $du$ in \eqref{EQU: inv meas} refers to the non existent Lebesgue measure on some infinite dimensional vector space. Well known results in constructive quantum field theory show that one can rigorously make sense of this measure as a weighted Gaussian measure, see for example \cite{Nelson, OT1}. We review the needed results in Section 7.

Using an invariant measure argument we are able to, in an almost all sense, upgrade Theorem \ref{GWP} to a wider parameter range.

\begin{theorem}\label{Th: AS GWP}
Suppose $\frac{a_2}{a_1}=\frac{c_2}{c_1}$. Then there exists a set, $U\subset \supp P_{a_1,c_1,\gamma,2m}$, of $P_{a_1,c_1,\gamma,2m}$-measure 1 such that, for each $T>0$ and $u_0\in U$, there exists a solution $v$ to \eqref{EQU: for v SCGL} on $[0,T]$, with initial data $u_0$.
\end{theorem}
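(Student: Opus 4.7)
The plan is to combine the local well-posedness of Theorem \ref{LWP} with the (approximate) invariance of $P_{a_1,c_1,\gamma,2m}$ under the flow of WSCGL via a Bourgain-type iteration. All real work is carried out at the truncated level of \eqref{EQU: renormalised equ for u_N}, with an associated truncated Gibbs measure $P_{a_1,c_1,\gamma,2m,N}$ whose density against a Gaussian reference measure is explicit and whose $L^p$ norms are uniformly bounded in $N$ (a fact to be collected in Section~7).

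First, I would establish that $P_{a_1,c_1,\gamma,2m,N}$ is invariant under the flow of \eqref{EQU: renormalised equ for u_N}. Under the hypothesis $a_2/a_1 = c_2/c_1$, the deterministic drift of \eqref{EQU: renormalised equ for u_N} factors as
\begin{equation*}
(1 + i\tfrac{a_2}{a_1})\bigl[a_1(\Delta - 1)u - c_1(-1)^{m-1}(m-1)!\,L^{(1)}_{m-1}(|u|^2;\sigma_N)\,u\bigr].
\end{equation*}
The real factor is the (Wick-regularized) $L^2$-gradient of the Hamiltonian appearing in the exponent of \eqref{EQU: inv meas}, so it acts as a dissipative drift preserving $P_{a_1,c_1,\gamma,2m,N}$ together with the noise $\sqrt{2\gamma}S_N\xi$; the imaginary factor rotates this vector field by a fixed angle and produces a Hamiltonian-type flow that preserves any functional of the Hamiltonian, hence also the measure. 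Because the frequencies $|n| > N$ are decoupled and evolve linearly, their contribution to the measure is a Gaussian that is trivially invariant under the linearized flow. A Fokker--Planck computation on the low-frequency finite-dimensional SDE then yields the full invariance.

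Second, using invariance, I would run Bourgain's globalization argument. The $L^p$-integrable density and Fernique-type estimates for the Gaussian reference measure give sub-Gaussian tails
\begin{equation*}
P_{a_1,c_1,\gamma,2m,N}\bigl(\|\phi\|_{C^{s_0}(\T^2)} > R\bigr) \le C e^{-c R^{\alpha}},
\end{equation*}
uniform in $N$. Fix $T > 0$ and $R > 0$, and partition $[0,T]$ into intervals of length $\tau \sim R^{-\theta}$ dictated by Theorem \ref{LWP}. By invariance and a union bound,
\begin{equation*}
P_{a_1,c_1,\gamma,2m,N}\Bigl(\sup_{t \in [0,T]} \|v_N(t)\|_{C^{s_0}} > R\Bigr) \lesssim T\,R^{\theta}\, e^{-c R^{\alpha}}.
\end{equation*}
Choosing $R = R_j \to \infty$ and applying Borel--Cantelli yields a set $U_{N,T}$ of $P_{a_1,c_1,\gamma,2m,N}$-measure arbitrarily close to one on which $v_N$ extends to $[0,T]$ with a quantitative $C^{s_0}$-bound that is uniform in $N$.

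Finally, I would pass to the limit $N \to \infty$. Using Proposition \ref{PROP:sconv} together with a stability estimate for the fixed-point problem underlying \eqref{EQU: trunc renorm v SCGL} and \eqref{EQU: for v SCGL}, one shows that on the good events $v_N \to v$ in $C([0,T];C^{s_0}(\T^2))$. Combined with the weak convergence $P_{a_1,c_1,\gamma,2m,N} \to P_{a_1,c_1,\gamma,2m}$ reviewed in Section~7, this descends the uniform bound to the limit and produces a set $U = \bigcap_{T, j} U_{T,j} \subset \supp P_{a_1,c_1,\gamma,2m}$ of full measure on which \eqref{EQU: for v SCGL} admits a global solution. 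The main obstacle I expect is the invariance step: one must match the frequency truncation $S_N$ and the Wick counterterm $\sigma_N$ precisely so that the Galerkin-truncated measure is genuinely invariant under the Galerkin-truncated flow, which is delicate because the Wick-ordered nonlinearity interacts non-trivially with the smooth cutoff $\chi_N$ in the low-frequency modes.
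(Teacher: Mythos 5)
Your overall strategy (truncate, prove invariance of the truncated Gibbs measure, globalize, pass to the limit) matches the paper's skeleton, and your invariance step is essentially the paper's: the hypothesis $a_2/a_1=c_2/c_1$ makes the drift a complex multiple of the $L^2$-gradient of the truncated Hamiltonian, the high frequencies decouple into a linear Ornstein--Uhlenbeck system whose Gaussian marginal is stationary, and a generator/integration-by-parts computation (using $\mathrm{Tr}(DJD)=0$ for the rotated part) gives invariance of the low-frequency marginal. Where you genuinely diverge is the globalization mechanism. You run the classical Bourgain iteration: sub-Gaussian tails for the Gibbs measure, a partition of $[0,T]$ into $\sim TR^{\theta}$ intervals, a union bound, and Borel--Cantelli. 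The paper instead uses the Da Prato--Debussche variant: it bounds the \emph{first moment}
\begin{equation*}
\int_{C^{-\eps}}\E\Big[\sup_{t\in[0,T]}\norm{\Phi_N(t,u_0,\omega)}_{C^{-\eps}}\Big]\,dP^N_{a_1,c_1,\gamma,2m}\leq C_T
\end{equation*}
directly from the mild formulation, where invariance enters only ``softly'' to rewrite $\int_0^T\int\E\big[\norm{\mathcal{N}(u_N(t'))}_{C^{-\eps}}\big]\,dP^N\,dt'$ as $T$ times a time-zero expectation, which is then controlled uniformly in $N$ by Cauchy--Schwarz against the uniformly $L^2(\mu)$-bounded density $G^N$ and the moment bounds for the Wick powers. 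Fatou's lemma and the convergence $v_N\to v$ then transfer the bound to the untruncated flow, and a.s.\ finiteness of the sup plus the local theory forces $T^*\geq T_k$ on a full-measure set. The paper's route avoids the interval-by-interval bookkeeping and quantitative tail estimates entirely; yours, if completed, would yield sharper quantitative growth bounds on the good sets.

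There is one step in your sketch that is incomplete as written. By Theorem \ref{LWP} the local existence time satisfies $T\sim_{\omega}\norm{u_0}^{-\theta}_{C^{s_0}}$, with the $\omega$-dependence entering through $\norm{\vec{z}}_{\widehat{C}^{-\eps}_T}$, i.e.\ through the Wick-ordered stochastic objects, not only through the initial data. Your union bound
\begin{equation*}
P_{a_1,c_1,\gamma,2m,N}\Big(\sup_{t\in[0,T]}\norm{v_N(t)}_{C^{s_0}}>R\Big)\lesssim T R^{\theta}e^{-cR^{\alpha}}
\end{equation*}
is therefore stated on the wrong space: the event lives on the product of the initial-data space and $\Omega$, and on each subinterval $[j\tau,(j+1)\tau]$ you must also control the tails of $\norm{:\!\Psi^k\overline{\Psi}^{\l}\!:}_{C([j\tau,(j+1)\tau];C^{-\eps})}$, since otherwise the interval length $\tau\sim R^{-\theta}$ is not guaranteed. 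This is fixable --- the Wiener chaos estimate of Proposition \ref{PROP: Weiner chaos} yields tails of order $e^{-cR^{2/(k+\l)}}$ for these objects, and one intersects with the corresponding good events --- but it must be said, and it changes the exponent $\alpha$ in your final estimate to the worst of the Gaussian and chaos tails. With that repair, and with the stability statement of Proposition \ref{PROP: vN approximates v} to justify $v_N\to v$ on the good events, your argument goes through.
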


As a consequence of the arguments used to prove Theorem \ref{Th: AS GWP}, we can also prove the following.

\begin{theorem}\label{THM: invariance}
The measure $P_{a_1,c_1,\gamma,2m}$ is an invariant measure for \eqref{EQU: renormalised equ for u}.
\end{theorem}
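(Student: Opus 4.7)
The plan is a Bourgain-type limiting argument descending from the finite-dimensional truncation \eqref{EQU: renormalised equ for u_N}. First I would establish invariance at the truncated level. Decomposing the phase space as $E_N \oplus E_N^\perp$ with $E_N = \mathrm{span}\{e^{in\cdot x} : |n| \le N\}$, the truncated equation splits into a finite-dimensional SDE on $E_N$ coupled to an independent linear Ornstein--Uhlenbeck evolution on $E_N^\perp$; the Gaussian marginal of $P_{a_1,c_1,\gamma,2m}$ on $E_N^\perp$ is plainly preserved by the latter. On $E_N$ the hypothesis $a_2/a_1 = c_2/c_1 =: \lambda$ lets one write the drift as $-\nabla_{\bar u} H_N - i\lambda \nabla_{\bar u} H_N$, where $H_N$ is the truncated Hamiltonian whose density (with respect to the Gaussian reference measure) is $e^{-H_N/\gamma}$. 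The gradient piece combined with the noise $\sqrt{2\gamma}\Slow\xi$ is a Langevin system preserving this density by a direct Fokker--Planck computation, while the second piece is a Hamiltonian vector field in the underlying complex structure, hence divergence free and conservative for $H_N$, so preserves the same density. Summing, the truncated Gibbs measure $P_N$ is invariant under the truncated Markov semigroup.

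Next I would pass to the limit. Standard constructive-field-theoretic arguments recalled in Section 7 give weak convergence $P_N \rightharpoonup P_{a_1,c_1,\gamma,2m}$, and the proof of Theorem \ref{Th: AS GWP} yields, for $P$-almost every $u_0$, convergence $u_N(t;u_0) \to u(t;u_0)$ in $C^{s_0}(\T^2)$ almost surely. Testing the truncated invariance against $F \in C_b(C^{s_0}(\T^2))$,
\[
\int \E[F(u_N(t;u_0))]\,dP_N(u_0) = \int F(u_0)\,dP_N(u_0).
\]
The right side converges to $\int F\,dP_{a_1,c_1,\gamma,2m}$ by weak convergence; once the left side is shown to converge to $\int \E[F(u(t;u_0))]\,dP_{a_1,c_1,\gamma,2m}(u_0)$, invariance follows by density of $C_b$.

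The main obstacle is this latter convergence, because the initial datum is sampled from the moving measure $P_N$ rather than from $P$. I would handle it by introducing a good set $G_{N,R,t} \subset C^{s_0}(\T^2)$ of initial data for which the truncated solution is $C^{s_0}$-bounded by $R$ on $[0,t]$. Using invariance of $P_N$ already proved, Chebyshev's inequality applied to a Lyapunov functional of the truncated flow, and the uniform-in-$N$ moment bounds coming from the arguments behind Proposition \ref{PROP:sconv}, one obtains $\limsup_N P_N(G_{N,R,t}^c) \to 0$ as $R \to \infty$. On $G_{N,R,t}$, the pointwise convergence $u_N \to u$ together with the weak convergence $P_N \rightharpoonup P_{a_1,c_1,\gamma,2m}$ is enough to pass to the limit by a standard truncation-and-continuity argument, identifying the push-forward of $P_N$ under the truncated flow with the push-forward of $P_{a_1,c_1,\gamma,2m}$ under the true flow and completing the proof.
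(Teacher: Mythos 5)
Your first step --- proving invariance of the truncated measure by splitting into the finite-dimensional system on $E_N$ and the linear Ornstein--Uhlenbeck evolution on the high frequencies, and on $E_N$ decomposing the drift under $\frac{a_2}{a_1}=\frac{c_2}{c_1}$ into a Langevin gradient part plus a divergence-free Hamiltonian part --- is exactly the paper's argument (carried out there as a generator computation, with the Hamiltonian contribution vanishing because $\textnormal{Tr}(DJD)=0$). That part is sound.

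The gap is in the limiting step. You reduce matters to showing
\[
\int \E\big[F(u_N(t;u_0))\big]\,dP_N(u_0)\;\longrightarrow\;\int \E\big[F(u(t;u_0))\big]\,dP_{a_1,c_1,\gamma,2m}(u_0),
\]
and propose to deduce this from the pointwise $P$-a.e.\ convergence of $f_N(u_0):=\E[F(u_N(t;u_0))]$ to $f(u_0)$ together with the weak convergence $P_N\rightharpoonup P_{a_1,c_1,\gamma,2m}$, localized to a good set $G_{N,R,t}$. But pointwise a.e.\ convergence of the integrands combined with weak convergence of the measures does not give convergence of $\int f_N\,dP_N$: one needs uniformity in $u_0$ (uniform convergence or equicontinuity of $\{f_N\}$ on the good set), and your sketch never supplies it --- the convergence $u_N(t;u_0)\to u(t;u_0)$ from Proposition \ref{PROP: vN approximates v} is for each fixed $u_0$ with no rate, so the ``standard truncation-and-continuity argument'' you invoke is precisely the step that requires an idea. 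The paper avoids this issue entirely by using that $P^N_{a_1,c_1,\gamma,2m}$ and $P_{a_1,c_1,\gamma,2m}$ are absolutely continuous with respect to the \emph{same} Gaussian $\mu_{\frac{a_1}{\gamma}}$, with densities $(Z^N)^{-1}G^N_{\frac{c_1}{\gamma},2m}\to Z^{-1}G_{\frac{c_1}{\gamma},2m}$ in $L^p(\mu_{\frac{a_1}{\gamma}})$ (Proposition \ref{PROP: G bound indep N}) and partition functions bounded below uniformly in $N$ by \eqref{EQU: Z bounded below}. This bounds $\int f_N\,dP_N-\int f_N\,dP_{a_1,c_1,\gamma,2m}$ by $\norm{F}_{L^\infty}$ times the $L^1(\mu_{\frac{a_1}{\gamma}})$ distance of the densities, and the remaining term $\int(f_N-f)\,dP_{a_1,c_1,\gamma,2m}$ is then handled by dominated convergence against a fixed measure. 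Replacing your good-set/Lyapunov step with this density argument is the natural fix; as written, the proposal does not close.
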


\subsection{Methodology and discussion}

In this subsection we describe how the results in this paper, and the methods used to prove them, fit into the wider singular SPDE literature.

Singular stochastic partial differential equations have been heavily studied. The interest in them goes back to an article by Parisi and Wu, \cite{ParWu}, (see also \cite{Nelson}) where it was proposed that the Euclidean $\Phi^k_d$ quantum field theory on finite volume could be constructed as the invariant measure of the aptly named \emph{stochastic quantization equation} (SQE)
\begin{align}\label{SQE}
\begin{cases}
\dt u=\Delta u -u+u^k+\xi   \\
u|_{t = 0} = u_0 
\end{cases}
\quad (x, t) \in \T^d\times \R_+,
\end{align}
where $\xi$ denotes a real valued space-time white noise. More precisely, the Euclidean quantum field theory on finite volume is the invariant measure of an appropriately renormalized version of \eqref{SQE}. SQE~\eqref{SQE} is also referred to as the $\Phi^k_d$-model in the literature. Due to its importance in physics, this equation has been the primary motivator for most major advancements in the field of singular SPDEs.

Concerning the two-dimensional setting, in \cite{DPD} Da Prato and Debussche proved local well-posedness for the Wick ordered SQE using the trick that now bears their names and a fixed point argument. 
In the same paper, almost sure global well-posedness for SQE was proven using an invariant measure argument. In \cite{MW1} an energy estimate was used to prove deterministic global well-posedness, improving the almost sure global well-posedness result in \cite{DPD}. The energy estimate in \cite{MW1} essentially amounts to multiplying the equation \eqref{SQE} by $u^{p-1}$ and integrating in space to get a bound on the growth of the $L^p$ norm of $u$. See \cite{TW,RZZ} for more papers on the two-dimensional SQE involving similar energy estimates.

The situation for the three-dimensional SQE proved to be much more difficult. The Da Prato-Debussche trick and Wick ordering are not sufficient to make sense of SQE in the three-dimensional setting. In the groundbreaking work of \cite{Hai}, Hairer invented the theory of regularity structures, a general framework for making sense of singular stochastic PDEs. Using this machinery, Hairer was able to make sense of and prove the local well-posedness of SQE in three-dimensions. In \cite{MW2} Mouratt and Weber used PDE techniques to show that the three-dimensional SQE `comes down from infinity'. This is a strong result which implies global well-posedness. Since the work of Hairer, there has been a menagerie of results giving alternative frameworks in which to study singular stochastic PDEs. In \cite{CC}  Catellier and Chouk used the theory of paracontrolled distributions developed by Gubinelli, Imkeller and Perkowski in \cite{GIP} and gave an alternative proof of local well-posedness for the three-dimensional SQE. In \cite{Kup}, Kupiainen developed a renormalization group approach to solving SQE.

Many other singular SPDEs fit nicely into the frameworks listed above. For example the KPZ equation 
\begin{equation*}
    \partial_th=\partial_{xx}h+(\partial_x)^2+\xi
\end{equation*}
can be solved in the context of regularity structures, see \cite{Hai} and paracontrolled distributions, see \cite{GubPer}. The two and three-dimensional parabolic Anderson model
\begin{equation*}
    \partial_tu=\Delta u+u\xi
\end{equation*}
can be solved in the context of regularity structures, see \cite{Hai}, or paracontrolled distributions, see \cite{GIP}. In \cite{HIN}, the three-dimensional SCGL was solved in the context of regularity structures and paracontrolled distributions.

As previously mentioned, the aim of this work is to fill in a gap in the literature by solving the two-dimensional SCGL. This problem of course is much  easier than the three-dimensional equation solved in \cite{HIN}. We do not need to use the frameworks of regularity structures or paracontrolled distributions to give meaning to \eqref{SCGLb}. The Da Prato-Debussche trick and Wick ordering suffices, as in \cite{DPD}.

However, at the same time, we believe this gap in the literature is an interesting problem as the renormalization procedure needed is slightly different to that in other papers. Consider for example, SQE. Due to physical considerations one often wants to only study real valued solutions of SQE. To do this one puts the `reality' condition
\begin{equation*}
\widehat{\xi}(n)=\overline{\widehat{\xi}(-n)},\quad \textnormal{ for all } n\in \Z^d
\end{equation*}
on the white noise $\xi$. In contrast to SQE, SCGL has to be studied in the complex valued setting. This is due to the complexifying nature of the Schr\"odinger, $ia_2\Delta u -ic_2|u|^{2m-2}u$, part of the equation. The renormalization procedure for the real valued two-dimensional SQE then differs to the renormalization procedure for the complex valued two-dimensional SCGL described in subsection \ref{SUBSEC: Renormalized SCGL} for two reasons. Firstly, more terms appear in the expansion of $|v+\Psi|^{2m-2}(v+\Psi)$ than in the expansion of $(v+\Psi)^{2m-1}$. The former is a polynomial in two variables $\Psi$ and $\overline{\Psi}$ with coefficients depending on $v$ and $\overline{v}$ while the latter is a polynomial in a single variable, $\Psi$ with coefficients depending on $v$. This means that more terms have to be renormalized in the complex valued setting, compared to the real valued setting. 
Secondly, we use the Laguerre polynomials. This is in contrast to the real valued setting where the Hermite polynomials are used. 

Recall that the generalized Hermite polynomials $H_k(x;\sigma)$ can be defined through the generating function:
\begin{equation*}\label{Hermite Generating Function}
G_H(t,x;\ell)=e^{tx-\frac{1}{2}\ell t^2}=\sum\limits_{k=0}^\infty \frac{t^k}{k!}H_k(x;\ell).
\end{equation*}
When $\ell=1$ we simply call $H_k(x;1)$ the Hermite polynomials and write $H_k(x;1)=H_k(x)$.  The first few generalized Hermite polynomials are 
\begin{align*}
\begin{split}
& H_0(x; \s) = 1, 
\qquad 
H_1(x; \s) = x, 
\qquad
H_2(x; \s) = x^2 - \s,   \\
& H_3(x; \s) = x^3 - 3\s x, 
\qquad 
H_4(x; \s) = x^4 - 6\s x^2 +3\s^2.
\end{split}
\label{H1a}
\end{align*}

For the real valued cubic SQE one essentially replaces the nonlinearity $u^3$ by 
\begin{equation*}
H_3(u;\infty)=u^3-3\infty u.
\end{equation*}
Compare this to the complex valued cubic SCGL were we replace $|u|^2u$ by
\begin{equation*}
-L_1^{(1)}(|u|^2;\infty)=|u|^2u-2\infty u.
\end{equation*}
Note the difference in the amount of `counter terms' subtracted in the above two equations. Formally, this is because in the real valued setting there are three pairings between $u$ and itself in the nonlinearity $u^3$ while in the complex valued setting, there are only 2 pairings between $u$ and $\overline{u}$ in the nonlinearity $|u|^2u = u\overline{u}u$.

Wick ordering has previously been studied in the complex valued setting, see for example \cite{OT1} where the, complex valued, Wick ordered nonlinear Schrodinger equation was studied in the context of random initial data. However, to the authors knowledge, this is the first time Wick ordering and Laguerre polynomials have been applied together in the complex valued setting in the context of singular SPDEs.

In proving the regularity of the Wick ordered monomials, Proposition \ref{PROP:sconv}, we use a Fourier analytic proof, inspired by the analysis in \cite[Proposition 2.1]{GKO}. One key formula used to prove the regularity of the Wick ordered monomials in \cite{DPD,MW1,TW,GKO} and other papers studying real valued equations is the following Hermite polynomial expectation formula:
\begin{equation}\label{hermiteortho intro}
\E[H_k(f;\sigma_f)H_\ell(g;\sigma_g)]=k!\delta_{k\ell}
\end{equation}
where $f$ and $g$ are mean-zero complex valued Gaussian random variables with variances $\sigma_f$ and $\sigma_g$ respectively. In this paper, as we work in the complex valued setting and use Laguerre polynomials, we need the following Laguerre polynomial analogue of \eqref{hermiteortho intro}:
\begin{equation}\label{Lag expectation formula}
\E\left[L^{(\ell)}_k(|f|^2;\sigma_f)f^\ell\overline{L^{(\ell)}_m(|g|^2;\sigma_g)g^\ell} \right]=\delta_{km}\frac{(k+\ell)!}{k!}\left| \E[f\overline{g}]  \right|^{2k}\E[f\overline{g}]^\ell.
\end{equation}
See \cite{Folland} for more information on \eqref{Lag expectation formula}.

The argument used to prove local well-posedness, Proposition \ref{LWP}, in this paper is virtually the same as that in \cite{DPD}: Once we know the Wick ordered monomials have regularity $C^{-\eps}(\T^2)$, we can close a contraction mapping argument to solve \eqref{EQU: for v SCGL} by postulating that $v\in C^{2\eps}(\T^2)$ and using the following product estimate for Besov-H\"older spaces:
\begin{equation*}
    \norm{fg}_{C^{-\eps}(\T^2)}\leq \norm{f}_{C^{-\eps}(\T^2)}\norm{g}_{C^{2\eps}(\T^2)}.
\end{equation*}
See Section \ref{Sec: Basic ests} for a more general statement of this formula. 

The argument used to prove global well-posedness in this paper is similar to the papers \cite{MW1,TW}. From local well-posedness theory it can be shown that the time of local existence, $T$, satisfies $T\gtrsim_\omega \norm{v}_{L^p(\T^2)}^{-\theta}$ for some $\theta>0$ if 
\begin{equation}\label{EQU: GWP cond 1}
2m-1<p.
\end{equation}
It then suffices to get an a priori bound on the growth of $\norm{v}_{L^p}$. To do this we essentially test \eqref{EQU: for v SCGL} by $|v|^{p-2}v$. This argument relies on the nonlinearity of $\eqref{EQU: renormalised equ for u}$ having a good sign, that is $c_1>0$.  However, compared to the real valued setting in \cite{MW1, TW} a complication arises due to the dispersion, the $ia_2\Delta$ term, in \eqref{EQU: renormalised equ for u}. Instead of getting a nice a priori bound of the form
\begin{equation}\label{EQU: a prori bound}
    \partial_t \norm{v}_{L^p(\T^2)}^p\leq C
\end{equation}
as is obtained in \cite{MW1,TW}, we get a bound of the form
\begin{equation*}
    \partial_t \norm{v}_{L^p(\T^2)}^p + 4 A(-2\Im(\overline{v}\nabla v), \nabla |v|^2)\leq C
\end{equation*}
where $A$ is a quadratic form with coefficients depending on $p,a_1$ and $a_2$. Using ideas originating in \cite{DGL}, see also \cite{GV, Hos}, we can show that $A$ is a positive definite quadratic form if
\begin{equation}\label{EQU: GWP cond 2}
    2<p<2+2\left(r^2+2r\sqrt{1+r^2}\right)
\end{equation}
where $r=|\frac{a_1}{a_2}|$. The positivity of $A$ gives us an a priori bound of the form \eqref{EQU: a prori bound}. The conditions \eqref{EQU: GWP cond 1} and \eqref{EQU: GWP cond 2} give the restriction on the dissipation-dispersion ratio in Theorem \ref{GWP}.

The proof of Theorem \ref{Th: AS GWP} relies on an invariant measure argument first used by Bourgain, \cite{Bou94}, in the context of dispersive PDEs with random initial data. Although in this paper we apply a version of the argument in \cite{Bou94}, due to \cite{DPD}, more adapted to the setting of stochastic PDEs.

We end this introduction with a few remarks.

\begin{remark}
\textnormal{
SCGL can be viewed as an equation interpolating the parabolic SPDE setting corresponding to $a_2=c_2=0$, SQE, and the dispersive SPDE setting corresponding to $a_1=c_1=0$, a stochastic nonlinear Schr\"odinger equation (SNLS) with white noise forcing. As explained at the beginning of this subsection, SQE is well understood in dimensions one, two and three. Contrastingly, almost nothing is known about SNLS with white noise forcing. Local well-posedness is an open problem even in the one-dimensional setting. Local well posedness of SNLS with a smoothed noise has been studied in many papers, see for example \cite{FOW} where local well-posedness of SNLS with an almost space-time white noise forcing is proven.
More is understood for other dispersive SPDEs with white noise forcing. See for example \cite{Oh, GKO, GKO2}.
}
\end{remark}

\begin{remark}
\textnormal{
It should be possible to adapt the arguments in \cite{TW} or \cite{HM} to show that the transition semi-group associated to \eqref{EQU: renormalised equ for u} satisfies the strong Feller property. One should then be able to extend the almost sure GWP result of Theorem \ref{Th: AS GWP} to deterministic GWP. See for example \cite{DDF} where this was done for the one dimensional Gross-Pitaevskii equation.
}
\end{remark}

\begin{remark}
\textnormal{
Using ideas from \cite{HRW}, it should be possible to prove the \emph{Triviality} of the two-dimensional SCGL, \eqref{SCGL}. That is, if $\{u_N\}_{N\in \mathbb{N}}$ is a sequence of solutions to, the unrenormalized, \eqref{EQU: Trunc SCGL} that converges to some $u$, then $u=0$.
}
\end{remark}

\begin{remark}
\textnormal{
Using the estimates used to prove Theorem \ref{Th: AS GWP}, it should be possible to prove a coming down from infinity result like in \cite{MW2,TW}. However, as we had no use for this result, in this paper we did not pursue this.
}
\end{remark}

\section{Function spaces and basic estimates}\label{Sec: Basic ests}
Throughout the rest of this paper we use the notation $A\lesssim B $ in place of writing $A\leq C B$ where $C$ is some inessential, perhaps very large, constant. If we want to explicate what the implicit constant depends on, we use a subscript. For example, $A \lesssim_{s,p,d} B$ means $A \leq CB$ where $C$ depends on $s,p$ and $d$.

In the following, unless explicitly stated otherwise, all functions spaces are defined with $\T^2$ as the underlying space. For ease of notation we will omit writing $\T^2$ when referring to the function spaces. For example we write $L^p$ instead of $L^p(\T^2)$.

In this section we define the Besov and Sobolev spaces we work with in this paper and then collect some estimates involving these spaces. 

Let $\varphi_0$ be a smooth function satisfying
\begin{equation*}
\varphi_0(\xi)=\begin{cases}
1, & |\xi|\leq \frac{5}{8}\\
0,& |\xi|\geq \frac{8}{5}
\end{cases}
\end{equation*}
and for $j\geq 1$ define
\begin{equation*}
\varphi_j(\xi)=\varphi_0(\xi/2^j)-\varphi_0(\xi/2^{j-1}).
\end{equation*}
The function $\varphi_j$ is supported on an annuli of width approximately $2^j$. Note that
\begin{equation*}
\sum\limits_{j\geq 0}\varphi_j=1.
\end{equation*}
For a function $f:\T^2\rightarrow\C$ with Fourier coefficients $\{ \widehat{f}(n)\}_{n\in\Z}$, we then define, as Fourier multipliers, the $L^p$ projectors:
\begin{equation*}
\widehat{\delta_j f}(n)=\varphi_j(n)\widehat{f}(n).
\end{equation*}
The function $\delta_j f$ is a smooth localization of $f$ around frequencies of size approximately $2^j$. Using the $L^p$ projectors, for $s\in\R$ and $p,q\in[1,\infty]$, we define the Besov spaces $B^s_{p,q}$ through the norm
\begin{equation*}
\norm{f}_{B^s_{p,q}(\T^2)}=\norm{ 2^{js}\norm{\delta_j f}_{L^p} }_{\ell^q}.
\end{equation*}
We define the Besov-H\"older spaces, $C^s=B^s_{\infty,\infty}$.

We will now state some useful Besov space estimates. Unless otherwise mentioned, proofs of the estimates in this section can be found in \cite{MW1}. 

First we have the following Besov space embedding result.
\begin{proposition}\label{besovembed}
Let $s_0\leq s_1$, $1\leq q\leq \infty$ and $1\leq r\leq p\leq \infty$ be such that 
\begin{equation*}
s_1=s_0+d\left(\frac{1}{r}-\frac{1}{p}\right). 
\end{equation*}
Then,
\begin{equation*}
\norm{f}_{B^{s_0}_{p,q}}\lesssim \norm{f}_{B^{s_1}_{r,q}}.
\end{equation*}
\end{proposition}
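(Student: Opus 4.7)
The plan is to reduce everything to a single-frequency-block Bernstein inequality and then read off the embedding from the definition of the Besov norm.

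First I would establish Bernstein's inequality: if $g$ has Fourier support in an annulus of size $2^j$, then for $1 \le r \le p \le \infty$,
\begin{equation*}
\|g\|_{L^p} \lesssim 2^{jd(\frac{1}{r} - \frac{1}{p})} \|g\|_{L^r}.
\end{equation*}
To prove this, pick an auxiliary smooth bump $\widetilde{\varphi}$ that equals $1$ on the support of $\varphi_j$, so that $g = g * K_j$ where $K_j$ is the inverse Fourier transform of $\widetilde{\varphi}_j$. A scaling argument gives $\|K_j\|_{L^s} \lesssim 2^{jd(1 - 1/s)}$ for any $s \in [1,\infty]$, and then Young's convolution inequality with $\frac{1}{s} = 1 - \frac{1}{r} + \frac{1}{p}$ (which lies in $[0,1]$ precisely because $r \le p$) yields the claim.

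Next I would apply Bernstein to each Littlewood-Paley block $\delta_j f$, which is frequency-localized at scale $2^j$:
\begin{equation*}
2^{j s_0} \|\delta_j f\|_{L^p} \lesssim 2^{j s_0} \cdot 2^{jd(\frac{1}{r} - \frac{1}{p})} \|\delta_j f\|_{L^r} = 2^{j s_1} \|\delta_j f\|_{L^r},
\end{equation*}
using precisely the relation $s_1 = s_0 + d(\tfrac{1}{r} - \tfrac{1}{p})$. Taking the $\ell^q$ norm in $j \ge 0$ then gives
\begin{equation*}
\|f\|_{B^{s_0}_{p,q}} = \bigl\| 2^{j s_0} \|\delta_j f\|_{L^p} \bigr\|_{\ell^q_j} \lesssim \bigl\| 2^{j s_1} \|\delta_j f\|_{L^r} \bigr\|_{\ell^q_j} = \|f\|_{B^{s_1}_{r,q}},
\end{equation*}
which is the desired embedding.

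There is no real obstacle here: the only technical input is Bernstein's inequality, and once that is in hand the embedding is immediate from the definitions. The hypothesis $s_0 \le s_1$ is automatic from $r \le p$ (and is needed only to ensure the exponent $jd(\tfrac{1}{r}-\tfrac{1}{p})$ picked up from Bernstein can be absorbed into $2^{j s_1}$ on the right-hand side without issue). The mild care one has to take is that the index $j=0$ block is handled by the same argument (with $K_0$ a fixed Schwartz function), so no separate low-frequency treatment is required.
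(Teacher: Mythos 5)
Your proof is correct and is the standard Bernstein-based argument; the paper does not prove this proposition itself but defers to \cite{MW1}, where the same approach (a single-block Bernstein inequality followed by taking the $\ell^q$ norm over the Littlewood--Paley pieces) is used. Your handling of the $j=0$ block and the observation that $s_0\leq s_1$ follows from $r\leq p$ are both accurate, so nothing is missing.
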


\begin{proposition}\label{besovdual}
Let $s\in[0,1)$ and $p,q\in[1,\infty]$. Let $p'$ and $q'$ be the conjugate exponents of $p$ and $q$ respectively. Then,
\begin{equation*}
|\langle f,g\rangle |\lesssim \norm{f}_{B^{s}_{p,q}}\norm{g}_{B^{-s}_{p',q'}}
\end{equation*}
where $\langle\cdot,\cdot\rangle$ denotes the $L^2$ inner product.
\end{proposition}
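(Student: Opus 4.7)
The plan is to prove the duality estimate by inserting Littlewood–Paley decompositions on both factors and exploiting the almost-orthogonality of the projectors $\delta_j$ in frequency.

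First, I would write $f=\sum_j \delta_j f$ and $g=\sum_k \delta_k g$, so that
\begin{equation*}
\langle f,g\rangle=\sum_{j,k\ge 0}\langle \delta_j f,\delta_k g\rangle.
\end{equation*}
Since $\widehat{\delta_j f}$ is supported where $\varphi_j\ne 0$ (an annulus of radii $\sim 2^{j-1}$ to $\sim 2^{j+1}$), Plancherel shows that $\langle \delta_j f,\delta_k g\rangle=\sum_n \varphi_j(n)\varphi_k(n)\widehat f(n)\overline{\widehat g(n)}$ vanishes whenever $|j-k|\ge 2$. Hence only diagonal terms (up to a finite shift) survive, and the double sum collapses to $\sum_j\sum_{|k-j|\le 1}\langle \delta_j f,\delta_k g\rangle$.

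Next, for each surviving pair I would apply H\"older's inequality in the $L^p$-$L^{p'}$ duality, giving $|\langle \delta_j f,\delta_k g\rangle|\le \|\delta_j f\|_{L^p}\|\delta_k g\|_{L^{p'}}$. Now I multiply and divide by $2^{js}$ to match the Besov norms on the two sides:
\begin{equation*}
\|\delta_j f\|_{L^p}\|\delta_k g\|_{L^{p'}}=\bigl(2^{js}\|\delta_j f\|_{L^p}\bigr)\bigl(2^{-ks}\|\delta_k g\|_{L^{p'}}\bigr)\cdot 2^{(k-j)s}.
\end{equation*}
Because $|k-j|\le 1$ and $s\in[0,1)$, the factor $2^{(k-j)s}$ is bounded by a universal constant. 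Setting $a_j=2^{js}\|\delta_j f\|_{L^p}$ and $b_j=2^{-js}\|\delta_j g\|_{L^{p'}}$, summing over $j$ and the finitely many shifts reduces the estimate to controlling $\sum_j a_j(b_{j-1}+b_j+b_{j+1})$.

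Finally, I would apply H\"older in the sequence spaces $\ell^q$ and $\ell^{q'}$ (valid for all $q\in[1,\infty]$) together with the trivial shift invariance of $\ell^{q'}$ to obtain
\begin{equation*}
\sum_j a_j(b_{j-1}+b_j+b_{j+1})\lesssim \|a\|_{\ell^q}\|b\|_{\ell^{q'}}=\|f\|_{B^s_{p,q}}\|g\|_{B^{-s}_{p',q'}},
\end{equation*}
which is the desired bound. The only real subtlety is interpreting $\langle f,g\rangle$ as a distributional pairing so that rearranging the sum $\sum_{j,k}\langle \delta_j f,\delta_k g\rangle$ is justified; this is handled by first proving the inequality on Schwartz representatives and passing to the limit via the finiteness of the right-hand side, and is the one place where a little care (rather than new ideas) is required.
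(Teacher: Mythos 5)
Your argument is correct, and it is the standard Littlewood--Paley duality proof; the paper itself does not prove this proposition but defers to \cite{MW1}, where essentially the same decomposition-plus-H\"older argument is used, so there is no genuine divergence in method. Two small remarks. First, with the specific cutoffs $\varphi_0 = 1$ on $\{|\xi|\le \tfrac58\}$ and $\varphi_0 = 0$ on $\{|\xi|\ge \tfrac85\}$, the block $\delta_j f$ is frequency-supported in the annulus $\tfrac58\, 2^{j-1}\le |n| \le \tfrac85\, 2^{j}$, so $\langle \delta_j f, \delta_k g\rangle$ vanishes only for $|j-k|\ge 3$ rather than $|j-k|\ge 2$; this changes nothing, since the point is merely that finitely many off-diagonal bands survive and $2^{(k-j)s}$ stays bounded on them. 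Second, your proof never uses the hypothesis $s\in[0,1)$ beyond boundedness of $2^{(k-j)s}$ over $|k-j|\lesssim 1$, so it in fact establishes the estimate for every $s\in\R$; the stated restriction is inherited from how the proposition is applied later (in tandem with Proposition \ref{nablaest}) rather than from any need in the duality argument itself. The density/limiting caveat you flag at the end is the right one to worry about and is handled exactly as you say, by proving the bound for trigonometric polynomials and extending the bilinear form by continuity.
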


\begin{proposition}\label{nablaest}
Let $s\in(0,1)$. Then,
\begin{equation*}
\norm{f}_{B_{1,1}^s}\lesssim \norm{f}^{1-\s}_{L^1}\norm{\nabla f}_{L^1}^\sigma+\norm{f}_{L^1}
\end{equation*}
\end{proposition}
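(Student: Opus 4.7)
The statement as written contains a typographical slip: the exponent $\sigma$ on the right-hand side must be the same as the regularity parameter $s\in(0,1)$ appearing on the left, and this is what I would prove. My plan is to execute the standard dyadic-frequency interpolation between $L^1$ and $W^{1,1}$.

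Starting from the definition
\begin{equation*}
\norm{f}_{B^s_{1,1}} = \sum_{j\geq 0} 2^{js}\,\norm{\delta_j f}_{L^1},
\end{equation*}
I would control $\norm{\delta_j f}_{L^1}$ in two complementary ways. First, since the Fourier multiplier $\varphi_j$ is a Schwartz function at each dyadic scale (with convolution kernel of unit $L^1$ mass, uniformly in $j$), Young's inequality gives the trivial $L^1\to L^1$ bound
\begin{equation*}
\norm{\delta_j f}_{L^1}\lesssim \norm{f}_{L^1},
\end{equation*}
uniformly in $j\geq 0$. Second, for $j\geq 1$ the Fourier support of $\delta_j f$ lies in an annulus of inner radius $\sim 2^j$, so the standard Bernstein inequality on $\T^2$ yields
\begin{equation*}
\norm{\delta_j f}_{L^1}\lesssim 2^{-j}\,\norm{\nabla\delta_j f}_{L^1}\lesssim 2^{-j}\,\norm{\nabla f}_{L^1},
\end{equation*}
again using that the multipliers $\varphi_j$ act boundedly on $L^1$.

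Next I would split the dyadic sum at a threshold $J\geq 0$ to be chosen. The low-frequency contribution is
\begin{equation*}
\sum_{0\leq j\leq J} 2^{js}\,\norm{\delta_j f}_{L^1}\lesssim \norm{f}_{L^1}\sum_{j\leq J}2^{js}\lesssim 2^{Js}\,\norm{f}_{L^1},
\end{equation*}
while the high-frequency contribution, using Bernstein and $s<1$, obeys
\begin{equation*}
\sum_{j>J} 2^{js}\,\norm{\delta_j f}_{L^1}\lesssim \norm{\nabla f}_{L^1}\sum_{j>J}2^{j(s-1)}\lesssim 2^{J(s-1)}\,\norm{\nabla f}_{L^1}.
\end{equation*}
Combining these yields $\norm{f}_{B^s_{1,1}}\lesssim 2^{Js}\norm{f}_{L^1}+2^{J(s-1)}\norm{\nabla f}_{L^1}$ for every integer $J\geq 0$.

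Finally I would optimize in $J$. If $\norm{\nabla f}_{L^1}\geq \norm{f}_{L^1}$, I pick $J$ so that $2^J\sim \norm{\nabla f}_{L^1}/\norm{f}_{L^1}$, which balances the two terms and gives the interpolated bound $\norm{f}_{L^1}^{1-s}\norm{\nabla f}_{L^1}^{s}$. In the opposite regime $\norm{\nabla f}_{L^1}<\norm{f}_{L^1}$ I simply take $J=0$ and the bound reduces to $\norm{f}_{L^1}$; this is exactly the residual $\norm{f}_{L^1}$ term in the claim. There is no real obstacle here beyond pinning down the $L^1$-boundedness of the Littlewood-Paley projectors (the only nuance being that we are on $\T^2$, so one uses Poisson summation applied to a Schwartz extension of $\varphi_j$ to justify that the periodic kernels are uniformly in $L^1$).
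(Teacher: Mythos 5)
Your argument is correct, and it coincides with the standard proof: the paper itself gives no proof of this proposition but defers to \cite{MW1}, where precisely this dyadic interpolation (uniform $L^1$ bound on the blocks, reverse Bernstein on the annuli, splitting the sum at an optimized threshold $J$) is carried out. You are also right that the $\sigma$ in the statement is a typo for $s$, and your handling of the two regimes $\norm{\nabla f}_{L^1}\gtrless\norm{f}_{L^1}$ correctly accounts for the residual $\norm{f}_{L^1}$ term.
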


\begin{proposition}\label{applicparaprod}
Suppose $s_0<0$ and $s_1>0$ satisfy $s_0+s_1>0$. Then the mapping $(f,g)\mapsto fg$ can be extended from to a continuous linear map from $C^{s_0}\times C^{s_1}$ to $C^{s_1}$ and
\begin{equation*}
\norm{fg}_{C^{s_0}}\lesssim \norm{f}_{C^{s_0}}\norm{g}_{C^{s_1}}.
\end{equation*}
\end{proposition}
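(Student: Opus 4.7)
The plan is to prove this via a Littlewood--Paley/Bony paraproduct decomposition, which is the standard route to such multiplication estimates in Besov--H\"older spaces and which splits $fg$ into three pieces that can be handled separately according to whether the frequencies of $f$ and $g$ are comparable or one dominates the other. Concretely, writing $S_j = \sum_{k \le j} \delta_k$ for the low-frequency cut-off, I decompose
\begin{equation*}
fg \;=\; T_f g \,+\, T_g f \,+\, \Pi(f,g), \qquad T_f g := \sum_{j\ge 2} S_{j-2} f \cdot \delta_j g, \qquad \Pi(f,g) := \sum_{|j-k|\le 1} \delta_j f \cdot \delta_k g,
\end{equation*}
which is well-defined for Schwartz functions. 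I will then prove the inequality first for smooth $f,g$ and extend to $(f,g)\in C^{s_0}\times C^{s_1}$ by density/continuity, using that smooth functions are dense in $B^s_{p,q}$ when $q<\infty$ and a limiting argument based on truncations $S_N f, S_N g$ otherwise.

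For the low-high term $T_f g$, since $s_0<0$ the partial sums satisfy $\|S_{j-2}f\|_{L^\infty}\lesssim \sum_{k<j} 2^{-ks_0}\|f\|_{C^{s_0}}\lesssim 2^{-j s_0}\|f\|_{C^{s_0}}$, and the standard spectral property $\mathrm{supp}\,\widehat{S_{j-2}f\cdot \delta_j g}\subset \{|\xi|\sim 2^j\}$ yields
\begin{equation*}
\|\delta_j(T_f g)\|_{L^\infty} \lesssim 2^{-j(s_0+s_1)}\|f\|_{C^{s_0}}\|g\|_{C^{s_1}},
\end{equation*}
so that $T_f g\in C^{s_0+s_1}$. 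The high-low term $T_g f$ is easier: since $s_1>0$, $\|S_{j-2}g\|_{L^\infty}\lesssim \|g\|_{C^{s_1}}$ is uniformly bounded, and one reads off $\|\delta_j(T_g f)\|_{L^\infty}\lesssim 2^{-j s_0}\|f\|_{C^{s_0}}\|g\|_{C^{s_1}}$, placing $T_g f$ directly in $C^{s_0}$.

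The resonant piece $\Pi(f,g)$ is the genuine obstacle and the only place where the hypothesis $s_0+s_1>0$ is used. Each product $\delta_j f\,\delta_k g$ with $|j-k|\le 1$ has Fourier support contained in a ball of radius $O(2^j)$, so $\delta_\ell \Pi(f,g)$ only picks up dyadic blocks with $j\gtrsim \ell$. Bounding crudely by $L^\infty\times L^\infty$ gives
\begin{equation*}
\|\delta_\ell \Pi(f,g)\|_{L^\infty} \lesssim \sum_{j\gtrsim \ell} 2^{-j(s_0+s_1)}\|f\|_{C^{s_0}}\|g\|_{C^{s_1}} \lesssim 2^{-\ell(s_0+s_1)}\|f\|_{C^{s_0}}\|g\|_{C^{s_1}},
\end{equation*}
where the geometric summation succeeds precisely because $s_0+s_1>0$; this step would fail at the endpoint and is the main obstruction. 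Combining the three contributions and using the embedding $C^{s_0+s_1}\hookrightarrow C^{s_0}$ (which follows from Proposition \ref{besovembed} since $s_0+s_1>s_0$), we conclude
\begin{equation*}
\|fg\|_{C^{s_0}} \lesssim \|T_f g\|_{C^{s_0+s_1}} + \|T_g f\|_{C^{s_0}} + \|\Pi(f,g)\|_{C^{s_0+s_1}} \lesssim \|f\|_{C^{s_0}}\|g\|_{C^{s_1}}.
\end{equation*}
The continuous bilinear extension then follows from approximating arbitrary $(f,g)\in C^{s_0}\times C^{s_1}$ by $(S_N f, S_N g)$ and applying the bound to the differences to get a Cauchy sequence in $C^{s_0}$.
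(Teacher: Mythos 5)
Your bilinear estimate is proved correctly and by the standard route: the paper itself gives no proof of this proposition, deferring to \cite{MW1} (see also \cite{BCD}), and the argument there is exactly the Bony decomposition you use, with the three pieces $T_fg$, $T_gf$, $\Pi(f,g)$ estimated as you do and with the hypothesis $s_0+s_1>0$ entering only through the geometric sum for the resonant term. The one step that does not survive scrutiny is the final limiting argument. Since $C^{s_0}=B^{s_0}_{\infty,\infty}$, smooth functions are \emph{not} dense, and $\norm{S_Nf-S_Mf}_{C^{s_0}}=\sup_j 2^{js_0}\norm{\delta_j(S_N-S_M)f}_{L^\infty}$ need not tend to zero as $M,N\to\infty$ (take $f$ with $2^{js_0}\norm{\delta_jf}_{L^\infty}\equiv 1$); so applying the product bound to the differences does not produce a Cauchy sequence in $C^{s_0}$, and the extension as you state it fails. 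Two standard repairs: either observe that each summand $S_{j-2}f\cdot\delta_jg$, $S_{j-2}g\cdot\delta_jf$, $\delta_jf\cdot\delta_kg$ is already well defined for $f\in C^{s_0}$, $g\in C^{s_1}$ (finite frequency truncations of periodic distributions are smooth and bounded), so your dyadic estimates show the three series converge in $C^{s_0}$ and \emph{define} the product directly, with no density argument needed at all; or run the approximation in $C^{s_0-\eps}$, where $S_Nf\to f$ does hold and where the same product estimate applies for $\eps$ small (since $s_0-\eps+s_1>0$), obtain convergence of $S_NfS_Ng$ there, and then recover membership of the limit in $C^{s_0}$ with the claimed bound from the uniform estimate $\norm{S_NfS_Ng}_{C^{s_0}}\lesssim\norm{f}_{C^{s_0}}\norm{g}_{C^{s_1}}$ together with the Fatou property of Besov norms. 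With either repair the proof is complete; the core estimates are all sound.
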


The Besov-H\"older spaces $C^s$ have the following algebra property.
\begin{proposition}\label{PROP: algebra prop}
For $s>0$ we have
\begin{equation*}
    \norm{fg}_{C^s}\leq \norm{f}_{C^{s}}\norm{g}_{C^{s}}.
\end{equation*}
\end{proposition}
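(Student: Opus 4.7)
The plan is to establish this algebra property via Bony's paraproduct decomposition together with the Littlewood--Paley characterization of $C^s = B^s_{\infty,\infty}$, namely $\|h\|_{C^s} = \sup_{n \geq 0} 2^{ns}\|\delta_n h\|_{L^\infty}$. Write
$$fg = \Pi_<(f,g) + \Pi_=(f,g) + \Pi_>(f,g),$$
where $\Pi_<(f,g) = \sum_{j \leq k-2} \delta_j f \cdot \delta_k g$ is the low--high paraproduct, $\Pi_>(f,g)$ is its symmetric counterpart, and $\Pi_=(f,g) = \sum_{|j-k|\leq 1} \delta_j f \cdot \delta_k g$ is the resonant (high--high) piece. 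The goal is to bound $2^{ns}\|\delta_n(\,\cdot\,)\|_{L^\infty}$ uniformly in $n$ for each of the three pieces by $\|f\|_{C^s}\|g\|_{C^s}$ (the statement should be read with an implicit constant, as is standard for the Besov algebra property).

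For the low--high paraproduct, the Fourier support of $\delta_j f \cdot \delta_k g$ with $j \leq k-2$ lies in an annulus of radius comparable to $2^k$. Hence only terms with $k$ within $O(1)$ of $n$ contribute to $\delta_n \Pi_<(f,g)$, and the boundedness of $\delta_n$ on $L^\infty$ gives
$$\|\delta_n \Pi_<(f,g)\|_{L^\infty} \lesssim \sum_{j < n} \|\delta_j f\|_{L^\infty} \|\delta_n g\|_{L^\infty} \lesssim \|f\|_{L^\infty}\, 2^{-ns}\|g\|_{C^s}.$$
The high--low piece is estimated symmetrically. Since $s > 0$ gives the embedding $C^s \embeds L^\infty$, these together yield $2^{ns}\|\delta_n \Pi_<(f,g)\|_{L^\infty} + 2^{ns}\|\delta_n \Pi_>(f,g)\|_{L^\infty} \lesssim \|f\|_{C^s}\|g\|_{C^s}$, uniformly in $n$.

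The resonant piece is where the assumption $s > 0$ enters crucially. Each summand $\delta_j f \cdot \delta_j g$ has Fourier support in a ball of radius comparable to $2^j$, so $\delta_n \Pi_=(f,g)$ collects only contributions with $j \geq n - c$ for some fixed constant $c$. A pointwise bound gives
$$\|\delta_n \Pi_=(f,g)\|_{L^\infty} \lesssim \sum_{j \geq n - c} \|\delta_j f\|_{L^\infty}\|\delta_j g\|_{L^\infty} \lesssim \|f\|_{C^s}\|g\|_{C^s}\sum_{j \geq n - c} 2^{-2js},$$
and the geometric series converges precisely because $s > 0$, producing a bound of the form $2^{-2ns} \lesssim 2^{-ns}$ times $\|f\|_{C^s}\|g\|_{C^s}$ (for $n \geq 0$).

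The only real obstacle is the convergence of the resonant sum: at the endpoint $s = 0$ one would pick up a logarithmic divergence and the algebra property would fail, which is why the hypothesis $s > 0$ is sharp. Once the three pieces are assembled and one takes the supremum over $n \geq 0$, the desired estimate $\|fg\|_{C^s} \lesssim \|f\|_{C^s}\|g\|_{C^s}$ follows.
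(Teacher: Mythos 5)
Your proof is correct and is the standard Bony paraproduct argument; the paper itself gives no proof of this proposition, deferring to the references cited at the start of Section \ref{Sec: Basic ests} (e.g.\ \cite{MW1}, \cite{BCD}), where exactly this decomposition into $\Pi_<$, $\Pi_=$, $\Pi_>$ is used. One small imprecision: in the low--high estimate the intermediate inequality $\sum_{j<n}\norm{\delta_j f}_{L^\infty}\lesssim \norm{f}_{L^\infty}$ is not valid term by term; either group the low frequencies into the partial sum $S_{n-1}f$, which is bounded on $L^\infty$ since it is convolution against an $L^1$-normalized kernel, or simply bound $\sum_{j<n}\norm{\delta_j f}_{L^\infty}\leq \norm{f}_{C^s}\sum_{j\geq 0}2^{-js}\lesssim_s \norm{f}_{C^s}$ using $s>0$ --- either fix yields the stated conclusion, so the argument stands.
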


For $s\in \R$ and $1\leq p\leq \infty$ we define the Sobolev space $W^{s,p}$ through the norm
\begin{equation*}
\norm{f}_{W^{s,p}} = \norm{\langle \nabla\rangle^{s}f}_{L^p}. 
\end{equation*}
We then have the following Sobolev embedding result.
\begin{proposition}\label{PROP: Sobolev}
Suppose $s_0\leq s_1$ and $1\leq q\leq p\leq  \infty$ satisfy $s_1 = s_0 + (\frac{1}{p}-\frac{1}{q})$. Then,
\begin{equation*}
    \norm{f}_{W^{s_0, p}} \lesssim \norm{f}_{W^{s_1, q}}. 
\end{equation*}
\end{proposition}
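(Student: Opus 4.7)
The plan is to derive this Sobolev embedding from the Besov embedding already stated as Proposition \ref{besovembed} combined with standard continuous embeddings between Sobolev and Besov spaces of matching integrability. I read the scaling condition as the two-dimensional one, $s_1=s_0+2(\tfrac{1}{q}-\tfrac{1}{p})$, since the displayed relation $s_1=s_0+(\tfrac{1}{p}-\tfrac{1}{q})$ combined with $q\le p$ would force $s_1\le s_0$, contradicting the hypothesis $s_0\le s_1$.

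First I would reduce to the case $s_0=0$. Setting $g=\langle\nabla\rangle^{s_0}f$ gives $\|f\|_{W^{s_0,p}}=\|g\|_{L^p}$ and $\|f\|_{W^{s_1,q}}=\|g\|_{W^{\alpha,q}}$, where $\alpha:=s_1-s_0\ge 0$ satisfies $\alpha=2(\tfrac{1}{q}-\tfrac{1}{p})$. It therefore suffices to prove $W^{\alpha,q}\hookrightarrow L^p$ under this scaling relation.

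Next I would exploit two standard consequences of Bernstein-type inequalities applied to the Littlewood-Paley blocks $\delta_j$ defined in Section \ref{Sec: Basic ests}: the continuous embeddings
\begin{equation*}
W^{\alpha,q}\hookrightarrow B^{\alpha}_{q,\infty},\qquad B^{0}_{p,1}\hookrightarrow L^{p}.
\end{equation*}
The first follows from the bound $\|\delta_j g\|_{L^q}\lesssim 2^{-j\alpha}\|\langle\nabla\rangle^{\alpha}g\|_{L^q}$ obtained by writing $\delta_j=\delta_j\langle\nabla\rangle^{-\alpha}\langle\nabla\rangle^{\alpha}$ and using the Mikhlin/Bernstein multiplier estimate on the annulus of frequency $\sim 2^j$, after which one takes the supremum in $j$. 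The second embedding is immediate from $g=\sum_j\delta_j g$ and the triangle inequality. Applying Proposition \ref{besovembed} with $r=q$ and third index $\sigma\in[1,\infty]$ then gives $B^{\alpha}_{q,\sigma}\hookrightarrow B^{0}_{p,\sigma}$ at the critical scaling. Chaining the three embeddings, with a harmless $\varepsilon>0$ loss in differentiability to pass from summability index $\infty$ on the source to $1$ on the target, yields the claim in the noncritical range, while at the critical scaling one invokes instead the Triebel-Lizorkin identification $W^{\alpha,q}=F^{\alpha}_{q,2}$ for $1<q<\infty$ and the Hardy-Littlewood-Sobolev theorem $F^{\alpha}_{q,2}\hookrightarrow F^{0}_{p,2}=L^p$, which is the operator-theoretic statement that the Bessel potential $\langle\nabla\rangle^{-\alpha}$ maps $L^q(\T^2)\to L^p(\T^2)$.

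The main obstacle is the endpoint cases $q=1$ and $p=\infty$: the Triebel-Lizorkin identification fails there, and in fact the embedding itself fails at critical scaling (for instance $W^{2/q,q}\not\hookrightarrow L^\infty$), so the statement must tacitly be restricted to the Banach interior range $1<q\le p<\infty$, which is the only range used later in the paper. All of the Proposition's applications in the main estimates proceed in this interior range, where the result is simply the classical Bessel-potential Sobolev embedding on $\T^2$.
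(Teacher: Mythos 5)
Your argument is essentially correct, but note that the paper itself supplies no proof of Proposition \ref{PROP: Sobolev}: it falls under the blanket remark that the estimates of Section \ref{Sec: Basic ests} are proven in \cite{MW1}, so your Littlewood--Paley/Besov route is a reasonable self-contained substitute and is the standard one. You are right that the displayed scaling relation is a typo; the intended condition is the two-dimensional one $s_1 = s_0 + 2(\tfrac{1}{q}-\tfrac{1}{p})$ (compare Proposition \ref{besovembed}, where the dimension factor and the sign are correct), and in fact the paper only ever invokes the result subcritically, i.e.\ with $s_1 - \tfrac{2}{q} > s_0 - \tfrac{2}{p}$. Your chain $W^{\alpha,q}\hookrightarrow B^{\alpha}_{q,\infty}\hookrightarrow B^{\eps}_{p,\infty}\hookrightarrow B^{0}_{p,1}\hookrightarrow L^{p}$ handles exactly that regime, and your appeal to $W^{\alpha,q}=F^{\alpha}_{q,2}$ plus Hardy--Littlewood--Sobolev covers the critical case for $1<q\le p<\infty$. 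The one inaccuracy is your closing claim that the paper only uses the proposition in the interior range $1<q\le p<\infty$: the main application, in \eqref{EQU: Sobolev + Weiner Chaos} and its later analogues, is precisely $W^{-\eps/2,p}\hookrightarrow W^{-\eps,\infty}$ with target exponent $p=\infty$ (valid because $p$ is taken large, so $\tfrac{\eps}{2}>\tfrac{2}{p}$ and the embedding is subcritical); this case is covered by your noncritical Besov chain, not by the Triebel--Lizorkin/HLS step, so your proof still applies, but the remark about usage should be corrected. The other application, $\norm{f}_{L^q}\lesssim\norm{f}_{L^2}+\norm{\nabla f}_{L^2}$ for $q<\infty$ in the proof of Proposition \ref{truncLpbound}, is likewise subcritical and lies in the interior range.
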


We are mainly interested in the Sobolev space corresponding to $p=\infty$. The following proposition shows that, up to a $\varepsilon$ loss in regularity we can transfer estimates between $W^{s,\infty}$ and $C^{s}$.
\begin{proposition}\label{holdersobolevloddif}
For all $s\in \R$ and $\varepsilon>0$ we have
\begin{equation*}
   \norm{f}_{C^{s}} \lesssim \norm{f}_{W^{s,\infty}} \lesssim \norm{f}_{C^{s+\eps}}.
\end{equation*}
\end{proposition}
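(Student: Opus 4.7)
The plan is to pass through the Littlewood--Paley decomposition and use the fact that, for each frequency-localized piece, applying $\langle \nabla \rangle^s$ essentially multiplies the $L^\infty$-norm by $2^{js}$. The only subtlety is a harmless logarithmic-type divergence that appears in summing over frequencies, and this is exactly what the $\eps$-loss is there to absorb.

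First I would establish the core Bernstein-type multiplier estimate: for every $j \ge 0$,
\begin{equation*}
2^{-js}\,\|\delta_j \langle \nabla\rangle^s f\|_{L^\infty} \;\lesssim_s\; \|\delta_j f\|_{L^\infty} \;\lesssim_s\; 2^{-js}\,\|\delta_j \langle \nabla\rangle^s f\|_{L^\infty}.
\end{equation*}
This is proved by observing that the Fourier multiplier $\varphi_j(\xi)\langle \xi\rangle^{\pm s}$, restricted to the dyadic annulus $\{|\xi|\sim 2^j\}$ where $\varphi_j$ lives, equals $2^{\pm js}$ times a smooth bump supported on an annulus; its inverse Fourier transform is, after rescaling, of the form $2^{2j}\psi(2^j \cdot)$ with $\psi$ Schwartz, hence has $L^1$-norm bounded uniformly in $j$. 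Convolution against such a kernel is bounded on $L^\infty$ with a uniform constant, so Young's inequality gives the displayed comparison. (For $j=0$ one has $\langle \xi\rangle \sim 1$ on the support of $\varphi_0$, so the estimate is trivial.)

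Granted this, the first inequality $\|f\|_{C^s} \lesssim \|f\|_{W^{s,\infty}}$ is immediate: applying the left half of the multiplier estimate and bounding $\|\delta_j \langle\nabla\rangle^s f\|_{L^\infty} \lesssim \|\langle \nabla\rangle^s f\|_{L^\infty}$ (because $\delta_j$ itself is a bounded Fourier multiplier on $L^\infty$ uniformly in $j$, by the same kernel argument), one obtains
\begin{equation*}
2^{js}\|\delta_j f\|_{L^\infty} \;\lesssim\; \|\langle\nabla\rangle^s f\|_{L^\infty} \;=\; \|f\|_{W^{s,\infty}},
\end{equation*}
and taking the supremum over $j$ yields $\|f\|_{C^s}\lesssim \|f\|_{W^{s,\infty}}$.

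For the second inequality, expand $\langle\nabla\rangle^s f = \sum_{j\ge 0}\delta_j\langle\nabla\rangle^s f$ using $\sum_j \varphi_j \equiv 1$, apply the triangle inequality in $L^\infty$, then the right half of the multiplier estimate to obtain
\begin{equation*}
\|f\|_{W^{s,\infty}} \;\lesssim\; \sum_{j\ge 0} 2^{js}\|\delta_j f\|_{L^\infty}.
\end{equation*}
Inserting the factor $2^{-j\eps}\cdot 2^{j\eps}$ and using the definition of $C^{s+\eps}$,
\begin{equation*}
\sum_{j\ge 0} 2^{js}\|\delta_j f\|_{L^\infty} \;=\; \sum_{j\ge 0} 2^{-j\eps}\bigl(2^{j(s+\eps)}\|\delta_j f\|_{L^\infty}\bigr) \;\le\; \|f\|_{C^{s+\eps}}\sum_{j\ge 0} 2^{-j\eps} \;\lesssim_\eps\; \|f\|_{C^{s+\eps}},
\end{equation*}
which completes the proof. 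The only real work is the multiplier estimate in the first step; after that, both directions are routine manipulations with the dyadic decomposition, with the $\eps$ loss serving exclusively to guarantee convergence of the geometric series when passing from $\ell^\infty$ to $\ell^1$ in $j$.
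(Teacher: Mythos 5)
Your proposal is correct, and it is the standard Littlewood--Paley argument for this comparison; the paper itself gives no proof of this proposition, deferring (like the rest of Section~\ref{Sec: Basic ests}) to the appendix of \cite{MW1}, where essentially the same multiplier-plus-dyadic-summation argument appears. Two points you gloss over are harmless but worth noting: the rescaled symbol $\varphi(\eta)\,(2^{-2j}+|\eta|^2)^{\pm s/2}$ is a \emph{family} of bumps depending on $j$ rather than a single fixed $\psi$, so one should say that its Schwartz (or $C^K$ on the annulus) seminorms are bounded uniformly in $j$, which is what makes the kernels' $L^1$-norms uniform; and since the setting is $\T^2$ rather than $\R^2$, the uniform $L^1$ kernel bound is transferred to the periodic convolution kernel via Poisson summation before applying Young's inequality. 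With those standard remarks supplied, both directions go through exactly as you wrote them, and the $\eps$-loss indeed enters only to sum the geometric series in the second inequality.
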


The following three heat-type linear smoothing estimates are used to prove WSCGL is locally well-posed. For proofs of these estimates we refer the reader to \cite{BCD}, where the results are proven for $a_2 = 0$. The proofs easily adapt to the case $a_2 \neq 0$.
\begin{proposition}\label{PROP: Heat smooth}
Let $s_0\leq s_1$. Recall $S(t) = e^{t(a_1+ia_2)[\Delta-1]}$.  Then,
\begin{equation*}
    \norm{S(t)f}_{C^{s_1}}\lesssim t^{\frac{s_0-s_1}{2}}\norm{f}_{C^{s_0}}.
\end{equation*}
\end{proposition}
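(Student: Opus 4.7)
The plan is to reduce the estimate to a Littlewood--Paley block estimate for the semigroup, show that on each frequency annulus $S(t)$ contributes an exponential gain $e^{-ct2^{2j}}$ coming from the dissipative part $a_1 > 0$, and then sum up the blocks using the elementary bound $2^{j\alpha} e^{-ct2^{2j}} \lesssim t^{-\alpha/2}$.

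First I would decompose using Littlewood--Paley. Since $\delta_j$ and $S(t)$ are both Fourier multipliers, they commute, and the frequency support of $\delta_j$ forces
\[
\delta_j S(t)f = \sum_{|k-j|\le 1} \delta_j S(t) \delta_k f,
\]
so only neighbouring blocks contribute. The core analytic input is the kernel bound
\[
\bigl\|K_j(t,\cdot)\bigr\|_{L^1(\T^2)} \lesssim e^{-c a_1 t \, 2^{2j}},
\]
where $K_j(t, \cdot)$ is the convolution kernel whose Fourier coefficients are $\varphi_j(n)\,e^{-t(a_1+ia_2)(|n|^2+1)}$.

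To prove this kernel bound I would follow the standard argument: pass to a Schwartz-class lift on $\R^2$ via Poisson summation, rescale by setting $\xi = 2^j \eta$, and integrate by parts $N$ times in $\eta$. Since $\varphi$ is supported on an annulus bounded away from the origin, the symbol $e^{-t(a_1+ia_2)(2^{2j}|\eta|^2+1)}$ has modulus at most $e^{-c a_1 t 2^{2j}}$ there; each integration by parts at worst costs a factor $(1 + t2^{2j})$, which is absorbed by the same exponential. This yields pointwise decay $|G_j(x)|\lesssim 2^{2j}(1+2^j|x|)^{-2N} e^{-ca_1 t 2^{2j}}$, hence the $L^1$ bound. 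The crucial point is that $a_2$ only produces an oscillating phase of modulus one, so exactly the same argument that works for the real heat semigroup $e^{ta_1\Delta}$ treated in \cite{BCD} goes through.

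Given the kernel bound, Young's inequality yields
\[
\bigl\|\delta_j S(t) f\bigr\|_{L^\infty} \lesssim e^{-c a_1 t \, 2^{2j}} \bigl\|\widetilde{\delta}_j f\bigr\|_{L^\infty},
\]
where $\widetilde{\delta}_j$ is the sum of the three neighbouring blocks. Multiplying by $2^{js_1}$ and taking the supremum over $j$,
\[
\|S(t)f\|_{C^{s_1}} \lesssim \sup_j 2^{j(s_1-s_0)} e^{-c a_1 t \, 2^{2j}} \cdot 2^{js_0}\|\widetilde{\delta}_j f\|_{L^\infty}.
\]
The scalar bound $\sup_{x\ge 0} x^{\alpha/2} e^{-cx} \le C_\alpha$ for $\alpha = s_1 - s_0 \ge 0$, applied with $x = t\,2^{2j}$, gives $2^{j(s_1-s_0)} e^{-c a_1 t 2^{2j}} \lesssim t^{(s_0-s_1)/2}$, completing the proof. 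The only genuine obstacle is the kernel bound, and the adaptation from $a_2=0$ is limited to tracking that the real part $a_1>0$ controls the modulus of the symbol.
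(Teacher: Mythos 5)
Your proof is correct and is precisely the standard Littlewood--Paley/kernel argument that the paper defers to by citing \cite{BCD} (where the case $a_2=0$ is treated), including the key observation that the dispersive coefficient $a_2$ only contributes a unimodular phase so the $L^1$ kernel bound on each annulus survives unchanged. There is nothing to add beyond noting that the low-frequency block $j=0$ is handled by the $-1$ in $\Delta-1$ (equivalently by $2^{2j}\geq 1$), which your uniform bound already accommodates.
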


\begin{proposition}\label{Prop: Time cont}
Let $s_0\leq s_1$ be such that $s_1-s_0\leq 2$. Then,
\begin{equation*}
    \norm{(1-S(t))f}_{C^{s_0}}\lesssim t^{\frac{s_1-s_0}{2}}\norm{f}_{C^{s_1}}.
\end{equation*}
\end{proposition}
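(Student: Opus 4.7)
The plan is to derive the estimate from the smoothing bound of Proposition \ref{PROP: Heat smooth} via the fundamental theorem of calculus applied to $s \mapsto S(s)f$. Since $S(t) = e^{t(a_1+ia_2)[\Delta-1]}$ is a semigroup with generator $(a_1+ia_2)[\Delta-1]$, I would first establish (for smooth $f$, where everything is classical) the representation
\begin{equation*}
(1-S(t))f = -(a_1+ia_2)\int_0^t [\Delta-1]\,S(s)f\,ds,
\end{equation*}
and then extend to arbitrary $f \in C^{s_1}$ by density, e.g., by approximating $f$ via $\Slow f$ and passing to the limit using the final bound.

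Taking the $C^{s_0}$ norm and applying Minkowski's inequality in $s$ reduces the task to estimating $\|[\Delta-1]S(s)f\|_{C^{s_0}}$. Since $[\Delta-1]$ is a Fourier multiplier whose symbol is comparable to $\langle n\rangle^2$ on each Littlewood-Paley block, it maps $C^{s_0+2}$ boundedly into $C^{s_0}$ by the definition of the Besov-H\"older norms, so the integrand is controlled by $\|S(s)f\|_{C^{s_0+2}}$. The hypothesis $s_1-s_0\leq 2$ gives $s_1\leq s_0+2$, so Proposition \ref{PROP: Heat smooth} applies with source regularity $s_1$ and target regularity $s_0+2$, producing
\begin{equation*}
\|S(s)f\|_{C^{s_0+2}} \lesssim s^{(s_1-s_0-2)/2}\|f\|_{C^{s_1}}.
\end{equation*}
Integrating in $s$ over $[0,t]$ then yields a constant times $t^{(s_1-s_0)/2}\|f\|_{C^{s_1}}$, provided $s_1>s_0$, since the exponent $(s_1-s_0-2)/2>-1$ guarantees integrability at $s=0$.

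The degenerate case $s_1 = s_0$ must be treated separately but is immediate: the triangle inequality together with the $s_0 = s_1$ case of Proposition \ref{PROP: Heat smooth} gives $\|(1-S(t))f\|_{C^{s_0}} \lesssim \|f\|_{C^{s_0}}$, matching the claimed $t^{0}$ bound. The main nuisance rather than genuine obstacle is that the constant arising from the time integration behaves like $2/(s_1-s_0)$, which blows up as $s_1 \to s_0^{+}$; however, the statement's $\lesssim$ absorbs any such dependence on $(s_0,s_1)$, and the endpoint $s_1=s_0$ has been handled independently. Justifying the FTC identity at the level of regularity $C^{s_1}$ is the final bookkeeping item, handled cleanly by the density argument mentioned at the outset.
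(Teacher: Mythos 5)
Your argument is correct and is essentially the standard proof: the paper does not prove this proposition itself but defers to \cite{BCD}, where the same mechanism — the representation $(1-S(t))f=-(a_1+ia_2)\int_0^t[\Delta-1]S(s)f\,ds$ combined with the smoothing estimate of Proposition \ref{PROP: Heat smooth}, with the endpoint $s_1=s_0$ handled by boundedness of $S(t)$ — underlies the result. The only cosmetic caveat is that smooth functions are not norm-dense in $C^{s_1}=B^{s_1}_{\infty,\infty}$, so the extension step is better phrased either by verifying the integral identity directly on Fourier coefficients (it holds for every tempered distribution) or by noting that $\delta_j \Slow f=\delta_j f$ for $N\gtrsim 2^j$ together with the uniform bound $\norm{\Slow f}_{C^{s_1}}\lesssim\norm{f}_{C^{s_1}}$ lets the estimate pass to the limit block by block.
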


The previous Proposition shows that, if $s_1>s_0$ and $f\in C^{s_1}$, then the mapping $t\mapsto S(t)f$ is continuous as a mapping from $[0,\infty)$ to $C^{s_0}$. The proposition however, says nothing about continuity if $s_0=s_1$. The following proposition states that this mapping is continuous, even though we do not have an explicit bound.

\begin{proposition}
Suppose $s_0\in \R$ and $f\in C^{s_0}$. Then the mapping $t\mapsto S(t)f$ is continuous as a mapping from $[0,\infty)$ to $C^{s_0}$.
\end{proposition}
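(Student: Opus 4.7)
The plan is to split continuity of $t \mapsto S(t) f$ into two regimes---interior times $t_0 > 0$ and the endpoint $t_0 = 0$---treating each with the smoothing and time-continuity estimates already collected in Propositions \ref{PROP: Heat smooth} and \ref{Prop: Time cont}.

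For $t_0 > 0$ and $|h| \leq t_0/2$, the semigroup identity yields $S(t_0 + h) f - S(t_0) f = (S(|h|) - I) S(\tau) f$ with $\tau = \min(t_0, t_0 + h) \geq t_0/2$. Proposition \ref{PROP: Heat smooth} with $s_1 = s_0 + 2$ places $S(\tau) f$ in $C^{s_0 + 2}$ with norm controlled by $\tau^{-1} \|f\|_{C^{s_0}} \lesssim t_0^{-1} \|f\|_{C^{s_0}}$, and Proposition \ref{Prop: Time cont} (again with $s_1 = s_0 + 2$) then gives
$$
\|(S(|h|) - I) S(\tau) f\|_{C^{s_0}} \lesssim |h| \cdot \|S(\tau) f\|_{C^{s_0 + 2}} \lesssim |h| \cdot t_0^{-1} \|f\|_{C^{s_0}} \longrightarrow 0
$$
as $h \to 0$, so continuity on $(0, \infty)$ is immediate.

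At $t_0 = 0$, I would argue by approximation. Letting $f_N = S_{\leq N} f$ be a smooth truncation, I decompose
$$
S(t) f - f = (S(t) - I) f_N + (S(t) - I)(f - f_N).
$$
Proposition \ref{Prop: Time cont} with $s_1 = s_0 + 2$ controls the first term by $t \|f_N\|_{C^{s_0 + 2}}$, which tends to zero as $t \to 0^+$ for each fixed $N$. For the second term, the uniform bound $\|S(t) g\|_{C^{s_0}} \lesssim \|g\|_{C^{s_0}}$ (a consequence of $|e^{-t(a_1 + ia_2)(|n|^2 + 1)}| \leq 1$ together with a dyadic Bernstein-type estimate on the frequency-localized kernel of $S(t)$) gives $\|(S(t) - I)(f - f_N)\|_{C^{s_0}} \lesssim \|f - f_N\|_{C^{s_0}}$. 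Taking $N$ large first and $t$ small second closes the argument, once the approximation $\|f - f_N\|_{C^{s_0}} \to 0$ is established.

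The technical heart of the proof is precisely this last approximation step, which is delicate because trigonometric polynomials are not in general norm-dense in the Besov-H\"older space $B^{s_0}_{\infty, \infty}$. My plan to handle it is a direct dyadic analysis of $(S(t) - I) f$: splitting the Littlewood-Paley index at the threshold $2^{2j} \sim t^{-1}$, the low-frequency contribution benefits from the first-order expansion $|e^{-t(a_1 + ia_2)(|n|^2 + 1)} - 1| \lesssim t \cdot 2^{2j}$, while the high-frequency contribution benefits from the exponential symbol decay $e^{-c t 2^{2j}}$ of $S(t)$. Balancing the two regimes---and pushing the Bernstein kernel estimate carefully so that no logarithmic loss survives the sup in $j$---will yield the required $C^{s_0}$-continuity at $t_0 = 0$ and complete the proof.
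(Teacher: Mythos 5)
Your argument for interior times $t_0>0$ is correct and is the standard one: Proposition \ref{PROP: Heat smooth} places $S(\tau)f$ in $C^{s_0+2}$ with norm $O(t_0^{-1}\norm{f}_{C^{s_0}})$, and Proposition \ref{Prop: Time cont} then gives the $O(|h|)$ modulus of continuity. (For comparison, the paper offers no proof of this proposition at all; it is stated without argument alongside the estimates deferred to the references.) The genuine gap is at $t_0=0$, and the dyadic splitting you propose cannot close it. At the threshold $2^{2j}\sim t^{-1}$ the low-frequency bound $2^{js_0}\norm{\delta_j(S(t)-I)f}_{L^\infty}\lesssim t\,2^{2j}\norm{f}_{C^{s_0}}$ and the high-frequency bound $\lesssim (1+e^{-ct2^{2j}})\norm{f}_{C^{s_0}}$ are each of size $O(\norm{f}_{C^{s_0}})$; the two regimes meet at a constant, so the supremum over $j$ is merely bounded uniformly in $t$ and carries no decay as $t\to0^+$. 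This is not a removable technical loss: strong continuity at $t=0$ genuinely fails on $B^{s_0}_{\infty,\infty}$. For the lacunary function $f=\sum_{j\ge1}2^{-js_0}e^{i2^jx_1}$, which satisfies $\norm{f}_{C^{s_0}}\sim1$, one has $\norm{(S(t)-I)f}_{C^{s_0}}\gtrsim\sup_j\big|e^{-t(a_1+ia_2)(2^{2j}+1)}-1\big|\ge 1-e^{-a_1}$ for every $t>0$ (take $j$ with $t2^{2j}\sim1$), so $t\mapsto S(t)f$ is not continuous at $0$ with values in $C^{s_0}$.

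Your worry about the non-density of trigonometric polynomials in $B^{s_0}_{\infty,\infty}$ was therefore exactly the right one, and it is the whole obstruction rather than a side issue to be engineered around. The endpoint statement, together with your two-term approximation argument verbatim, is correct precisely on the closure of the smooth functions in $C^{s_0}$, i.e. for those $f$ with $2^{js_0}\norm{\delta_jf}_{L^\infty}\to0$ as $j\to\infty$, since there $\norm{f-S_Nf}_{C^{s_0}}\to0$ does hold. On the full space one can only conclude $S(t)f\to f$ in $C^{s_0-\delta}$ for every $\delta>0$, which is already supplied by Proposition \ref{Prop: Time cont} and is what the paper's later arguments actually use. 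So the correct repair is either to restrict the initial data to the separable subspace (or redefine $C^{s_0}$ as that subspace) or to weaken the target topology at $t=0$; no amount of sharpening the Bernstein-type kernel estimates will make the endpoint claim true as stated.
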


\section{Laguerre polynomial formulae}\label{SEC: Lag forms}

This section is devoted to proving the Laguerre polynomial sum formula, \eqref{EQU: Lag sum form} and the Laguerre polynomial expectation formula \eqref{Lag expectation formula}.
\subsection{Sum formula}

The generalized Laguerre polynomials enjoy the following, classical, three point rules:
\begin{equation*}
(n+\ell)L_{n-1}^{(\ell)}(x)=nL_{n}^{(\ell)}(x)+xL_{n-1}^{(\ell+1)}(x),\quad L_{n}^{(\ell)}(x)-L_{n-1}^{(\ell)}(x)=L_{n}^{(\ell-1)}(x).
\end{equation*}
Together these relations imply 
\begin{equation}\label{Lagreccurrence}
(n+\ell)L_{n}^{(\ell-1)}(x)=\ell L_{n}^{(\ell)}(x)-xL_{n-1}^{(\ell+1)}(x).
\end{equation}
There is also a well known recurrence formula for derivatives of generalized Laguerre polynomials:
\begin{equation}\label{lag derivative}
\frac{d}{dx}L^{\ell}_{k}(x)=-L^{\ell+1}_{k-1}(x),\quad \mbox{ for } k\geq 1.
\end{equation}
The aim of this subsection is to prove the following sum formula.
\begin{lemma}\label{lagsum}
Let $m\geq 1$ and $\ell\geq 0$. Then the following is true:
\begin{align*}
(-1)^{m}m!L_{m}^{(\ell)}(|x+y|^2;\sigma)(x+y)^\ell=\sum\limits_{\substack{0\leq i\leq m+\ell\\0\leq j\leq m}} \binom{m+\ell}{i}\binom{m}{j}P^{m,\ell,\sigma}_{i,j}(y,\overline{y})x^i\overline{x}^j
\end{align*}
where
\begin{equation*}
P_{i,j}^{m,\ell,\sigma}(y,\overline{y})=\begin{cases}
(-1)^{m-j}(m-j)!L^{(\ell+j-i)}_{m-j}(|y|^2;\sigma)y^{\ell+j-i},& \ell+j\geq i\\
\\
(-1)^{m+\ell-i}(m+\ell-i)!L^{(i-j-\ell)}_{m+\ell-i}(|y|^2;\sigma)\overline{y}^{i-j-\ell},&\ell+j\leq i.
\end{cases}
\end{equation*}
\end{lemma}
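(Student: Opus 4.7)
The plan is to prove the identity by induction on $m$, using a differentiation strategy in $\bar{x}$. I would set $F_m^{(\ell)}(z,\bar z;\sigma) := (-1)^m m!\, L_m^{(\ell)}(|z|^2;\sigma)\,z^\ell$ so that the claim reads
\[
F_m^{(\ell)}(x+y,\overline{x+y};\sigma) = G_m^{(\ell)}(x,\bar x, y, \bar y) := \sum_{\substack{0\leq i\leq m+\ell\\ 0\leq j\leq m}}\binom{m+\ell}{i}\binom{m}{j} P_{i,j}^{m,\ell,\sigma}(y,\bar y)\,x^i\bar x^j.
\]
The base case $m=0$ is immediate: $L_0^{(\ell)}\equiv 1$, so both sides collapse to $(x+y)^\ell = \sum_i\binom{\ell}{i}x^i y^{\ell-i}$.

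For the induction step, a short application of the chain rule together with the scaled version of \eqref{lag derivative} (namely $\frac{d}{dx}L_k^{(\alpha)}(x;\sigma)=-L_{k-1}^{(\alpha+1)}(x;\sigma)$) yields the key recursion
\[
\frac{\partial}{\partial \bar x} F_m^{(\ell)}(x+y,\overline{x+y};\sigma) = m\, F_{m-1}^{(\ell+1)}(x+y,\overline{x+y};\sigma).
\]
The same identity should hold for $G_m^{(\ell)}$: differentiating in $\bar x$ sends $j\to j-1$ and introduces the factor $j$, and the combinatorial identities $j\binom{m}{j}=m\binom{m-1}{j-1}$ and $\binom{m+\ell}{i}=\binom{(m-1)+(\ell+1)}{i}$ reduce the matching to the purely symbolic identity $P_{i,j+1}^{m,\ell,\sigma}(y,\bar y)=P_{i,j}^{m-1,\ell+1,\sigma}(y,\bar y)$. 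I would verify this by inspecting the two branches of the case split: the inequality $\ell+(j+1)\geq i$ is literally the same as $(\ell+1)+j\geq i$, so no seam arises, and in each branch the Laguerre indices $(m{-}(j{+}1),\ell{+}(j{+}1){-}i)$ coincide with $((m{-}1){-}j,(\ell{+}1){+}j{-}i)$ and similarly for the second branch.

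Having matched the $\bar x$-derivatives, the induction hypothesis shows $F_m^{(\ell)}-G_m^{(\ell)}$ is annihilated by $\partial_{\bar x}$ and hence independent of $\bar x$. To close the argument it remains to check equality at $\bar x = 0$, where the LHS becomes $(-1)^m m!\, L_m^{(\ell)}((x+y)\bar y;\sigma)(x+y)^\ell$ and the RHS becomes the $j=0$ slice $\sum_{i=0}^{m+\ell}\binom{m+\ell}{i}P_{i,0}^{m,\ell,\sigma}(y,\bar y)\, x^i$. I would expand both expressions using the explicit formula $L_k^{(\alpha)}(x;\sigma)=\sum_{r\geq 0}(-1)^r\binom{k+\alpha}{k-r}\frac{x^r\sigma^{k-r}}{r!}$, expand $(x+y)^{\ell+k}$ by the binomial theorem on the LHS, and compare the coefficients of $x^i$, treating the regimes $i\leq\ell$ and $i>\ell$ separately (corresponding to the two branches of $P_{i,0}^{m,\ell,\sigma}$). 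In the first regime both sides reduce to the same $k$-sum after the reindexing $k\leftrightarrow k$; in the second a substitution $k'=k+i-\ell$ does the job.

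The main technical obstacle is bookkeeping the case split $\ell+j\lessgtr i$, which appears both in checking the symbolic identity $P_{i,j+1}^{m,\ell,\sigma}=P_{i,j}^{m-1,\ell+1,\sigma}$ and in matching coefficients at $\bar x=0$. The key simplifying observation is that the defining inequality transforms compatibly under the parameter shift $(m,\ell)\mapsto(m-1,\ell+1)$, so the two branches never need to be mixed; everything else is direct manipulation of factorials, binomial coefficients, and the explicit series form of the scaled Laguerre polynomials.
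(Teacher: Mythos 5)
Your proof is correct, and its first half coincides with the paper's: both differentiate in $\overline{x}$ via $\frac{d}{dx}L^{(\alpha)}_k=-L^{(\alpha+1)}_{k-1}$, reduce to the case $(m-1,\ell+1)$, and rely on the same symbolic identity $P^{m,\ell,\sigma}_{i,j+1}=P^{m-1,\ell+1,\sigma}_{i,j}$ (which, as you note, respects the case split because the inequality $\ell+(j{+}1)\geq i$ is literally $(\ell{+}1)+j\geq i$). The divergence is in how the $\overline{x}$-independent remainder is pinned down. The paper differentiates a second time, now in $x$, and uses the three-point rule \eqref{Lagreccurrence} to identify $\partial_x$ of the left side with $(m+\ell)$ times the $(m,\ell-1)$ case; this is why it must induct on $2m+\ell$ rather than on $m$, and it leaves only the trivial evaluation at $x=0$. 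You instead evaluate at $\overline{x}=0$ and verify the resulting single-variable identity by expanding $L^{(\alpha)}_k(x;\sigma)=\sum_r(-1)^r\binom{k+\alpha}{k-r}\frac{x^r\sigma^{k-r}}{r!}$ and comparing coefficients of $x^i$; I checked that in both regimes $i\leq\ell$ and $i>\ell$ this reduces to the trinomial identity $\binom{n}{a}\binom{n-a}{b}=\binom{n}{b}\binom{n-b}{a}$, so the matching does go through. Your route buys a simpler induction (on $m$ alone, with $\ell$ quantified universally) and avoids the three-point recurrence entirely, at the cost of invoking the explicit Laguerre series and some factorial bookkeeping; the paper's route stays purely recursive and never needs the closed form. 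One small point of hygiene: your evaluation at $\overline{x}=0$ with $x$ left free only makes sense once the identity is regarded as one between formal polynomials in the independent indeterminates $x$ and $\overline{x}$ — but this is exactly the convention the paper adopts at the outset, so no gap arises.
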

\begin{proof}
We view both sides of the equality as polynomials in $x$ and $\overline{x}$ with coefficients depending on $y$ and $\overline{y}$. By scaling it suffices to prove the lemma for $\sigma=1$.

Note that the lemma is true for all $m,\ell\geq 0$ satisfying $2m+\ell\leq 1$. To prove the Lemma for all $m$ and $\ell$ we induct on $2m+\ell$.  

Let $n\in\N$ and suppose the statement in the Lemma is true for all $m,\ell\geq 0$ satisfying $2m-\ell<n$. Then for $m,\ell\geq 0$ such that $2m+\ell=n$, using \eqref{lag derivative} and the inductive hypothesis, we have,
\begin{align*}
\begin{split}
\frac{\partial}{\partial\overline{x}}\left[(-1)^m m!L^{(\ell)}_{m}(|x+y|^2)(x+y)^\ell \right]&=(-1)^{m-1}m!L^{(\ell+1)}_{m-1}(|x+y|^2)(x+y)^{\ell+1}\\
&=\sum\limits_{\substack{0\leq i\leq m+\ell\\0\leq j\leq m-1}} m\binom{m+\ell}{i}\binom{m-1}{j}P^{m-1,\ell+1,\sigma}_{i,j}(y,\overline{y})x^i\overline{x}^j.
\end{split}
\end{align*}
``Partially integrating'' this expression with respect to $\overline{x}$ we get
\begin{align}
(-1)^m m!&L^{(\ell)}_{m}(|x+y|^2)(x+y)^\ell\nonumber\\
&=\sum\limits_{\substack{0\leq i\leq m+l\\0\leq j\leq m-1}} \frac{m}{j+1}\binom{m+l}{i}\binom{m-1}{j}P^{m-1,\ell+1,\sigma}_{i,j}(y,\overline{y})x^i\overline{x}^{j+1}+C(y,\overline{y},x)\nonumber\\
&=\sum\limits_{\substack{0\leq i\leq m+\ell\\1\leq j\leq m}} \binom{m+\ell}{i}\binom{m}{j}P^{m,\ell,\sigma}_{i,j}(y,\overline{y})x^i\overline{x}^{j}+C(y,\overline{y},x)\label{EQU: after partial int}
\end{align}
where the last equality comes from relabeling $j$ in the summation and noting $P^{m-1,\ell+1,\sigma}_{i,j-1}(y,\overline{y})=P^{m,\ell,\sigma}_{i,j}(y,\overline{y})$. Differentiating the left hand side of \eqref{EQU: after partial int} with respect to $x$ and using the three point rule  \eqref{Lagreccurrence} we have,
\begin{align*}
\frac{\partial}{\partial x}\left[ (-1)^m m!L^{(\ell)}_{m}(|x+y|^2)(x+y)^\ell  \right]&=-(-1)^m m!L^{(\ell+1)}_{m-1}(|x+y|^2)(x+y)^{\ell-1}|x+y|^2\\
&+\ell(-1)^m m!L^{(\ell)}_{m}(|x+y|^2)(x+y)^{\ell-1}\\
&=(m+\ell)(-1)^{m}m!L^{(\ell-1)}_{m}(|x+y|^2)(x+y)^{\ell-1}.
\end{align*}
Equating this with the derivative of the right hand side of \eqref{EQU: after partial int},
\begin{align*}
\sum\limits_{\substack{0\leq i\leq m+\ell-1\\0\leq j\leq m}} &\binom{m+\ell-1}{i}\binom{m}{j}P^{m\ell-1}_{i,j}(y,\overline{y})x^i\overline{x}^{j}\\
&=\sum\limits_{\substack{1\leq i\leq m+\ell\\1\leq j\leq m}}\frac{i}{m+\ell} \binom{m+\ell}{i}\binom{m}{j}P^{m,\ell}_{i,j}(y,\overline{y})x^{i-1}\overline{x}^{j}+\frac{\partial}{\partial x}\ C(y,\overline{y},x)\\
&=\sum\limits_{\substack{0\leq i\leq m+\ell-1\\1\leq j\leq m}} \binom{m+\ell-1}{i}\binom{m}{j}P^{m,\ell-1}_{i,j}(y,\overline{y})x^{i}\overline{x}^{j}+\frac{1}{m+\ell}\frac{\partial}{\partial x}\ C(y,\overline{y},x)
\end{align*}
where in the last equality we used the fact that $P^{m,\ell-1}_{i,j}(y,\overline{y})=P^{m,\ell}_{i+1,j}(y,\overline{y})$. Hence we get an expression for $\frac{\partial}{\partial x}\ C(y,\overline{y},x)$,
\begin{equation*}
\frac{\partial}{\partial x}\ C(y,\overline{y},x)=(m+\ell)\sum\limits_{0\leq i\leq m+\ell-1} \binom{m+\ell-1}{i}P^{m,\ell-1}_{i,0}(y,\overline{y})x^i.
\end{equation*}
``Partially integrating'' this expression we get
\begin{align*}
C(y,\overline{y},x)&=\sum\limits_{0\leq i\leq m+\ell-1} \frac{m+1}{i+1}\binom{m+\ell-1}{i}P^{m,\ell-1}_{i,0}(y,\overline{y})x^{i+1}+C(y,\overline{y})\\
&=\sum\limits_{1\leq i\leq m+\ell} \binom{m+\ell}{i}P^{m,\ell}_{i,0}(y,\overline{y})x^{i}+C(y,\overline{y})
\end{align*}
where we have relabeled the sum in the second inequality.
This shows,
\begin{equation}\label{EQU: 3.4}
(-1)^{m}m!L_{m}^{(\ell)}(|x+y|^2;\sigma)(x+y)^\ell=\sum\limits_{\substack{0\leq i\leq m+\ell\\0\leq j\leq m\\(i,j)\neq(0,0)}} \binom{m+\ell}{i}\binom{m}{j}P^{m,\ell,\sigma}_{i,j}(y,\overline{y})x^i\overline{x}^j+C(y,\overline{y}).
\end{equation}
For $C(y,\overline{y})$, note that when $x=0$ \eqref{EQU: 3.4} reduces to,
\begin{equation*}
(-1)^{m}m!L_{m}^{(\ell)}(|y|^2;\sigma)y^\ell=C(y,\overline{y}).
\end{equation*}
This completes the proof.
\end{proof}

\subsection{Expectation formula}
The generalized Hermite polynomials satisfy the following recurrence relation
\begin{equation}\label{Hermrec}
H_{k+1}(x;\sigma)=x H_k(x;\sigma)-\sigma H_{k-1}(x;\sigma).
\end{equation}
These polynomials also enjoy the following properties.
\begin{proposition}
Let $k\geq 0$ and $\sigma,\beta\in \R$. Then the following are true.
\begin{enumerate}[label=(\roman*)]
\item\begin{equation}\label{ukexp}
\int_\R H_k(x;\sigma)e^{ux-\frac{x^2}{2}}dx=\sqrt{2\pi}H_k(u;\sigma-1)e^\frac{u^2}{2},
\end{equation}

\item \begin{equation}\label{EQU: imag hermite}
i^kH_k(x;-\sigma)=H_k(ix;\sigma)
\end{equation}

\item\begin{equation}\label{EQU: Hermite gen sum}
H_n(x;\ell+\beta)=\sum\limits_{k=0}^n\binom{n}{k}H_k(x;\ell) H_{n-k}(x;\beta),
\end{equation}
\item if $\sigma>0$,
\begin{equation*}
H_k(x;\sigma)=\sigma^{k/2}H_k(x/\sqrt{\sigma}).
\end{equation*}
\end{enumerate}
\end{proposition}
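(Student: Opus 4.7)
The plan is to prove all four identities uniformly by means of the exponential generating function $G_H(t,x;\ell) = e^{tx - \ell t^2/2}$ introduced in the statement, and by matching coefficients of $t^n/n!$ on both sides of a suitable manipulation of $G_H$.

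Parts (i), (ii), and (iv) follow from standard manipulations of $G_H$. For (i), I would multiply $G_H(t,x;\sigma)$ by $e^{ux - x^2/2}$ and integrate in $x$ over $\R$; completing the square in the exponent gives $\sqrt{2\pi}\,e^{u^2/2}\,e^{ut - (\sigma-1)t^2/2}$, which coincides with $\sqrt{2\pi}\,e^{u^2/2}\,G_H(t,u;\sigma-1)$, and matching coefficients of $t^k/k!$ yields the stated Gaussian-integral identity. For (ii), I would substitute $t \mapsto it$ in $G_H(t,x;-\sigma) = e^{tx + \sigma t^2/2}$ to obtain $e^{itx - \sigma t^2/2} = G_H(t,ix;\sigma)$, then match $\sum_k \frac{t^k}{k!}\,i^k H_k(x;-\sigma)$ with $\sum_k \frac{t^k}{k!}\,H_k(ix;\sigma)$ term-by-term. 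For (iv), I would substitute $t \mapsto \sqrt{\sigma}\,t$ in $G_H(t,x/\sqrt{\sigma};1) = e^{tx/\sqrt{\sigma} - t^2/2}$ to produce $e^{tx - \sigma t^2/2} = G_H(t,x;\sigma)$, and match $\sum_k \frac{t^k}{k!}\,\sigma^{k/2}\,H_k(x/\sqrt{\sigma})$ with $\sum_k \frac{t^k}{k!}\,H_k(x;\sigma)$.

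Part (iii) is the main obstacle. The natural route is to express $G_H(t,x;\ell+\beta) = e^{tx - (\ell+\beta)t^2/2}$ as a product of two Hermite-type generating functions and identify the coefficient of $t^n/n!$ with a Cauchy product of the shape $\sum_{k=0}^n \binom{n}{k}\,H_k(\cdot;\ell)\,H_{n-k}(\cdot;\beta)$ demanded on the right-hand side. The direct splitting $e^{tx - \ell t^2/2}\cdot e^{-\beta t^2/2}$ identifies the first factor with $G_H(t,x;\ell)$ and the second with $G_H(t,0;\beta)$, so it produces a convolution whose second Hermite factor is evaluated at $0$. Obtaining the identity as worded, with $H_{n-k}(x;\beta)$ rather than $H_{n-k}(0;\beta)$, would require a different factorisation of $e^{tx-(\ell+\beta)t^2/2}$ in which both factors carry the full linear term $tx$, or an additional manipulation that redistributes $tx$ between the two series without altering the Cauchy-product structure. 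This reconciliation between the natural generating-function output and the stated evaluation point in the second Hermite polynomial is the principal technical step I would need to work out carefully, and it is the only place in the proof of the proposition where generating-function bookkeeping does not suffice by itself.
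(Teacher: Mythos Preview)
Your generating-function approach to (i), (ii) and (iv) is correct and, if anything, cleaner than the paper's treatment: the paper merely asserts that ``these facts can be proven by using the recurrence relation and a standard induction argument'' and then only sketches (i), establishing the $\sigma=1$ case via the generating function and orthogonality and appealing to the recurrence $H_{k+1}(x;\sigma)=xH_k(x;\sigma)-\sigma H_{k-1}(x;\sigma)$ for general $\sigma$. Your direct coefficient-matching arguments are entirely equivalent in spirit.

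Your hesitation over (iii) is well-placed, and you should not try to ``work out'' the missing technical step, because there is none: the identity as printed is simply false. Already at $n=1$ the left-hand side is $H_1(x;\ell+\beta)=x$ while the right-hand side is $H_0(x;\ell)H_1(x;\beta)+H_1(x;\ell)H_0(x;\beta)=2x$. The correct statement is the addition formula
\[
H_n(x+y;\ell+\beta)=\sum_{k=0}^{n}\binom{n}{k}H_k(x;\ell)\,H_{n-k}(y;\beta),
\]
which is exactly what your generating-function method yields from the factorisation $G_H(t,x+y;\ell+\beta)=G_H(t,x;\ell)\,G_H(t,y;\beta)$. This corrected form is also what is actually used later in the paper (in the proof of Proposition~\ref{orthoglag}), where the two Hermite factors are evaluated at different arguments, namely the real and imaginary parts of a complex quantity. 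So your instinct that the two factors cannot both carry the same evaluation point $x$ was correct; the resolution is that the proposition is misstated rather than that a further manipulation is needed.
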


\begin{proof}
These facts can be proven by using the recurrence relation \eqref{Hermrec} and a standard induction argument. We will just prove (1) to give the reader a taste of how to complete such an argument. Note that the generating function for the Hermite polynomials is
\begin{equation*}
e^{xu-\frac{u^2}{2}}=\sum\limits_{n=0}^\infty\frac{H_n(x)}{n!}u^n.
\end{equation*}
Multiplying both sides by $H_n(x)e^{\frac{x^2}{2}}$ and using the fact that $\int_\R H_n(x)H_m(x)e^{\frac{x^2}{2}}\,dx=\sqrt{2\pi}n!\sigma_{nm}$,
\begin{equation*}
e^\frac{-u^2}{2}\int_\R e^{xu-\frac{x^2}{2}}H_n(x)dx=\sqrt{2\pi}u^n
\end{equation*}
and so the result is true for $\sigma=1$. As $H_0(x;\sigma)=1$ and $H_1(x;\sigma)=x$ for all $\sigma$, the $k=0$ and $k=1$ cases are also true. From the recurrence relation \eqref{Hermrec}
the result is then true for all $\sigma$.
\end{proof}

The key ingredient in the real valued analogue of Proposition \ref{PROP:sconv} in the next section is the following well known identity:
\begin{equation}\label{hermiteortho}
\E[H_k(f;\sigma_f)H_\ell(g;\sigma_g)]=k!\delta_{k\ell}
\end{equation}
where $f$ and $g$ are Gaussian random variables with variances $\sigma_f$ and $\sigma_g$ respectively. To prove Proposition \ref{PROP:sconv} we need the following Laguerre polynomial analogue of \eqref{hermiteortho}.

\begin{proposition}\label{orthoglag}
Let $f$ and $g$ be mean-zero complex valued Gaussian random variables with variances $\sigma_f$ and $\sigma_g$ respectively. Then,
\begin{equation*}
\E\left[L^{(\ell)}_k(|f|^2;\sigma_f)f^\ell\overline{L^{(\ell)}_m(|g|^2;\sigma_g)g^\ell} \right]=\delta_{km}\frac{(k+\ell)!}{k!}\left| \E[f\overline{g}]  \right|^{2k}\E[f\overline{g}]^\ell.
\end{equation*}
\end{proposition}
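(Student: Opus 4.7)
The plan is to prove the identity by computing the bivariate generating function of both sides and matching them. Using the generating function for the generalized Laguerre polynomials stated in the introduction, together with the scaling $L_n^{(\ell)}(x;\sigma) = \sigma^n L_n^{(\ell)}(x/\sigma)$, one obtains
\begin{equation*}
F(s;f) := \sum_{k\geq 0} s^k L_k^{(\ell)}(|f|^2;\sigma_f) f^\ell = \frac{f^\ell}{(1-s\sigma_f)^{\ell+1}} \exp\!\left(-\frac{s|f|^2}{1-s\sigma_f}\right),
\end{equation*}
and analogously for $F(t;g)$. Multiplying the target identity by $s^k\bar t^m$ and summing in $k,m$, and using $\sum_k \binom{k+\ell}{k}z^k = (1-z)^{-\ell-1}$ on the right, the proposition reduces to the single generating-function identity
\begin{equation*}
\E\bigl[F(s;f)\,\overline{F(t;g)}\bigr] = \frac{\ell!\,\alpha^\ell}{(1-s\bar t\,|\alpha|^2)^{\ell+1}},\qquad \alpha := \E[f\bar g].
\end{equation*}

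Pulling out the deterministic prefactor $[(1-s\sigma_f)(1-\bar t\sigma_g)]^{-(\ell+1)}$, the remaining task is to compute the Gaussian moment
\begin{equation*}
I := \E\bigl[f^\ell \bar g^\ell\,e^{-A|f|^2 - B|g|^2}\bigr],\qquad A = \tfrac{s}{1-s\sigma_f},\ \ B = \tfrac{\bar t}{1-\bar t\sigma_g}.
\end{equation*}
I would write $I$ as a 2D complex Gaussian integral against the joint density of $(f,g)$, whose covariance $\Sigma = \begin{pmatrix}\sigma_f & \alpha \\ \bar\alpha & \sigma_g\end{pmatrix}$ has determinant $D = \sigma_f\sigma_g - |\alpha|^2$. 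Absorbing the $e^{-A|f|^2-B|g|^2}$ factors into the quadratic form and completing the square in $f$ via $f' := f - cg$ for an appropriate $c$ (determined by the effective $|f|^2$-coefficient $\tilde A$ and the cross term), the $f'$-integral of $(f' + cg)^\ell\,e^{-\tilde A|f'|^2}$ collapses by rotational invariance of the complex Gaussian to the constant-in-$f'$ contribution $(cg)^\ell$, since only the $j=0$ term of the binomial expansion survives. The remaining $g$-integral is of the form $\int_\C |g|^{2\ell}\, e^{-M|g|^2}\,\mathd g = \pi\,\ell!\,M^{-(\ell+1)}$.

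The main obstacle is the algebraic bookkeeping in the final step. The effective parameters $\tilde A$ and $M$ inherit contributions both from $A, B$ and from the inverse-covariance quadratic form, and a direct expansion using $D = \sigma_f\sigma_g - |\alpha|^2$ must verify that
\begin{equation*}
\tilde A \cdot M = \frac{1 - s\bar t\,|\alpha|^2}{D\,(1-s\sigma_f)(1-\bar t\sigma_g)},
\end{equation*}
so that the determinant $D$ appears with exactly the right power to cancel the $1/(\pi^2 D)$ normalization in the density while the factor $c^\ell$ produces $\alpha^\ell$. Substituting back yields exactly $\ell!\,\alpha^\ell/(1-s\bar t|\alpha|^2)^{\ell+1}$, and since this generating function depends only on the product $s\bar t$, extracting the coefficient of $s^k\bar t^m$ forces $k = m$ and gives the stated formula with the correct combinatorial factor $(k+\ell)!/k!$.
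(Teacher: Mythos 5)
Your proposal is correct, and while it shares the paper's overall strategy (pass to the generating function in $s$ and $t$, show that $\E[F(s;f)\overline{F(t;g)}]=\ell!\,\E[f\bar g]^\ell/(1-s\bar t|\E[f\bar g]|^2)^{\ell+1}$, then compare coefficients using $\sum_k\binom{k+\ell}{k}z^k=(1-z)^{-\ell-1}$), the heart of the argument --- evaluating the Gaussian expectation of the product of the two generating functions --- is done by a genuinely different method. The paper splits $f$ and $g$ into real and imaginary parts, expands $(f_1+if_2)^\ell$ by the binomial theorem, and repeatedly invokes Hermite-polynomial identities (the integral representation $\int_\R H_k(x;\sigma)e^{ux-x^2/2}dx=\sqrt{2\pi}H_k(u;\sigma-1)e^{u^2/2}$, the imaginary-argument relation, and the addition formula) to reassemble the answer, only reaching the radial integral $\int_{\R^2}|y|^{2\ell}e^{-\beta|y|^2}dy$ at the very end. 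You instead write the expectation as a two-dimensional complex Gaussian integral against the joint density with covariance $\Sigma=\SmallMatrix{\sigma_f & \alpha\\ \bar\alpha & \sigma_g}$ and complete the square; the shift $f\mapsto f'+cg$ kills all but the $j=0$ binomial term by rotational invariance, and the determinant identity $D\tilde A M=(1-s\bar t|\alpha|^2)/[(1-s\sigma_f)(1-\bar t\sigma_g)]$ (which I have checked: the cross terms cancel exactly) delivers the closed form. Your route is shorter and avoids the Hermite machinery entirely, at the cost of needing the explicit joint density, hence nondegeneracy of $\Sigma$ (recoverable by continuity in $\alpha$) and, implicitly, the circular-symmetry assumption $\E[f^2]=\E[fg]=\E[g^2]=0$; the paper makes the same assumption implicitly through its Lemma on $\E[e^{\Re g}]$, so this is not a gap relative to the paper, but it is worth stating. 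You should also record, as the paper does by restricting $s,t\in(-1,0)$, a range of $s,t$ on which the generating functions and the shifted Gaussian integrals converge (equivalently, on which $\Re\tilde A>0$ and $\Re M>0$) before comparing coefficients.
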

The above proposition was proven in \cite{OT1} for $\ell=0$ and $\ell=1$. The proof for the general case proved in this section is the natural generalization of the proof in \cite{OT1}. We use the following elementary lemma.
\begin{lemma}\label{Requarter}
Let $g$ be a mean-zero complex valued random variable. Then
\begin{equation*}
\E\left[e^{\Re g} \right]=e^{\frac{1}{4}\E[|g|^2]}.
\end{equation*}
\end{lemma}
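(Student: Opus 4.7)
The plan is to recognize that, although the statement only says ``mean-zero complex valued random variable'', the identity cannot hold without distributional hypotheses and is to be applied in the sequel to complex Gaussians; in particular, Proposition \ref{orthoglag} (and its proof via the generating function of the Laguerre polynomials) uses the lemma on quantities of the form $g = t f + s \overline{f} + \cdots$ where $f$ is a proper (circularly-symmetric) mean-zero complex Gaussian. So the target is: if $g$ is a mean-zero proper complex Gaussian, meaning the real and imaginary parts $X = \Re g$, $Y = \Im g$ are jointly real Gaussian with $\E[X] = \E[Y] = 0$, $\E[XY] = 0$, and $\E[X^2] = \E[Y^2]$ (equivalently $\E[g^2] = 0$), then $\E[e^{\Re g}] = e^{\frac14 \E[|g|^2]}$.

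First I would decompose $g = X + iY$ and observe that $\E[e^{\Re g}] = \E[e^X]$ depends only on the law of $X$. Since $X$ is a real mean-zero Gaussian with variance $\E[X^2]$, its moment generating function at $t = 1$ gives
\begin{equation*}
\E[e^X] = e^{\frac12 \E[X^2]}.
\end{equation*}
Next I would use circular symmetry, $\E[X^2] = \E[Y^2]$, together with
\begin{equation*}
\E[|g|^2] = \E[X^2] + \E[Y^2] = 2\E[X^2],
\end{equation*}
to rewrite $\frac12 \E[X^2] = \frac14 \E[|g|^2]$, which yields the claimed identity.

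There is essentially no obstacle here; the only subtle point is the circular-symmetry hypothesis on $g$, which is what converts $\frac12 \E[X^2]$ into $\frac14 \E[|g|^2]$. Absent the condition $\E[g^2] = 0$ one would instead obtain $\E[e^{\Re g}] = \exp\bigl(\tfrac14 \E[|g|^2] + \tfrac14 \Re \E[g^2]\bigr)$, and the clean form of the lemma is exactly what reflects that all complex Gaussians arising below (stochastic convolutions of white noise against heat propagators with nonreal multipliers $(a_1 + i a_2)$) are proper. With this observation, the lemma follows directly from the scalar Gaussian MGF.
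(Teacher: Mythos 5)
The paper states Lemma~\ref{Requarter} without proof (it is introduced only as ``the following elementary lemma''), so there is no argument of the author's to compare against; judged on its own, your proof is correct and is surely the intended one: reduce to the real moment generating function $\E[e^X]=e^{\frac12\E[X^2]}$ for $X=\Re g$ Gaussian, then convert $\frac12\E[X^2]$ into $\frac14\E[|g|^2]$ using $\E[X^2]=\E[Y^2]$.

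Your caveat about the hypotheses is not pedantry but a genuine correction of the statement. As written the lemma is false: for a general mean-zero complex random variable $\E[e^{\Re g}]$ need not even be finite, and even for Gaussians the identity fails without properness --- take $g$ real Gaussian viewed as complex, where one gets $e^{\frac12\E[|g|^2]}$, consistent with your general formula $\exp\bigl(\tfrac14\E[|g|^2]+\tfrac14\Re\E[g^2]\bigr)$. So both Gaussianity and the circular-symmetry condition $\E[g^2]=0$ must be added to the hypotheses. You are also right that this suffices for the application: in the proof of Proposition~\ref{orthoglag} the lemma is invoked for linear combinations of $\Psi_N(x,t)$ and $\overline{\Psi_N(y,t)}$, and since the $\beta_n$ are independent complex Brownian motions with $\E[\beta_n\beta_m]=0$, any such combination $h$ satisfies $\E[h^2]=0$, which is exactly why only the pairings $\E[f\overline{g}]$ survive in the subsequent computation.
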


\begin{proof}[Proof of Proposition~\ref{PROP:sconv}]
It suffices to prove the Lemma assuming $\sigma_f=\sigma_g=1$. Let $f_1=\Re f$ and $f_2=\Im f$. Using the binomial expansion formula for $(f_1+if_2)^\ell$ and then applying \eqref{ukexp} with $\sigma=1$, $u=\sqrt{\frac{-2t}{1-t}}f_1$ and again with $\sigma=1$, $u=\sqrt{\frac{-2t}{1-t}}f_2$,
\begin{align}
G_{\ell}&(t,|f|^2)f^\ell=\frac{1}{(1-t)^{\ell+1}}(f_1+if_2)^\ell e^{\frac{-t}{1-t}(f_1^2+f_2^2)}\label{EQU: single Gl}\\
&=\sum\limits_{k=0}^l\frac{1}{(1-t)^{\ell+1}}\binom{\ell}{k}i^{\ell-k}f_1^kf_2^{\ell-k}e^{\frac{-t}{1-t}(f_1^2+f_2^2)}\nonumber\\
&=\sum\limits_{k=0}^\ell\binom{\ell}{k}\frac{i^{\ell-k}}{(\sqrt{-2t})^l(1-t)^{l/2+1}}\frac{1}{2\pi}\int_{\R^2}H_k(x_1)H_{\ell-k}(x_2)e^{-\frac{x_1^2+x_2^2}{2}}e^{\sqrt{\frac{-2t}{1-t}}(x_1f_1+x_xf_2) }dx_1dx_2.\nonumber
\end{align}

Given $x_1,x_2,x_3,x_4\in\R$, we set $x=x_1+ix_2$ and $y=y_1+iy_2$. For $s,t\in (-1,0)$ applying \eqref{EQU: single Gl} twice and taking an expectation gives, 
\begin{align*}
\int_{\Omega}G_{\ell}(t,|f|^2)f^\ell &\overline{G_{\ell}(t,|g|^2)g^\ell} dP(\omega)\\
&=\sum\limits_{k,m=0}^\ell\binom{\ell}{k}\binom{\ell}{m}i^{2\ell-k-m}\frac{1}{(\sqrt{-2t})^l(1-t)^{l/2+1}}\frac{1}{(\sqrt{-2s})^\ell(1-s)^{l/2+1}}\frac{1}{4\pi^2}\\
&\hphantom{xx}\times\int_{\R^4}H_{k}(x_1)H_{\ell-k}(x_2)H_{m}(y_1)H_{\ell-m}(y_2)e^{-\frac{|x|^2+|y|^2}{2}}\\
&\hphantom{xx}\times\int_{\Omega}\exp\left(\Re\left(\sqrt{\frac{-2t}{1-t}}\overline{x}f+\sqrt{\frac{-2s}{1-s}}y\overline{g} \right)  \right)dx_1dx_2dy_1dy_2\\
&=\sum\limits_{k,m=0}^\ell\binom{\ell}{k}\binom{\ell}{m}i^{2\ell-k-m}\frac{1}{(\sqrt{-2t})^\ell(1-t)^{l/2+1}}\frac{1}{(\sqrt{-2s})^\ell(1-s)^{\ell/2+1}}\frac{1}{4\pi^2}\\
&\hphantom{xx}\times\int_{\R^4}H_{k}(x_1)H_{\ell-k}(x_2)H_{m}(y_1)H_{\ell-m}(y_2)\\
&\hphantom{xx}\times e^{-\frac{|x|^2}{2(1-t)}-\frac{|y|^2}{2(1-s)}}e^{\frac{1}{2}\Re\left(\sqrt{\frac{-2t}{1-t}}\sqrt{\frac{-2t}{1-t}}\overline{x}y\E[f\overline{g}]  \right)}dx_1dx_2dy_1dy_2
\end{align*}
where in the second inequality we used Lemma \ref{Requarter}. Applying the change of variables $x,=\frac{1}{\sqrt{1-t}}x$ and $y,=\frac{1}{\sqrt{1-s}}y$ and then using Lemma \ref{ukexp} with $u =\sqrt{ts}\Re(y\E[f\overline{g}])$ and again with $u =\sqrt{ts}\Im(y\E[f\overline{g}])$ we have,
\begin{align*}
\int_{\Omega}&G_{\ell}(t,|f|^2)f^\ell \overline{G_{\ell}(t,|g|^2)g^\ell} dP(\omega)\\
&=\sum\limits_{k,m=0}^\ell\binom{\ell}{k}\binom{\ell}{m}i^{2\ell-k-m}\frac{1}{(2ts)^{\ell/2}}\frac{1}{4\pi^2}\\
&\hphantom{xxx}\times\int_{\R^4}H_{k}(x_1;(1-t)^{-1})H_{\ell-k}(x_2;(1-t)^{-1})H_{m}(y_1;(1-s)^{-1})H_{\ell-m}(y_2;(1-s)^{-1})\\
&\hphantom{xxx}\times e^{-\frac{|x|^2}{2}-\frac{|y|^2}{2}}e^{\sqrt{ts}x_1\Re\left(y\E[f\overline{g}]\right)+\sqrt{ts}x_2\Im\left(y\E[f\overline{g}]\right)}dx_1dx_2dy_1dy_2\\
=&\sum\limits_{k,m=0}^\ell\binom{\ell}{k}\binom{\ell}{m}\frac{i^{2\ell-k-m}}{2\pi(2ts)^{\ell/2}}\int_{\R^2}H_{k}(\sqrt{ts}\Re\left(y\E[f\overline{g}]\right);\tfrac{t}{1-t})H_{\ell-k}(\sqrt{ts}\Im\left(y\E[f\overline{g}]\right);\tfrac{t}{1-t})\\
&\hphantom{xxx}\times H_{m}(y_1;(1-s)^{-1})H_{\ell-m}(y_2;(1-s)^{-1})e^{-\frac{|y|^2}{2}}e^{\tfrac{1}{2}\sqrt{ts}|y|^2|\E[f\overline{g}]|^2}dy_1dy_2\\
&=\frac{1}{(2ts)^{\ell/2}}\frac{1}{2\pi}\int_{\R^2}\left(\sum\limits_{k=0}^\ell \binom{\ell}{k} H_{k}(\sqrt{ts}\Re\left(y\E[f\overline{g}]\right);\tfrac{t}{1-t})i^{\ell-k}H_{\ell-k}(\sqrt{ts}\Im\left(y\E[f\overline{g}]\right);\tfrac{t}{1-t})\right)\\
&\hphantom{xxx}\times\left(\sum\limits^\ell_{m=0}\binom{\ell}{m}H_{m}(y_1;(1-s)^{-1})i^{\ell-m}H_{\ell-m}(y_2;(1-s)^{-1})\right)e^{-\tfrac{1}{2}\left(1-\sqrt{ts}|\E[f\overline{g}]|^2  \right)|y|^2}dy_1dy_2.
\end{align*}
Using \eqref{EQU: imag hermite} and then \eqref{EQU: Hermite gen sum} we get,
\begin{align*}
\int_{\Omega}&G_{\ell}(t,|f|^2)f^\ell \overline{G_{\ell}(t,|g|^2)g^\ell} dP(\omega)=\frac{\E[f\overline{g}]^\ell}{2^{\ell+1}\pi}\int_{\R^2}|y|^{2\ell}e^{-\tfrac{1}{2}\left(1-\sqrt{ts}|\E[f\overline{g}]|^2  \right)|y|^2}dy.
\end{align*}
Integrating the right hand side over $\R^2$ using the formula
\begin{equation*}
    \int_{\R^2} |y|^{2\ell}e^{-\beta |y|^2}dy = \frac{\ell! \pi}{\beta^{\ell+1}}
\end{equation*}
we get,
\begin{align*}
\begin{split}
\int_{\Omega}G_{\ell}(t,|f|^2)f^\ell \overline{G_{\ell}(t,|g|^2)g^\ell} dP(\omega)&=\frac{\E[f\overline{g}]^\ell}{2^{\ell+1}\pi}\frac{2^{\ell+1}\pi \ell!}{\left(1-ts|\E[f\overline{g}]|^2\right)^{\ell+1}}\\
&=\ell!\frac{\E[f\overline{g}]^\ell}{\left(1-ts|\E[f\overline{g}]|^2\right)^{\ell+1} }\\
&=\sum_{k=0}^\infty \ell!\binom{\ell+k}{\ell}t^ks^k|\E[f\overline{g}]|^{2k}\E[f\overline{g}]^\ell
\end{split}
\end{align*}
where the last equality uses the Maclaurin series
\begin{equation*}
\frac{1}{(1-x)^{\ell+1}}=\sum\limits_{n=0}^\infty \binom{\ell+n}{\ell}x^n
\end{equation*}
 which is valid for $|x|\leq 1$. On the other hand, from the the generating function $G_\ell$ we have,
\begin{align*}
\int_{\Omega}G_{\ell}(t,|f|^2)f^\ell \overline{G_{\ell}(t,|g|^2)g^\ell} dP(\omega)&=\sum\limits_{k,m=0}^\infty t^ns^m\E\left[L^{(\ell)}_k(|f|^2;\sigma_f)f^\ell\overline{L^{(\ell)}_m(|g|^2;\sigma_g)g^\ell} \right].
\end{align*}
The proposition follows by comparing coefficients.
\end{proof}

\section{On the stochastic convolution}\label{SEC: Sto Conv}

In this section we will give a proof of Proposition \ref{PROP:sconv}, establishing regularity estimates for the Wick ordered powers $:\Psi_N^{k}\overline{\Psi_N}^\ell:$. Before we do this, we state a version of the well known Wiener chaos estimate that will be used extensively in this section.

\begin{proposition}\label{PROP: Weiner chaos}
Let $\{g_n\}_{n\in \Z}$ be a sequence of standard independent identically distributed Gaussian random variables. Let $k\in \mathbb{N}$ and let $\{P_j(\overline{g})\}_{j\in \mathbb{N}}$ be a sequence of polynomials in $\{g_n\}_{n\in \Z}$ of degree at most $k$. Then for $p\geq 2$,
\begin{equation*}
    \bigg\lVert {\sum\limits_{j\in \mathbb{N}}P_j(\overline{g})}\bigg\rVert _{L^p(\Omega)} \leq (p-1)^\frac{k}{2}\bigg\lVert {\sum\limits_{j\in \mathbb{N}}P_j(\overline{g})}\bigg\rVert_{L^2(\Omega)}.
\end{equation*}
\end{proposition}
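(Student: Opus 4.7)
The plan is to deduce this bound from Nelson's hypercontractivity theorem for the Ornstein--Uhlenbeck semigroup $\{T_t\}_{t\geq 0}$ on Gauss space, which I will take as a black box. Nelson's inequality states that for every $p\geq 2$ and every $h\in L^2(\Omega)$, one has $\|T_t h\|_{L^p(\Omega)}\leq \|h\|_{L^2(\Omega)}$ whenever $e^{2t}\geq p-1$. The algebraic fact connecting this to the Wiener chaos bound is that $T_t$ acts on the $i$-th homogeneous Wiener chaos $\mathcal{H}_i$ --- the $L^2(\Omega)$-closure of the span of degree-$i$ Hermite polynomials in the $g_n$ --- by multiplication by $e^{-it}$.

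First I would reduce to a finite sum $F=\sum_{j=1}^M P_j(\overline{g})$ by truncation, since the inequality passes to the $L^2$ limit by Fatou. Because each $P_j$ has degree at most $k$, rewriting monomials via the Hermite basis ($x^n$ expands as a linear combination of $H_0(x),\dots,H_n(x)$) yields a finite Wiener--It\^o chaos expansion $F=F_0+F_1+\cdots+F_k$ with $F_i\in\mathcal{H}_i$. On this finite-dimensional chaos sum the inverse $T_{-t}$ is unambiguously defined by $T_{-t}F:=\sum_{i=0}^k e^{it}F_i$, and satisfies $T_t\circ T_{-t}=\mathrm{Id}$ on the span of $F$.

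Now I would choose $t_\ast$ with $e^{2t_\ast}=p-1$. Applying Nelson's inequality to $h=T_{-t_\ast}F$ and using pairwise $L^2$-orthogonality of distinct chaoses, I would conclude
\begin{align*}
\|F\|_{L^p(\Omega)} &= \bigl\|T_{t_\ast}(T_{-t_\ast}F)\bigr\|_{L^p(\Omega)}\leq \|T_{-t_\ast}F\|_{L^2(\Omega)}\\
&= \Bigl(\sum_{i=0}^k e^{2it_\ast}\|F_i\|_{L^2(\Omega)}^2\Bigr)^{1/2}\leq e^{kt_\ast}\|F\|_{L^2(\Omega)}=(p-1)^{k/2}\|F\|_{L^2(\Omega)},
\end{align*}
which is the desired estimate.

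The only genuinely hard ingredient here is Nelson's hypercontractivity itself, whose proof (via two-point inequalities, tensorization, and the central limit theorem) is quite delicate but entirely standard; I would simply cite it from Nelson's original work or Janson's book on Gaussian Hilbert spaces rather than reproducing it. The remaining steps --- the chaos decomposition of polynomials of degree $\leq k$, the explicit diagonal action of $T_t$ on $\mathcal{H}_i$, and the orthogonality of chaoses --- are elementary consequences of the Hermite-polynomial algebra already touched on in Section \ref{SEC: Lag forms}, so the proof amounts essentially to packaging Nelson's theorem into the form needed for the later stochastic estimates.
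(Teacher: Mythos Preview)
Your argument is correct and is the standard derivation of the Wiener chaos estimate from Nelson's hypercontractivity. The paper does not actually prove this proposition; it simply refers the reader to Simon's book \cite{Simon}, where the proof you outline is essentially the one given, so there is no substantive difference to compare.
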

For a proof of this result, see \cite{Simon}. 

\begin{proof}[Proof of Proposition~\ref{PROP:sconv}]
We prove this proposition using a Fourier analytic approach similar to that in \cite[Proposition 2.1]{GKO}.

We will assume $k\geq \l$, the other case is similar. In the estimates in this proof, for ease of notation, we will ignore the the terms $\chi_N(n)$ in the definition of $\Psi_N$, \eqref{conv} as these terms can simply be bounded by $1$, independently of $N$ and $n$. By Lemma \ref{holdersobolevloddif} it suffices to prove the proposition with $W^{-\eps,\infty}$ in place of $C^{-\eps}$. Further, as $L^{p_1}(\Omega)\subset L^{p_2}(\Omega)$ for $p_1\leq p_2$, it suffices to prove the proposition for $p$ sufficiently large. 

First we drive a useful formula used throughout this proof. For $t_1\leq t_2$, by the independence of $\beta_n$ and $\beta_m$ for $m\neq n$, the independent increment property of Brownian motion and the It\^o isometry we have
\begin{align}
\E\big[ \Psi_N(x,t_1)\overline{\Psi_N(y,t_2)} \big]
&=\gamma\sum\limits_{|n|,|m|\leq N} e_n(x)e_{-m}(y)\E\bigg[\int_{-\infty}^{t_1}e^{-(t_1-t')(a_1+ia_2)(|n|^2+1)}d\beta_n(t')\nonumber\\
&\hspace{130pt}\times\overline{\int_{-\infty}^{t_2}e^{-(t_2-t')(a_1+ia_2)(|m|^2+1)}d\beta_m(t')}  \bigg]   \nonumber\\
&=\gamma\sum\limits_{0\leq |n|\leq N} e_n(x-y)\E\bigg[\int_{-\infty}^{t_1}e^{-(t_1-t')(a_1+ia_2)(|n|^2+1)}d\beta_n(t')\nonumber\\
&\hspace{130pt}\times\overline{\int_{-\infty}^{t_1}e^{-(t_2-t')(a_1+ia_2)(|n|^2+1)}d\beta_n(t')}  \bigg]   \nonumber\\
&=2\gamma\sum\limits_{0\leq |n|\leq N} e_n(x-y)e^{-(t_2-t_1)(a_1-ia_2)(|n|^2+1)}\int_{-\infty}^{t_1}e^{-2(t_1-t')(|n|^2+1)a_1}dt'\nonumber\\
&=\sum\limits_{0\leq |n|\leq N} e_n(x-y)e^{-(t_2-t_1)(a_1-ia_2)(|n|^2+1)}\frac{\gamma}{a_1(|n|^2+1)}\nonumber\\
&= \sum\limits_{0\leq |n|\leq N} e_n(x-y)\zeta(n,t_1,t_2)\label{EQU: Simple Expect xy}
\end{align}
where 
\begin{equation*}
   \zeta(n, t_1, t_2)=e^{-(t_2-t_1)(a_1-ia_2)(|n|^2+1)}\frac{\gamma}{a_1(|n|^2+1)}.  
\end{equation*}
Note that 
\begin{equation}
    |\zeta(n,t_1,t_2)|\lesssim_{a_1,\gamma}\langle n\rangle^{-2}\label{EQU: Zeta bound}.
\end{equation}
When $t_1=t_2$,  $\zeta_n(n,t_1,t_2)$ is independent of $t_1$ and $t_2$ and so we write $\zeta(n)$ instead of $\zeta_n(n,t_1,t_2)$.

Now we will show that $\Psi_N(\cdot, t)\in W^{-\eps, \infty}$ for a fixed $t$. Applying the Bessel potentials $\langle \nabla_x\rangle^{-\eps}$ and $\langle \nabla_y\rangle^{-\eps}$ to \eqref{EQU: Simple Expect xy} with $t_1=t_2=t$ we have,
\begin{align*}
\E[|\langle \nabla_x\rangle^{-\eps}\Psi_N(x,t)|^2]=\sum\limits_{|n|\leq N} \frac{\zeta(n)}{\langle n\rangle^{2\eps}}\lesssim \sum\limits_{|n|\leq N} \frac{1}{\langle n\rangle^{2+2\eps}}\lesssim 1< \infty
\end{align*}
uniformly in $N\in \N$, $x\in \T^2$ and $t\in\R$. Using Proposition \ref{PROP: Sobolev}, switching the order of integration and then using Proposition \ref{PROP: Weiner chaos} we have
\begin{align}
\begin{split}\label{EQU: Sobolev + Weiner Chaos}
\E\left[\norm{\Psi_N(t,\cdot)}_{W^{-\eps,\infty}}^p  \right]& \lesssim_{p,\eps}  \E\left[\norm{\Psi_N(t,\cdot)}_{W^{-\eps/2,p}}^p  \right]\\
&=\int_{\T^2}\E[|\langle\nabla\rangle^{-\eps/2}\Psi_N(t,x)|^p ]dx\\
&\lesssim \int_{\T^2}\E[|\langle\nabla\rangle^{-\eps/2}\Psi_N(t,x)|^2]dx \\
&\lesssim 1.
\end{split}
\end{align}

Now we will show that $:\!\Psi_N(\cdot,t)^k\overline{\Psi_N(\cdot,t)}^\l\!:$ is in $W^{-\eps, \infty}$ for a fixed $t$. Using Lemma \ref{orthoglag},
\begin{align}
\begin{split}\label{EQU: app of ortholag}
\E\bigg[:\!\Psi_N(x,t)^k\overline{\Psi_N(x,t)}^\l\!:&\overline{:\!\Psi_Nk(y,t)^k\overline{\Psi_N(y,t)}^\l\!:}\bigg]\\[5pt]
&=C_{k,l}\E\big[\Psi_N(x,t)\overline{\Psi_N(x,t)}\big]^k\overline{\E\big[\Psi_N(x,t)\overline{\Psi_N(x,t)}\big]}^{\l}\\[5pt]
&=C_{k,l}\bigg(\sum\limits_{|n|\leq N} e_n(x-y)\zeta(n)\bigg)^{k}\bigg(\overline{\sum\limits_{|n|\leq N} e_n(x-y)\zeta(n)}\bigg)^{\l}\\
&=C_{k,l}\sum\limits_{|n_1|,\dots,|n_{k+\l}|\leq N} e_{n_1\dots+n_{k+\l}}(x-y)  \prod\limits_{j=1}^{k+\l} \zeta(n_j)
\end{split}
\end{align}
for some inessential constant $C_{k,\l}$. Applying the Bessel potentials $\langle \nabla_x\rangle^{-\eps}$ and $\langle \nabla_y\rangle^{-\eps}$ and then setting $x=y$ we get,
\begin{align}
\E\left[|\langle \nabla_x\rangle^{-\eps}\!:\!\Psi_Nk(x,t)^k\overline{\Psi_N(x,t)}^\l\!:|^2  \right]&\lesssim \sum\limits_{|n_1|,\dots,|n_{k+\l}|\leq N} \frac{1}{\langle n_1\dots+n_{k+\l}\rangle^{2\eps}} \prod\limits_{j=1}^{k+\l} \frac{1}{\langle n_j\rangle^2}\nonumber.
\end{align}

We want to use an argument similar to that in equation \eqref{EQU: Sobolev + Weiner Chaos} but before we can do this we need to show the sum in the above equation is bounded independently of $N$. To do this we argue by induction. Note that it is obviously bounded when $k+\l=1$. When $k+\l>1$, we split the sum into two regions corresponding to 
\begin{equation*}
\langle n_1+\cdots+n_{k+\l}\rangle\leq \langle n_{k+\l}\rangle\quad  \textnormal{  and  }\quad \langle n_1+\cdots+n_{k+\l}\rangle > \langle n_{k+\l}\rangle.
\end{equation*} 
This is motivated by the fact that 
\begin{equation*}
    \langle n_1+\cdots+n_{k+\l-1}\rangle \lesssim \max(\langle n_1+\cdots+n_{k+\l}\rangle, \langle n_{k+\l}\rangle).
\end{equation*}
With this splitting we have,
\begin{align}
   \sum\limits_{|n_{k+\l}|\leq N} &\frac{1}{\langle n_1+\dots+n_{k+\l}\rangle^{2\eps}}\frac{1}{\langle n_{k+\l}\rangle^2} \lesssim \label{EQU: Est of sum}\\ 
   &\hspace{35pt}\frac{1}{\langle n_1+\cdots+n_{k+\l-1}\rangle^{\eps}} \sum\limits_{|n_{k+\l}|\leq N} \frac{1}{\langle n_1+\cdots+n_{k+\l}\rangle^{2\eps}\langle n_{k+\l}\rangle^{2-\eps} }\nonumber\\
   & \hspace{35pt}+ \frac{1}{\langle n_1+\cdots+n_{k+\l-1}\rangle^{\eps}} \sum\limits_{|n_{k+\l}|\leq N} \frac{1}{\langle n_1+\cdots+n_{k+\l}\rangle^{\eps}\langle n_{k+\l}\rangle^{2} }\nonumber\\
   &\hspace{130pt}\lesssim \frac{1}{\langle n_1+\cdots+n_{k+\l-1}\rangle^{\eps}}\nonumber
\end{align}
independently of $N$. Hence,
\begin{align*}
    \sum\limits_{|n_1|,\dots,|n_{k+\l}|\leq N} \frac{1}{\langle n_1\dots+n_{k+\l}\rangle^{2\eps}} \prod\limits_{j=1}^{k+\l} \frac{1}{\langle n_j\rangle^2}\lesssim \sum\limits_{|n_1|,\dots,|n_{k+\l-1}|\leq N} \frac{1}{\langle n_1\dots+n_{k+\l-1}\rangle^{\eps}} \prod\limits_{j=1}^{k+\l-1} \frac{1}{\langle n_j\rangle^2}
\end{align*}
and so the desired bound follows by induction.

We have shown, 
\begin{align*}
\E\left[|\langle \nabla_x\rangle^{-\eps}\!:\!\Psi_Nk(x,t)^k\overline{\Psi_N(x,t)}^\l\!:|^2  \right] < \infty
\end{align*}
independently of $N$. Using the Propositions \ref{PROP: Sobolev} and \ref{PROP: Weiner chaos} in similar way to \eqref{EQU: Sobolev + Weiner Chaos},
\begin{align*}
\begin{split}
\E [ \norm{:\!\Psi_N(\cdot,t)^k\overline{\Psi_N(\cdot,t)}^\l\!\!:}_{W^{-\eps,\infty}}^p ] &\lesssim_{p,\eps} \E [ \norm{:\!\Psi_N(\cdot,t)^k\overline{\Psi_N(\cdot,t)}^\l\!\!:}_{W^{-\eps/2,p}}^p ]\\
&=\int_{\T^2} \E[|\langle \nabla\rangle^{-\eps/2}:\!\Psi_N(\cdot,t)^k\overline{\Psi_N(\cdot,t)}^\l\!\!:|^p]dx\\
&\lesssim\int_{\T^2} \E[|\langle \nabla\rangle^{-\eps/2}:\!\Psi_N(\cdot,t)^k\overline{\Psi_N(\cdot,t)}^\l\!\!:(\cdot,t)|^2]dx\\
&\lesssim1<\infty
\end{split}
\end{align*}
which shows $:\!\Psi_N^k\overline{\Psi_N}^\l\!\!:\,\in L^p(\Omega; L^\infty([0,T];W^{-\eps,\infty}))$ uniformly in $N$.

Now we show that $:\!\Psi_N^k\overline{\Psi_N}^\l\!\!:$ is Cauchy in $L^p(\Omega,L^\infty([0,T],W^{-\eps,\infty}))$. For $N\geq M\geq 1$, similar to \eqref{EQU: app of ortholag} we have,

\begin{align}
\E\bigg[(:\!\Psi_N(x,t)^k\overline{\Psi_N(x,t)}^\l\!:&-:\!\Psi_M(x,t)^k\overline{\Psi_M(x,t)}^\l\!:)  \nonumber\\
&\hspace{-100pt}\times\overline{(:\!\Psi_N(y,t)^k\overline{\Psi_N(y,t)}^\l\!:-:\!\Psi_M(y,t)^k\overline{\Psi_M(y,t)}^\l\!:)} \bigg]\nonumber\\
&=C_{k,l}\E\big[\Psi_N(x,t)\overline{\Psi_N(y,t)}\big]^{k}\overline{\E\big[\Psi_N(x,t)\overline{\Psi_N(y,t)}\big]}^{\l}\nonumber\\[5pt]
&\hphantom{=}-C_{k,l}\E\big[\Psi_N(x,t)\overline{\Psi_M(y,t)}\big]^{k}\overline{\E[\Psi_N(x,t)\overline{\Psi_M(y,t)}]}^{\l}\nonumber\\[5pt]
&\hphantom{=}-C_{k,l}\E\big[\Psi_M(x,t)\overline{\Psi_N(y,t)}\big]^{k}\overline{\E\big[\Psi_M(x,t)\overline{\Psi_N(y,t)}\big]}^{\l}\nonumber\\[5pt]
&\hphantom{=}+C_{k,l}\E\big[\Psi_M(x,t)\overline{\Psi_M(y,t)}\big]^{k}\overline{\E\big[\Psi_M(x,t)\overline{\Psi_M(y,t)}\big]}^{\l}\nonumber\\[5pt]
&=C_{k,l}\bigg(\sum\limits_{|n|\leq N} e_n(x-y)\zeta(n)\bigg)^{k}\bigg(\overline{\sum\limits_{|n|\leq N} e_n(x-y)\zeta(n)}\bigg)^{\l}\nonumber\\
&\hphantom{=}-C_{k,\l}\bigg(\sum\limits_{|n|\leq M} e_n(x-y)\zeta(n)\bigg)^{k}\bigg(\overline{\sum\limits_{|n|\leq M} e_n(x-y)\zeta(n)}\bigg)^{\l}\nonumber\\
&=C_{k,l}\sum\limits_{|n_1|,\dots,|n_{k+\l}|\leq N} e_{n_1\dots+n_{k+\l}}(x-y)  \prod\limits_{j=1}^{k+\l}\zeta(n_j)\nonumber\\
& \hphantom{=}- C_{k,l}\sum\limits_{|n_1|,\dots,|n_{k+\l}|\leq M} e_{n_1\dots+n_{k+\l}}(x-y)  \prod\limits_{j=1}^{k+\l}\zeta(n_j). \label{EQU: appl of ortholag 2}
\end{align}

Using the notation 
\begin{equation*}
    \Gamma_{N,M}(\overline{n}) = \{ |n_1|,\dots,|n_{k+\l}|\leq N : |n_j| > M \textnormal{ for some } j  \}
\end{equation*}
we have
\begin{equation*}
    \textnormal{LHS of } \eqref{EQU: appl of ortholag 2} = C_{k,l}\sum\limits_{\Gamma_{N,M}(\overline{n})} e_{n_1\dots+n_{k+\l}}(x-y)  \prod\limits_{j=1}^{k+\l}\zeta(n_j).
\end{equation*}
Applying the Bessel potentials $\langle \nabla_x\rangle^{-\eps}$ and $\langle \nabla_y\rangle^{-\eps}$ and then setting $x=y$ we get
\begin{align*}
    \E\big[\big|\langle \nabla_x \rangle^{-\eps}\big(:\!\Psi_N(x,t)^k\overline{\Psi_N(x,t)}^\l\!:&-:\!\Psi_M(x,t)^k\overline{\Psi_M(x,t)}^\l\!:\big)\big|^2 \big]\\
    &= C_{k,l}\sum\limits_{\Gamma_{N,M}(\overline{n})} \frac{1}{\langle n_1\dots+n_{k+\l}\rangle^{2\eps}}  \prod\limits_{j=1}^{k+\l}\zeta(n_j).
\end{align*}
We can estimate this sum in a way similar to \eqref{EQU: Est of sum}. Indeed, without loss of generality we can assume $|n_{k+\l}|>M$. Then, adapting the estimate in \eqref{EQU: Est of sum}, we have,
\begin{align}
   \sum\limits_{N<|n_{k+\l}|\leq M} &\frac{1}{\langle n_1+\dots+n_{k+\l}\rangle^{2\eps}}\frac{1}{\langle n_{k+\l}\rangle^2} \lesssim \label{EQU: Est of sum 2}\\ 
   &\hspace{35pt}\frac{1}{\langle n_1+\cdots+n_{k+\l-1}\rangle^{\eps}} \sum\limits_{N<|n_{k+\l}|\leq M} \frac{1}{\langle n_1+\cdots+n_{k+\l}\rangle^{2\eps}\langle n_{k+\l}\rangle^{2-\eps} }\nonumber\\
   & \hspace{20pt}+ \frac{1}{\langle n_1+\cdots+n_{k+\l-1}\rangle^{\eps}} \sum\limits_{N<|n_{k+\l}|\leq M} \frac{1}{\langle n_1+\cdots+n_{k+\l}\rangle^{\eps}\langle n_{k+\l}\rangle^{2} }\nonumber\\
   &\hspace{133pt}\lesssim \frac{1}{\langle n_1+\cdots+n_{k+\l-1}\rangle^{\eps}}\frac{1}{M^\frac{\eps}{2}  }.\nonumber
\end{align}
This shows,
\begin{equation*}
    \sum\limits_{\Gamma_{N,M}(\overline{n})} \frac{1}{\langle n_1\dots+n_{k+\l}\rangle^{2\eps}}  \prod\limits_{j=1}^{k+\l}\zeta(n_j) \lesssim M^{-\frac{\eps}{2}}.
\end{equation*}
Using the Propositions \ref{PROP: Sobolev} and \ref{PROP: Weiner chaos} in a similar way to \eqref{EQU: Sobolev + Weiner Chaos} we have,
\begin{align}\label{EQU: Sobolev + Weiner third time}
\E [ \norm{:\!\Psi_N(\cdot,t)^k\overline{\Psi_N(\cdot,t)}^\l\!\!:-:\!\Psi_M(\cdot,t)^k\overline{\Psi_M(\cdot,t)}^\l\!\!:}_{W^{-\eps,\infty}}^p ] &\lesssim_{p,\eps} M^{-\frac{\eps}{2}}.
\end{align}
We now show a time difference estimate for $:\!\Psi^k_N\overline{\Psi}^\l_N\!\!:$. This will show that $:\!\Psi^k_N\overline{\Psi}^\l_N\!\!:$ is almost surely continuous in time and hence, combined with the previous part of this proof, $:\!\Psi^k_N\overline{\Psi}^\l_N\!\!:$ is Cauchy in $L^p(\Omega; C([0,T];W^{-\eps,\infty}))$. 

We define the time deference operator
\begin{equation*}
    \delta_h:\!\Psi_N^k\overline{\Psi_N}^\l(x,t)\!: \hspace{3pt}\stackrel{\text{def}}{=}\hspace{3pt} :\!\Psi_N^k\overline{\Psi_N}^\l(x,t+h)\!: -:\!\Psi_N^k\overline{\Psi_N}^\l(x,t)\!:
\end{equation*}
 for $|h|<1$. In the following we will assume $h>0$ for simplicity. Simple modifications are needed for the $h<0$ case. Expanding and then using Proposition \ref{orthoglag} we have,
\begin{align} 
    \E\bigg[\big(\delta_h:&\Psi_N^k\overline{\Psi_N}^\l(x,t)\!:\big)\overline{\big(\delta_h:\!\Psi_N^k\overline{\Psi_N}^\l(y,t)\!:\big)}\bigg]\label{EQU: time dif ortholag}\\
    &= \E\bigg[:\!\Psi_N^k\overline{\Psi_N}^\l(x,t+h)\!: \overline{:\!\Psi_N^k\overline{\Psi_N}^\l(y,t+h)\!:}\bigg]\nonumber\\
    &\hspace{10pt}-\E\bigg[:\!\Psi_N^k\overline{\Psi_N}^\l(x,t)\!: \overline{:\!\Psi_N^k\overline{\Psi_N}^\l(y,t+h)\!:}\bigg]\nonumber\\
    &\hspace{10pt}+\E\bigg[:\!\Psi_N^k\overline{\Psi_N}^\l(x,t)\!: \overline{:\!\Psi_N^k\overline{\Psi_N}^\l(y,t)\!:}\bigg]\nonumber\\
    &\hspace{10pt}-\E\bigg[:\!\Psi_N^k\overline{\Psi_N}^\l(x,t+h)\!: \overline{:\!\Psi_N^k\overline{\Psi_N}^\l(y,t)\!:}\bigg]\nonumber\\
    & = C_{k,\l}\E\big[\Psi_N(x,t+h)\overline{\Psi_N(y,t+h)}\big]^{k}\overline{\E\big[\Psi_N(x,t+h)\overline{\Psi_N(y,t+h)}\big]}^{\l}\nonumber\\[5pt]
    & \hspace{10pt}-C_{k,\l}\E\big[\Psi_N(x,t)\overline{\Psi_N(y,t+h)}\big]^{k}\overline{\E\big[\Psi_N(x,t)\overline{\Psi_N(y,t+h)}\big]}^{\l}\nonumber\\[5pt]
    &\hspace{10pt}+ C_{k,\l} \E\big[\Psi_N(x,t)\overline{\Psi_N(y,t)}\big]^{k}\overline{\E\big[\Psi_N(x,t)\overline{\Psi_N(y,t)}\big]}^{\l}\nonumber\\[5pt]
    & \hspace{10pt}-C_{k,\l}\E\big[\Psi_N(x,t+h)\overline{\Psi_N(y,t)}\big]^{k}\overline{\E\big[\Psi_N(x,t+h)\overline{\Psi_N(y,t)}\big]}^{\l}\nonumber\\[5pt]
    & = (\textnormal{I}) + (\textnormal{II}) \nonumber
\end{align}
where in (I), we group the first and second terms on the right hand side of \eqref{EQU: time dif ortholag} and in (II) we group the third and fourth terms on the right hand side of \eqref{EQU: time dif ortholag}. Using the purely algebraic formula
\begin{equation}\label{EQU: poly dif}
    a^k\overline{a}^\l -b^k\overline{b}^\l = (a-b)\overline{a}^\l\sum\limits_{i=0}^{k-1}b^ia^{k-1-i}+\overline{(a-b)}b^k\sum\limits_{i=0}^{\l-1}\overline{b}^i\overline{a}^{\l-1-i}
\end{equation}
we can write (I) as,
\begin{align*}
    (\textnormal{I})&= C_{k,\l}\E\big[\delta_h\Psi_N(x,t)\overline{\Psi_N(y,t+h)}\big]\overline{\E\big[\Psi_N(x,t+h)\overline{\Psi_N(y,t+h)}\big]}^{\l}\nonumber\\[5pt]
    &\hspace{25pt}\times\sum\limits_{i=0}^{k-1}\E\big[\Psi_N(x,t)\overline{\Psi_N(y,t+h)}\big]^i\E\big[\Psi_N(x,t+h)\overline{\Psi_N(y,t+h)}\big]^{k-1-i}\nonumber\\[5pt]
    &\hspace{25pt} - C_{k,\l}\overline{\E\big[\delta_h\Psi_N(x,t)\overline{\Psi_N(y,t)}\big]}\E\big[\Psi_N(x,t)\overline{\Psi_N(y,t)}\big]^{\l} \label{EQU: time def poly dif est}\\[5pt]
    &\hspace{25pt}\times\sum\limits_{i=0}^{\l-1}\overline{\E\big[\Psi_N(x,t)\overline{\Psi_N(y,t)}\big]}^i\overline{\E\big[\Psi_N(x,t+h)\overline{\Psi_N(y,t)}\big]}^{\l-1-i}\nonumber \\[5pt]
    & = (\textnormal{Ia})+ (\textnormal{Ib}).\nonumber
\end{align*}
Using equation \eqref{EQU: poly dif} again we have a similar decomposition for (II),
\begin{equation*}
    (\textnormal{II}) = (\textnormal{IIa}) + (\textnormal{IIb}).
\end{equation*}
From \eqref{EQU: Simple Expect xy} we have,
\begin{align*}
    \E\big[\delta_h\Psi_N(x,t)\overline{\Psi_N(y,t+h)}\big] = \sum\limits_{|n|\leq N}e_n(x-y)\big(\zeta(n) - \zeta(n,t,t+h)\big)
\end{align*}
A similar equality holds for $\E\big[\delta_h\Psi_N(x,t)\overline{\Psi_N(y,t)}\big]$. Using the mean value theorem,
\begin{equation*}
    \big|\zeta(n) - \zeta(n,t,t+h)\big|\lesssim \min\big( |h|, \langle n\rangle^{-2}\big).
\end{equation*}
Hence by interpolation with \eqref{EQU: Zeta bound},
\begin{equation}\label{EQU: MVT app to zeta}
    \big|\zeta(n) - \zeta(n,t+t+h)\big|\lesssim |h|^{\alpha}\langle n\rangle^{2-2\alpha}.
\end{equation}
Taking the $\langle \nabla_x\rangle^{-\eps}$ and $\langle \nabla_y\rangle^{-\eps}$ Bessel potentials of \eqref{EQU: time dif ortholag}, setting $x=y$ and then using the estimates $\eqref{EQU: Zeta bound}$ and \eqref{EQU: MVT app to zeta} we have,
\begin{align*}
    \E\bigg[\big|\delta_h\big(\langle \nabla\rangle^{-\eps}:\!\Psi_N^k\overline{\Psi_N}^\l(\cdot,t)\!:\big)\big|^2\bigg]\lesssim |h|^\alpha \sum\limits_{|n_1|,\dots,|n_{k+\l}|\leq N}\frac{1}{\langle n_1+\cdots+n_{k+\l}\rangle^{2\eps}} \frac{1}{\langle n_1\rangle^{2-2\alpha}}\prod\limits_{j=2}^{k+\l}\frac{1}{\langle n_j\rangle^2}.
\end{align*}
If $\alpha<\eps$ the summation in the above equation can be summed using a method similar to \eqref{EQU: Est of sum 2}. Using Propositions \ref{PROP: Sobolev} and \ref{PROP: Weiner chaos},
\begin{align}\label{EQU: time dif fixed N}
    \E\bigg[\norm{\delta_h:\!\Psi_N^k\overline{\Psi_N}^\l(x,t)\!:}_{W^{-\eps,\infty}}^p\bigg]\lesssim |h|^{\alpha p}.
\end{align}
Choosing $p$ large enough so that $\alpha p>1$, the Kolmogorov continuity criterion, see \cite[Propostion 8.2]{Bass}, implies that $:\!\Psi_N^k\overline{\Psi_N}^\l\!: \in C([0,T];W^{-\eps,\infty})$ almost surely. For the convergence of $:\!\Psi_N^k\overline{\Psi_N}^\l\!:$ in $L^p(\Omega; C([0,T];W^{-\eps,\infty}))$, in a manner similar to \eqref{EQU: time dif fixed N} and \eqref{EQU: Sobolev + Weiner third time}, for $N\geq M\geq 1$ we can show
\begin{align*}\label{EQU: time dif N dif}
    \E\bigg[\norm{\delta_h\big(:\!\Psi_N^k\overline{\Psi_N}^\l(\cdot,t)\!:-:\!\Psi_M^k\overline{\Psi_M}^\l(\cdot,t)\!:\big)}_{W^{-\eps,\infty}}^p\bigg]\lesssim |h|^{\alpha p}M^{-\eps/2}.
\end{align*}
Choosing $p$ large enough so that $\alpha p>1$, from the Kolmogorov continuity criterion we have that $:\!\Psi_N^k\overline{\Psi_N}^\l\!:$ is a Cauchy sequence in $L^p(\Omega; C([0,T];W^{-\eps,\infty}))$ and so denoting it's limit by $:\!\Psi^k\overline{\Psi}^\l\!:$ we have that $:\!\Psi^k\overline{\Psi}^\l\!: \in C([0,T];W^{-\eps,\infty}))$ almost surely.
\end{proof}

\begin{remark}
\textnormal{The above argument can be easily adapted to show the paths of $:\!\!\Psi^k\overline{\Psi}^\l\!:$ are in $C^{\alpha}([0,T];C^{-\eps})$ almost surely for $\alpha<\eps$. See for example \cite{GKO}.}
\end{remark}

\begin{remark}\label{REM: Smooth truncation sto conv}
\textnormal{Similar calculations would show that Proposition \ref{PROP:sconv} also holds for a sharp frequency truncation of $\Psi$ instead of a smooth frequency truncation. In particular, if $\Plow$ is the smooth frequency projector defined by 
\begin{equation*}
    \Plow\left(\sum\limits_{n\in\Z^2} f_n e^{in\cdot x} \right)=\sum\limits_{|n|\leq N}f_ne^{in\cdot x},
\end{equation*}
then,
\begin{align*}
    \E\big[\norm{\Plow \Psi(t)}_{C^{-\eps}}^p\big]<C<\infty. 
\end{align*}}
\end{remark}

\section{Local well-posedness of the renormalized SCGL}\label{SEC:3}

\subsection{Statement of results}
In this section, we present the proof of Theorem \ref{LWP}. To do, this we reformulate Theorem \ref{LWP} in a way slightly more amenable to PDE techniques.  

For $\eps>0$ to be fixed later, we consider the space $\widehat{C}^{-\eps}_T$ of $(m+1)\times m$-tuples of functions in $C([0,T];C^{-\eps})$. That is $\vec{z}\in \widehat{C}^{-\eps}_T$ if 
\begin{equation*}
z_{i,j}\in C([0,T];C^{-\eps}) \mbox{ for all } 0\leq i\leq m \mbox{ and }   0\leq j\leq m-1.
\end{equation*}
We define a norm on $\widehat{C}^{-\eps}_T$ as follows:
\begin{equation*}
\norm{\vec{z}}_{\widehat{C}^{-\eps}_T}=\max\limits_{i,j}\norm{z_{i,j}}_{C([0,T];C^{-\eps})}.
\end{equation*}

Instead of studying \eqref{EQU: for v SCGL} directly, for $\vec{z}\in \widehat{C}^{-\eps}_T$ we study the equation 
\begin{align}
\begin{cases}
\dt v=(a_1+ia_2)[\Delta-1]v+F(v,\vec{z})\\
v|_{t = 0} = v_0
\end{cases}
\quad (x, t) \in \T^2\times \R_+
\label{SCGL11}
\end{align}
where
\begin{equation*}
F(v,\vec{z})=(c_1+ic_2)\sum\limits_{\substack{0\leq i\leq m\\0\leq j\leq m-1}}\binom{m}{i}\binom{m-1}{j}z_{m-i,m-1-j}v^i\overline{v}^j.
\end{equation*}
\noindent
The local well-posedness argument in this section will work for any choice of $\vec{z}\in \widehat{C}^{-\eps}_T$. Proposition \ref{PROP:sconv} shows that, $\{:\!\Psi^\ell\overline{\Psi}^k\!:\}_{i,j}\in \widehat{C}^{-\eps}_T$. Hence if we show that \eqref{SCGL11} is locally well-posed, Theorem \ref{LWP} will follow. The point of proving local well-posedness this way is that it draws a clear line between the probabilistic techniques used in the construction of the stochastic objects in Section \ref{SEC: Sto Conv} and the PDE techniques used in this section.

As usual, we interpret \eqref{SCGL11} in the mild sense. That is we say $v$ solves \eqref{SCGL11} on $[0,T]$ if for all $t\in[0,T]$,
\begin{equation}
v(t)=  S(t)v_0
 +\int_{0}^t  S(t-t') F(v,\vec{z}) dt'. 
\label{SCGL12}
\end{equation}

In the following we look for solutions in the Banach space $X^{s_1,s_2}_T$ defined through the norm
\begin{equation*}
    \norm{v}_{X^{s_1,s_2}_T} = \norm{v}_{\frac{s_2-s_1}{2},s_2,T} + \norm{v}_{L^\infty([0,T];C^{s_0})}
\end{equation*}
where
\begin{equation*}\label{EQU: time weighted norm}
    \norm{v}_{\alpha,\beta,T} = \sup\limits_{t\in [0,T]}t^\alpha\norm{v(t)}_{C^{\beta}}.
\end{equation*}

The goal in this section is to prove the following.

\begin{proposition}\label{solntoSCGL11}
Suppose $m\geq 2$, an integer, and $s_0<0$ are such that
\begin{equation}\label{indicecond}
-(2m-1)\frac{s_0}{2} <1.
\end{equation}
Then \eqref{SCGL11} is locally well-posed for initial data in $C^{s_0}$.
More precisely for $\eps>0$ small enough there exists $\theta>0$ such that for $R>1$, given $v_0\in C^{s_0}$ and $\vec{z}\in \widehat{C}^{-\eps}_T$  such that 
\begin{equation*}
\norm{\vec{z}}_{\widehat{C}^{-\eps}_T},\norm{v_0}_{C^{-s_0}}\leq R
\end{equation*}
there exists a unique solution, $v\in C([0,T];C^{s_0})\cap C((0,T];C^{2\eps})$ where $T\sim R^{-\theta}$. Moreover, if $v_0,u_0\in C^{-s_0}$ and $\vec{z},\vec{x}\in \widehat{C}^{-\eps}_T$ satisfy
\begin{equation*}
\norm{\vec{z}}_{\widehat{C}^{-\eps}_T},\norm{v_0}_{C^{s_0}},\norm{\vec{x}}_{\widehat{C}^{-\eps}_T},\norm{u_0}_{C^{s_0}} \leq R
\end{equation*}
then the respective solutions $v_1,v_2\in C((0,T];C^{-s_0})$ to \eqref{SCGL12} with initial data and forcing $v_0,\vec{z}$ and $u_0,\vec{x}$ satisfy 
\begin{equation*}
\norm{v_1-v_2}_{X^{s_0,2\eps}_T}\lesssim \norm{u_0-v_0}_{C^{s_0}}+\norm{\vec{z}-\vec{x}}_{\widehat{C}^{-\eps}_T}.
\end{equation*}
\end{proposition}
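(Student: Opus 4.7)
The plan is a Picard/Da Prato--Debussche contraction argument applied to the Duhamel map
\begin{equation*}
\Phi(v)(t) = S(t)v_0 + \int_0^t S(t-t')\,F(v(t'),\vec z(t'))\,dt'
\end{equation*}
on the Banach space $X^{s_0,2\eps}_T$, for $T>0$ and $\eps>0$ to be chosen small. The time weight $t^{(2\eps-s_0)/2}$ built into $\|\cdot\|_{(2\eps-s_0)/2,\,2\eps,\,T}$ is essential: it absorbs the blow-up of $\|S(t)v_0\|_{C^{2\eps}}$ as $t\to 0^+$ coming from the rough data $v_0\in C^{s_0}$, and provides the auxiliary positive regularity needed to pair $v^i\bar v^j$ against the Wick-ordered data $\vec z\in \widehat{C}^{-\eps}_T$.

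For the linear part, Proposition~\ref{PROP: Heat smooth} yields $\|S(t)v_0\|_{C^{2\eps}}\lesssim t^{(s_0-2\eps)/2}\|v_0\|_{C^{s_0}}$ together with the trivial $\|S(t)v_0\|_{C^{s_0}}\lesssim \|v_0\|_{C^{s_0}}$, giving $\|S(\cdot)v_0\|_{X^{s_0,2\eps}_T}\lesssim \|v_0\|_{C^{s_0}}$. Continuity $[0,T]\to C^{s_0}$ follows from the unlabeled continuity proposition, and continuity $(0,T]\to C^{2\eps}$ follows from Proposition~\ref{Prop: Time cont}. For the nonlinear part I would estimate each monomial $z_{m-i,m-1-j}v^i\bar v^j$ separately in $C^{-\eps}$, combining the product estimate (Proposition~\ref{applicparaprod}) with the algebra property (Proposition~\ref{PROP: algebra prop}):
\begin{equation*}
\|z_{m-i,m-1-j}\,v^i\bar v^j(t')\|_{C^{-\eps}}\lesssim \|\vec z\|_{\widehat C^{-\eps}_T}\,\|v(t')\|_{C^{2\eps}}^{i+j}\lesssim R\,(t')^{-(2\eps-s_0)(i+j)/2}\,\|v\|_{X^{s_0,2\eps}_T}^{i+j}.
\end{equation*}
Applying $S(t-t')$ from $C^{-\eps}$ into $C^{2\eps}$ via Proposition~\ref{PROP: Heat smooth} (smoothing exponent $-3\eps/2$) and integrating produces a Beta-type kernel whose integrability at $t'=0$ requires $(2\eps-s_0)(i+j)<2$ for every admissible pair $(i,j)$; the $C^{s_0}$ component of $X^{s_0,2\eps}_T$ is controlled similarly (from $C^{-\eps}$ into $C^{s_0}$, using either smoothing or embedding depending on the sign of $s_0+\eps$).

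The binding constraint occurs at the maximal degree $i+j=2m-1$, where the integrability condition reads $(2m-1)(2\eps-s_0)<2$; for $\eps>0$ small this is exactly the hypothesis~\eqref{indicecond}. Performing the time integration then yields a factor $t^{1-3\eps/2-(2\eps-s_0)(i+j)/2}$ with strictly positive exponent, so after reinserting the weight one obtains
\begin{equation*}
\|\Phi(v)\|_{X^{s_0,2\eps}_T}\lesssim \|v_0\|_{C^{s_0}} + C\,T^\theta\,R\,\|v\|_{X^{s_0,2\eps}_T}^{2m-1}
\end{equation*}
for some $\theta=\theta(m,s_0,\eps)>0$. A standard choice $T\sim R^{-\theta'}$ then makes the ball of radius $2\|v_0\|_{C^{s_0}}$ invariant under $\Phi$.

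For the difference estimate (contraction and the final Lipschitz statement), I would expand $v_1^i\bar v_1^j-v_2^i\bar v_2^j$ via the telescoping identity~\eqref{EQU: poly dif}, so that each term contains a factor $v_1-v_2$ together with at most $2m-2$ remaining factors in $C^{2\eps}$, and treat the $\vec z-\vec x$ contribution by the same product/algebra/heat-smoothing pipeline. The main obstacle of the whole argument is the tight power counting at the top-degree monomial $v^i\bar v^j$ with $i+j=2m-1$: this is the unique place hypothesis~\eqref{indicecond} is consumed, and it simultaneously fixes the admissible range of $\eps$ and the exponent $\theta$ in the local existence time.
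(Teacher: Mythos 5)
Your proposal is correct and follows essentially the same route as the paper: a contraction on the time-weighted space $X^{s_0,2\eps}_T$ via the Duhamel map, using the heat smoothing estimate (Proposition~\ref{PROP: Heat smooth}), the product estimate (Proposition~\ref{applicparaprod}) with the algebra property, the Beta-type integral lemma (Lemma~\ref{intest}), and the telescoping identity~\eqref{EQU: poly dif} for the contraction and Lipschitz dependence. Your identification of the top-degree monomial $i+j=2m-1$ as the place where hypothesis~\eqref{indicecond} is consumed matches the paper's power counting exactly.
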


A similar local well-posedness result holds for $C^s$ initial data, but with time of existence depending on the $L^p$ norm of the initial data. See \cite{MW1} for a similar result for SQE.

\begin{proposition}\label{smoother ID}
Suppose $m\geq 2$ is an integer, $p>2m-1$ and $\eps>0$ is sufficiently small but fixed. Then for initial data in $C^{2\eps}$, there exists a unique solution to \eqref{SCGL11} in $C([0,T],C^{2\eps})$. Moreover, this solution depends continuously on $\vec{z}$ and $v_0$. More precisely there exists $\theta>0$ such that for $R>1$, given $v_0\in C^{2\eps}$ and $\vec{z}\in \widehat{C}^{-\eps}_T$  such that 
\begin{equation*}
\norm{\vec{z}}_{\widehat{C}^{-\eps}_T},\norm{v_0}_{L^p}\leq R
\end{equation*}
there exists a unique solution, $v\in C([0,T];C^{2\eps})$ where $T\sim R^{-\theta}$, to \eqref{SCGL12}. Moreover, if $v_0,u_0\in C^{2\eps}$ and $\vec{z},\vec{x}\in \widehat{C}^{-\eps}_T$ satisfy
\begin{equation*}
\norm{\vec{z}}_{\widehat{C}^{-\eps}_T},\norm{v_0}_{C^{2\eps}},\norm{\vec{x}}_{\widehat{C}^{-\eps}_T},\norm{u_0}_{C^{2\eps}} \leq R
\end{equation*}
then the respective solutions $v_1,v_2\in C([0,T];C^{s_0})$ to \eqref{SCGL12} with initial data and forcing $v_0,\vec{z}$ and $u_0,\vec{x}$ satisfy 
\begin{equation*}
\norm{v_1-v_2}_{C([0,T]; C^{2\eps})}\lesssim_{R} \norm{u_0-v_0}_{C^{2\eps}}+\norm{\vec{z}-\vec{x}}_{\widehat{C}^{-\eps}_T}.
\end{equation*}
\end{proposition}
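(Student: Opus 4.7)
The plan is to reduce Proposition~\ref{smoother ID} to Proposition~\ref{solntoSCGL11} applied at a carefully chosen negative regularity, and then to upgrade the resulting solution to $C([0,T]; C^{2\eps})$ using the $C^{2\eps}$-smoothness of the initial data.

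First, I would choose $s_0 := -2/p$. Since $p > 2m-1$ we have $-2/p > -2/(2m-1)$, so the hypothesis $-(2m-1)s_0/2 < 1$ of Proposition~\ref{solntoSCGL11} holds. On $\T^2$ the chain of embeddings $L^p \hookrightarrow B^0_{p,\infty} \hookrightarrow B^{-2/p}_{\infty,\infty} = C^{-2/p}$ holds (the second step being an instance of Proposition~\ref{besovembed} with $d=2$), so $\norm{v_0}_{C^{-2/p}} \lesssim \norm{v_0}_{L^p} \leq R$. Proposition~\ref{solntoSCGL11} then produces a unique solution $v \in X^{-2/p, 2\eps}_T$ with $T \sim R^{-\theta}$, together with the corresponding Lipschitz bound in $X^{-2/p, 2\eps}_T$.

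The main step is to promote $v$ from $C([0,T]; C^{-2/p}) \cap C((0,T]; C^{2\eps})$ to $C([0,T]; C^{2\eps})$. Decomposing $v = S(\cdot)v_0 + \Phi(v)$ with $\Phi(v)(t) = \int_0^t S(t-t') F(v(t'), \vec{z}) \, dt'$, the linear piece lies in $C([0,T]; C^{2\eps})$ by the strong continuity of the heat semigroup on $C^{2\eps}$ stated at the end of Section~\ref{Sec: Basic ests}. For $\Phi(v)$, I would combine Proposition~\ref{PROP: Heat smooth} (from $C^{-\eps}$ to $C^{2\eps}$, costing $(t-t')^{-3\eps/2}$), Proposition~\ref{applicparaprod}, and the algebra property (Proposition~\ref{PROP: algebra prop}) to obtain
\begin{equation*}
\norm{\Phi(v)(t)}_{C^{2\eps}} \lesssim \norm{\vec{z}}_{\widehat{C}^{-\eps}_T} \int_0^t (t-t')^{-3\eps/2} \bigl(1 + \norm{v(t')}_{C^{2\eps}}^{2m-1}\bigr) \, dt'.
\end{equation*}
Membership in $X^{-2/p, 2\eps}_T$ supplies the blow-up bound $\norm{v(t')}_{C^{2\eps}} \lesssim_R t'^{-(\eps + 1/p)}$, so the integrand has a singularity of order $t'^{-(2m-1)(\eps+1/p)}$ at $t' = 0$. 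This is integrable against $(t-t')^{-3\eps/2}$ precisely because $p > 2m-1$ (for $\eps$ small enough), and a beta-function computation gives $\norm{\Phi(v)(t)}_{C^{2\eps}} \lesssim_R t^\alpha$ for some $\alpha > 0$. Hence $\Phi(v)(t) \to 0$ in $C^{2\eps}$ as $t \to 0^+$, establishing $v \in C([0,T]; C^{2\eps})$.

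Uniqueness in $C([0,T]; C^{2\eps})$ is inherited from the embedding $C([0,T]; C^{2\eps}) \hookrightarrow X^{-2/p, 2\eps}_T$ and the uniqueness assertion in Proposition~\ref{solntoSCGL11}. For continuous dependence, I would rerun the same Duhamel estimate on $v_1 - v_2$, linearizing the nonlinearity by writing $v_1^i \overline{v_1}^j - v_2^i \overline{v_2}^j$ as a sum of products containing the factor $v_1 - v_2$ (an identity analogous to \eqref{EQU: poly dif}), and absorbing the resulting self-referential contribution by taking $T$ small in terms of $R$. The principal obstacle, and the only place where the sharp threshold $p > 2m-1$ enters decisively, is guaranteeing that the singular factor $t'^{-(2m-1)(\eps+1/p)}$ remains integrable against the heat-smoothing factor $(t-t')^{-3\eps/2}$.
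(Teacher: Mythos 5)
Your argument is correct, and it arrives at the same numerology as the paper by a genuinely different (and arguably cleaner) route. The paper proves Proposition \ref{smoother ID} by running a fresh contraction mapping in the ball of the weighted space $\norm{\cdot}_{\eps+\frac{1}{p},2\eps,T}$, producing the weight directly from the smoothing estimate $\norm{S(t)v_0}_{C^{2\eps}}\lesssim \norm{S(t)v_0}_{B^{2\eps+2/p}_{p,\infty}}\lesssim t^{-\eps-\frac{1}{p}}\norm{v_0}_{L^p}$ (Propositions \ref{besovembed} and \ref{PROP: Heat smooth}), and then cites \cite[Proposition 6.2]{MW1} for the passage to $C([0,T];C^{2\eps})$. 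You instead embed $L^p\hookrightarrow B^0_{p,\infty}\hookrightarrow C^{-2/p}$ on $\T^2$ and invoke Proposition \ref{solntoSCGL11} with $s_0=-\frac{2}{p}$; since the weight in $X^{-2/p,2\eps}_T$ is $t^{\frac{2\eps+2/p}{2}}=t^{\eps+\frac{1}{p}}$, this is the same estimate in disguise, and the hypothesis $-(2m-1)\frac{s_0}{2}<1$ becomes exactly $p>2m-1$. What your reduction buys is that the fixed-point argument need not be repeated and the existence time is controlled by the weaker norm $\norm{v_0}_{C^{-2/p}}\lesssim\norm{v_0}_{L^p}$; what it costs is that the continuity at $t=0$ in $C^{2\eps}$ must be made explicit, which you do correctly: the linear part is handled by the strong continuity of $S(t)$ on $C^{2\eps}$, and the Duhamel part is $O(t^\alpha)$ in $C^{2\eps}$ with $\alpha=1-\frac{3\eps}{2}-(2m-1)(\eps+\frac{1}{p})>0$ by Lemma \ref{intest}, again precisely because $p>2m-1$. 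Your uniqueness argument (via the inclusion $C([0,T];C^{2\eps})\subset X^{-2/p,2\eps}_T$ and the uniqueness clause of Proposition \ref{solntoSCGL11}) and your continuous-dependence argument (linearizing with \eqref{EQU: poly dif} and absorbing the self-referential term for $T$ small) are both sound and coincide in substance with what the paper sketches.
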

In light of the instantaneous smoothing from regularity $s_0$ to $2\eps$ in Proposition \ref{solntoSCGL11}, this proposition will allow us to prove global well-posedness by demonstrating an a priori estimate on the growth of the $L^p$ norm of solutions to \eqref{SCGL11}.

\subsection{Proof of local well-posedness results}

Before we prove the above propositions we first state and prove a useful elementary lemma.

\begin{lemma}\label{intest}
Suppose $\alpha,\beta\in \R$ satisfy $\alpha<1$ and $\beta<1$. Then,
\begin{equation*}
\int_0^t (t-s)^{-\alpha}s^{-\beta}\,ds\sim t^{1-\alpha-\beta}.
\end{equation*}
\end{lemma}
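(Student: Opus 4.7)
The plan is a direct reduction to the Beta function via a rescaling. First I would apply the substitution $s = tu$ (so $ds = t\,du$) to the integral, which factors out all the $t$-dependence cleanly:
\begin{equation*}
\int_0^t (t-s)^{-\alpha} s^{-\beta}\,ds
= \int_0^1 t^{-\alpha}(1-u)^{-\alpha}\, t^{-\beta} u^{-\beta}\, t\,du
= t^{1-\alpha-\beta}\int_0^1 (1-u)^{-\alpha} u^{-\beta}\,du.
\end{equation*}

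Next I would verify that the remaining $u$-integral is a finite positive constant independent of $t$. The integrand is nonnegative, and the only possible sources of divergence are the endpoints $u=0$ and $u=1$. Near $u=0$ the integrand behaves like $u^{-\beta}$, which is integrable since $\beta < 1$; near $u=1$ it behaves like $(1-u)^{-\alpha}$, which is integrable since $\alpha < 1$. Thus
\begin{equation*}
C(\alpha,\beta) \stackrel{\text{def}}{=} \int_0^1 (1-u)^{-\alpha} u^{-\beta}\,du = B(1-\beta,\,1-\alpha) \in (0,\infty),
\end{equation*}
so the equality above gives $\int_0^t (t-s)^{-\alpha}s^{-\beta}\,ds = C(\alpha,\beta)\, t^{1-\alpha-\beta}$, which is the desired $\sim t^{1-\alpha-\beta}$ statement with both implicit constants equal to $C(\alpha,\beta)$.

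There is essentially no obstacle here: the lemma is a one-line change of variables followed by the standard Beta-function convergence check. The hypotheses $\alpha<1$ and $\beta<1$ are used exactly to guarantee that $C(\alpha,\beta)$ is finite (and they are also sharp for finiteness of the original integral). No additional machinery from earlier in the paper is needed.
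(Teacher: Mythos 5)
Your proof is correct, but it takes a different route from the paper. The paper's argument splits the integral at $s=t/2$: on $[0,t/2]$ one has $t-s\sim t$ so $(t-s)^{-\alpha}\sim t^{-\alpha}$, and on $[t/2,t]$ one has $s\sim t$ so $s^{-\beta}\sim t^{-\beta}$; each remaining one-sided integral is then evaluated directly, using $\beta<1$ (resp.\ $\alpha<1$) for convergence at the surviving singular endpoint. Your substitution $s=tu$ instead scales out the $t$-dependence in one step and identifies the constant exactly as the Beta function $B(1-\beta,1-\alpha)$, so you obtain an exact identity
\begin{equation*}
\int_0^t (t-s)^{-\alpha}s^{-\beta}\,ds = B(1-\beta,1-\alpha)\,t^{1-\alpha-\beta},
\end{equation*}
which is slightly stronger than the two-sided comparison the paper derives (though the extra precision is not needed anywhere in the paper). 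The splitting argument has the advantage of generalizing to kernels that are not exactly homogeneous, which is presumably why it is the standard idiom in this literature, but for this particular lemma your scaling argument is shorter and cleaner. Both proofs use the hypotheses $\alpha<1$, $\beta<1$ in the same essential way, namely integrability of the singularities at the two endpoints.
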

\begin{proof}
We split the integral into two parts and estimate each piece separately:
\begin{align*}
\begin{split}
\int_0^t(t-s)^{-\alpha}s^{-\beta}\,ds&=\int_{0}^{t/2}(t-s)^{-\alpha}s^{-\beta}\,ds+\int_{t/2}^t(t-s)^{-\alpha}s^{-\beta}\,ds\\
&\sim t^{-\alpha}\int_0^{t/2}s^{-\beta}\,ds + t^{-\beta}\int_{t/2}^t(t-s)^{-\alpha}\,ds\\
&\sim t^{1-\alpha-\beta}
\end{split}
\end{align*}
where in the last line we simply evaluated the two integrals.
\end{proof}

The following local well-posedness proof, using the Da Prato-Debussche trick, the linear heat smoothing estimate \eqref{PROP: Heat smooth} and the product estimate \eqref{applicparaprod} is standard, see for example, \cite{DPD,MW1,TW}. For completeness we go through the argument here.

\begin{proof}[Proof of Proposition~\ref{solntoSCGL11}]
We will first show the existence of the solution in $X^{s_0,2\eps}_T$. 

Suppose $\norm{\vec{z}}_{\widehat{C}^{-\eps}_T},\norm{v_0}_{C^{s_0}}\leq R$. For $R_0>R$ yet to be chosen, let $B_{R_0}$ be the ball of radius $R_0$ and center $0$ in $X$. We aim to show the map
\begin{equation*}
\Gamma v(t)=S(t)v_{0}+\int_{0}^{t}S(t-t')F(v,\vec{z})(t')\,dt'
\end{equation*}
is a contraction mapping on $B_{R_0}$. The linear heat smoothing estimate, Proposition \ref{PROP: Heat smooth}, gives,
\begin{align*}
\norm{S(t)v_0}_{C^{s_0}}\lesssim t^{\frac{s_0-2\eps}{2}}\norm{v_0}_{C^{2\eps}}.
\end{align*}
From Proposition \ref{applicparaprod} we have,
\begin{align}\label{Fvz est}
\norm{F(v,\vec{z})(t')}_{C^{-\eps}}\lesssim \sum\limits_{\substack{0\leq i\leq m\\0\leq j\leq m-1}}\norm{z_{i,j}}_{C^{-\eps}}\norm{v}^{i+j}_{C^{2\eps}}\lesssim R_0^{2m-1}(t')^{(2m-1)\frac{s_0-2\eps}{2}}
\end{align}
where we used the fact that ${t'}^\frac{2\eps-s_0}{2}\norm{v(t')}_{C^{2\eps}}\leq R_0 $ for $v\in B_{R_0}$ and the fact that $\norm{z_{i,j}}_{C^{-\eps}}\leq\norm{\vec{z}}_{\widehat{C}^{-\eps}}\leq R\leq R_0$.
We then have,
\begin{align}\label{EQU: Est in LWP 1}
\begin{split}
t^{\frac{2\eps-s_0}{2}}\norm{\Gamma v}_{C^{2\eps}}&\lesssim \norm{v_0}_{C^{s_0}}+t^\frac{2\eps-s_0}{2}\int_{0}^{t}(t-t')^{-\frac{3\eps}{2}}\norm{F(v,\vec{z})(t')}_{C^{-\eps}}\,dt'\\
&\lesssim \norm{v_0}_{C^{s_0}}+R_0^{2m-1}t^\frac{2\eps-s_0}{2}\int_{0}^{t}(t-t')^{-\frac{3\eps}{2}}(t')^{(2m-1)\frac{s_0-2\eps}{2}}\,dt'\\
&\lesssim \norm{v_0}_{C^{s_0}}+R^{2m-1}_0t^{1-\frac{3\eps}{2}+(2m-2)\left(\frac{s_0-2\eps}{2}\right)}
\end{split}
\end{align}
where in the final inequality we used Lemma \ref{intest} and the condition \eqref{indicecond}. 
Taking a supremum we have
\begin{equation}\label{EQU: Time weight gamma est}
\norm{\Gamma v}_{\frac{2\eps-s_0}{2},2\eps,T}\leq C\norm{v_0}_{C^{s_0}}+CR^{2m-1}_0T^{1-\frac{3\eps}{2}+(2m-2)\left(\frac{s_0-2\eps}{2}\right)}\nonumber
\end{equation}
where $C$ is the implicit constant \eqref{EQU: Est in LWP 1}.

Similarly, choosing $\eps$ small enough so that $s_0<-\eps$ and using \eqref{Fvz est},
\begin{align}
 \norm{\Gamma v}_{C^{s_0}}&\lesssim \norm{v_0}_{C^{s_0}}+\int_0^t\norm{F(v,\vec{z})(t')}_{C^{-\eps}}\,dt'\nonumber\\
 &\lesssim \norm{v_0}_{C^{s_0}} + R_0^{2m-1}t^{1+(2m-1)\frac{s_0-2\eps}{2}}.\label{EQU: Gamma v bound}
\end{align}
Taking a supremum we have,
\begin{equation}\label{EQU: no time weight gamma est}
   \norm{\Gamma v}_{L^\infty([0,T];C^{s_0})} \leq C\norm{v_0}_{C^{s_0}} + CR_0^{2m-1}T^{1+(2m-1)\frac{s_0-2\eps}{2}}.
\end{equation}
Adding \eqref{EQU: no time weight gamma est} and \eqref{EQU: Time weight gamma est} and choosing $\eps$ small so that $\frac{s_0-2\eps}{2}<-\frac{3\eps}{2}$,
\begin{equation*}
    \norm{\Gamma v}_{X^{s_0,2\eps}_T}\leq C\norm{v_0}_{C^{s_0}}+CR_0^{2m-1}T^{1+(2m-1)\frac{s_0-2\eps}{2}}.
\end{equation*}
By the condition \eqref{indicecond} we can choose $\eps>0$ small enough so that
\begin{equation*}
\theta:= 1+(2m-1)\left(\frac{s_0-2\eps}{2}\right)>0
\end{equation*}
and so the power of $T$ is positive. Hence choosing $R_0=2CR$ and $T$ satisfying 
\begin{equation*}
  CR^{2m}T^{1-\frac{3\eps}{2}+(2m-2)\frac{s_0-2\eps}{2}}\leq \frac{R}{2}  
\end{equation*}
we find that $\Gamma$ maps $B_{R_0}$ to $B_{R_0}$. Now we verify the contraction property. It follows form Proposition \ref{PROP: algebra prop} and \eqref{EQU: poly dif} that for $v_1,v_2\in B_{R_0}$,
\begin{equation}
\norm{v_1^i\overline{v_1}^j-v_2^i\overline{v_2}^j}_{C^{2\eps}}\lesssim (t')^{(i+j-1)\frac{s_0-2\eps}{2}}R^{i+j-1}\norm{v_1-v_2}_{C^{2\eps}}.\nonumber
\end{equation}
Hence we have the difference estimate 
\begin{equation}\label{difest}
    \norm{F(v_1,\vec{z})(t')-F(v_2,\vec{z})(t')}_{C^{-\eps}}\lesssim (t')^{(2m-1)\frac{s_0-2\eps}{2}}R^{2m-1}\norm{v_1-v_2}_{C^{2\eps}}.
\end{equation}
Using this estimate and estimates similar to those in \eqref{EQU: Est in LWP 1} and \eqref{EQU: Gamma v bound} we can show that $\Gamma: B_{R_0}\rightarrow B_{R_0}$ is a contraction mapping. By the contraction mapping theorem it follows that $\Gamma$ has a unique fixed point and hence, \eqref{SCGL11} has a solution in $X^{s_0,2\eps}_T$.

Using Gr\"onwall and standard PDE techniques the uniqueness of the solution on $B_{R_0}$ can be extended to all of $X^{s_0,2\eps}_T$. Using \eqref{Prop: Time cont} and standard PDE techniques it can be shown that the solution we constructed above is in fact in $C((0,T];C^{2\eps})\cap C([0,T];C^{s_0})$. Further using standard PDE techniques it can be shown that the solution depends continuously on the noise and initial data. 

The proofs of these three statements are quite standard. We will just prove the continuous dependence. Let 
\begin{equation*}
v_1(t)=S(t)v_0+\int_0^tS(t-t')F(v_1,\vec{z})(t')\,dt'
\end{equation*}
and 
\begin{equation*}
v_2(t)=S(t)u_0+\int_0^tS(t-t')F(v_2,\vec{x})(t')\,dt'
\end{equation*}
be the solutions on $[0,T]$ for $T\sim R^{-\theta}$ constructed by the above contraction mapping argument. Adding and subtracting $F(v_1,\vec{x})$ we have
\begin{align*}
F(v_1,&\vec{z})-F(v_2,\vec{x})=\\
&+(c_1+ic_2)\sum\limits_{\substack{0\leq i\leq m\\0\leq j\leq m-1}}\binom{m}{i}\binom{m-1}{j}(z_{m-i,m-1-j}-x_{m-i,m-1-j})v_1^i\overline{v_1}^j\\
&+(c_1+ic_2)\sum\limits_{\substack{0\leq i\leq m\\0\leq j\leq m-1}}\binom{m}{i}\binom{m-1}{j}x_{m-i,m-1-j}(v_1^i\overline{v_1}^j-v_2^i\overline{v_2}^j)\\
&= \textnormal{(I)} + \textnormal{(II)}
\end{align*}
Using estimates similar to \eqref{Fvz est} to estimate \textnormal{(I)} and  \eqref{difest} to estimate \textnormal{(II)},
\begin{align*}
\norm{F(v_1,\vec{z})(t')-F(v_2,\vec{x})(t')}_{C^{-\eps}}&\lesssim R^{2m-2}_0(t')^{(2m-2)\frac{s_0-2\eps}{2}}\norm{v_1-v_2}_{\frac{2\eps-s_0}{2},2\eps,T}\\
&+R^{2m-1}_0(t')^{(2m-1)\frac{s_0-2\eps}{2}}\norm{\vec{z}-\vec{x}}_{\widehat{C}^{-\eps}_T}.
\end{align*}
So,
\begin{align*}
\norm{v_1-v_2}_{\frac{2\eps-s_0}{2},2\eps,T}  &\lesssim \norm{v_0-u_0}_{C^{s_0}}+T^{\frac{2\eps-s_0}{2}}\int_0^T(t')^{-\frac{3\eps}{2}}\norm{F(v_1,\vec{z})-F(v_2,\vec{x})}_{C^{-\eps}}\,dt'\\
&\lesssim \norm{v_0-u_0}_{C^{s_0}}\\
&\hspace{0.5cm}+T^{1-\frac{3\eps}{2}-(2m-3)\frac{2\eps-s_0}{2}}R_0^{2m-1}\left(\norm{\vec{z}-\vec{x}}_{\widehat{C}^{-\eps}_T} +\norm{v_1-v_2}_{X^{s_0,2\eps}_T} \right).
\end{align*}
Similarly we have
\begin{equation*}
   \norm{v_1-v_2}_{L^\infty([0,T];C^{s_0})}\lesssim \norm{v_0-u_0}_{C^{s_0}}+ T^{1-(2m-2)\frac{2\eps-s_0}{2}}R_0^{2m-1}\big(\norm{\vec{z}-\vec{x}}_{\widehat{C}^{-\eps}_T} +\norm{v_1-v_2}_{X^{s_0,2\eps}_T} \big). 
\end{equation*}
Adding the above estimates gives,
\begin{equation*}
    \norm{v_1-v_2}_{X^{s_0,2\eps}_T}\leq C\norm{v_0-u_0}_{C^{s_0}}+CT^{1-(2m-2)\frac{2\eps-s_0}{2}}R_0^{2m-1}\big(\norm{\vec{z}-\vec{x}}_{\widehat{C}^{-\eps}_T} +\norm{v_1-v_2}_{X^{s_0,2\eps}_T} \big).
\end{equation*}

Choosing $T$ small enough, we can bring $\frac{1}{2}\norm{v_1-v_2}_{X^{s_0,2\eps}_T}$ to the left hand side of the above inequality giving,
\begin{equation*}
\norm{v_1-v_2}_{X^{s_0,2\eps}_T}\lesssim \norm{v_0-u_0}_{C^{s_0}}+ \norm{\vec{z}-\vec{x}}_{\widehat{C}^{-\eps}_T}.
\end{equation*}
\end{proof}

We now outline the proof of Proposition \ref{smoother ID}. For more details we refer the reader to \cite{MW1} where a similar result is proven for the two dimensional SQE.

\begin{proof}[Proof of Proposition~\ref{smoother ID}]
Suppose $\norm{\vec{z}}_{\widehat{C}^{-\eps}},\norm{v_0}_{L^p}\leq R$. We let $B$ denote the Banach space defined through the norm $\norm{\cdot}_{\eps+\frac{1}{p},2\eps,T}$. Following \cite[Theorem 6.2]{MW1} we will first show that there exists a solution in $B$. Then we will show that the solution constructed is in fact in $C([0,T];C^{2\eps})$. For $R_0>R$ yet to be chosen let $B_{R_0}$ be the ball of radius $R_0$ and center $0$ measured in the norm $\norm{\cdot}_{\eps+\frac{1}{p},2\eps,T}$. From the mild formulation we have,
\begin{equation*}
\norm{\Gamma v(t)}_{C^{2\eps}}\leq \norm{S(t)v_0}_{C^{2\eps}}+\int_0^t(t-t')^{-\frac{3\eps}{2}}\norm{F(v,\vec{z})(t')}_{C^{-\eps}}\,dt'.
\end{equation*}
Using Proposition \ref{besovembed} and Proposition \ref{PROP: Heat smooth},
\begin{equation*}
    \norm{S(t)v_0}_{C^{2\eps}}\lesssim \norm{S(t)v_0}_{B^{2\eps+\frac{2}{p}}_{p,\infty}}\lesssim t^{-\eps-\frac{1}{p}}\norm{v_0}_{L^p}.
\end{equation*}
Using the above estimate and an estimate similar to \eqref{Fvz est} in the proof of Proposition \ref{solntoSCGL11} we have
\begin{equation*}
t^{\eps+\frac{1}{p}}\norm{\Gamma v(t)}_{C^{2\eps}}\lesssim \norm{v_0}_{L^p}+t^{\eps+\frac{1}{p}}\int_0^t(t-t')^{-\frac{3\eps}{2}}(t')^{-(2m-1)(\eps+\frac{1}{p})}R_0^{2m-1}\,dt'.
\end{equation*}
If $2m-1<p$ and $\eps>0$ is small enough then the integral in the above equation can be evaluated using Lemma \ref{intest} and taking a supremum,
\begin{equation*}
    \norm{\Gamma v}_{\eps+\frac{1}{p},2\eps,T}\leq C\norm{v_0}_{L^p}+CR_0^{2m-1}T^{1-\frac{3\eps}{2}-(2m-2)(\eps+\frac{1}{p})}.
\end{equation*}
Choosing $R_0=2CR$ and $T$ so that
\begin{equation*}
    CR_0^{2m-1}T^{1-\frac{3\eps}{2}-(2m-2)(\eps+\frac{1}{p})}\leq \frac{1}{2}R
\end{equation*}
it follows that $\Gamma$ maps $B_{R_0}$ to itself. Using arguments similar to those in the proof of Proposition \ref{solntoSCGL11} one can verify a difference estimate for $\Gamma$. Hence by the Contraction Mapping Theorem, $\Gamma$ has a fixed point.

Using \ref{Prop: Time cont} and arguments in \cite[Proposition 6.2]{MW1} one can show the solution constructed above is in fact in $C([0,T];C^{2\eps})$ and is unique in this space.

The proof of the continuous dependence on $v_0\in C^s$ and $\vec{z}$ is similar to the proof of continuous dependence in the proof of Proposition \ref{solntoSCGL11}.
\end{proof}

\section{Global well-posedness of the renormalized SCGL}\label{Sect: GWP}

In this section we place the additonal assumption that $c_1<0$. This means that the nonlinearity is defocusing with respect to the heat part of SCGL. 

In this section we will prove Theorem \ref{GWP}. To do this we will prove the following global well-posedness result for \eqref{SCGL11}. 

\begin{proposition}\label{GWPOFscgl11}
Let $m\geq 2$ be an integer and suppose $s_0>-\frac{2}{2m-1}$. Set $r=\big|\frac{a_2}{a_1}\big|$. Suppose 
\begin{equation*}
2m-1<2+2(r^2+2r\sqrt{1+r^2})
\end{equation*}
and suppose $\eps=\eps(m,r)>0$ is sufficiently small.
Then for any $T>0$, $v_0\in C^{s_0}$ and $\widehat{z}\in \widehat{C}^{-\eps}_T$ there exists a unique solution $v$ to \eqref{SCGL11} with  $v\in C((0,T];C^{2\eps})\cap C([0,T];C^{s_0})$.
\end{proposition}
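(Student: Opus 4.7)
The plan is to reduce global well-posedness to a polynomial-in-time a priori bound on $\|v(t)\|_{L^p}$ for a suitable exponent $p$, which then interfaces with the local theory to preclude finite-time blow-up.

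First I would fix $p$ with $2m-1 < p$ and $p < 2+2(r^2+2r\sqrt{1+r^2})$; the hypothesis on $m$ guarantees such a $p$ exists. Using Proposition \ref{solntoSCGL11}, starting from $v_0 \in C^{s_0}$ we get a local solution with $v(t) \in C^{2\eps}$ for all $t$ in a (possibly very short) positive time interval, and by Sobolev embedding $v(t_0) \in L^p$ for any $t_0>0$. Proposition \ref{smoother ID} then furnishes a blow-up criterion: the maximal lifespan of the local solution constructed from an $L^p$ datum is bounded below by $\|v\|_{L^p}^{-\theta}$, so the solution can be extended as long as the $L^p$ norm does not blow up. Hence it suffices to prove an a priori estimate of the form $\|v(t)\|_{L^p} \le C(T,\|v_0\|,\|\vec z\|)$ for $t \in [0,T]$.

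Second, to obtain this a priori bound I would perform a standard energy estimate at the level of a smooth approximation (say, after Galerkin truncation, justified by the continuous dependence in Proposition \ref{smoother ID}). Taking the $L^2$ inner product of \eqref{SCGL11} with $|v|^{p-2}\overline{v}$ and extracting real parts, the Laplacian contribution, after integration by parts, yields
\begin{equation*}
\Re\int |v|^{p-2}\bar v\,(a_1+ia_2)\Delta v
= -\int\!\bigl[a_1|v|^{p-2}|\nabla v|^2+\tfrac{p-2}{4}a_1|v|^{p-4}(\nabla|v|^2)^2\bigr] - \tfrac{p-2}{2}a_2\int |v|^{p-4}\,\nabla|v|^2\cdot(-2\Im(\bar v\nabla v)),
\end{equation*}
which assembles into a quadratic form $A$ in the two real vector fields $\nabla|v|^2$ and $-2\Im(\bar v\nabla v)$, with coefficients depending only on $(p,a_1,a_2)$. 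The nonlinear term with the main noise factor ($i=m$, $j=m-1$) contributes $-c_1\int|v|^{p+2m-2}$, a good (coercive) sign. Each of the remaining $(i,j)$ terms couples a monomial $v^i\bar v^j|v|^{p-2}\bar v$ with a Wick-ordered factor $z_{m-i,m-1-j} \in C^{-\eps}$; I would estimate these using Proposition \ref{besovdual} paired with Proposition \ref{nablaest} (exactly as in the real-valued SQE treatment in \cite{MW1,TW}) to obtain bounds of the form $\|z\|_{C^{-\eps}} \cdot (\text{small part of the coercive nonlinearity}) + C\|z\|^K$, so that everything but an arbitrarily small fraction of $c_1\int|v|^{p+2m-2}$ can be absorbed.

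Third, the central computational step is the positivity of the quadratic form $A$. Following the argument of \cite{DGL} (see also \cite{GV,Hos}), diagonalizing $A$ and requiring positive eigenvalues amounts to the condition $(p-2)^2 a_2^2 \le 4 a_1\bigl((p-2)a_1 + \text{cross term}\bigr)$, which, after solving the resulting quadratic in $p$, is precisely $2 < p < 2+2(r^2+2r\sqrt{1+r^2})$. Under this constraint the $A$-term is nonnegative and can be discarded on the right, yielding the differential inequality
\begin{equation*}
\tfrac{d}{dt}\|v\|_{L^p}^p + c\|v\|_{L^{p+2m-2}}^{p+2m-2} \le C\bigl(1+\|\vec z\|_{\widehat C_T^{-\eps}}^K\bigr),
\end{equation*}
from which $\|v(t)\|_{L^p}^p \lesssim \|v(t_0)\|_{L^p}^p + (t-t_0)(1+\|\vec z\|^K)$ is immediate.

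Finally, combining the two constraints $p > 2m-1$ and $p < 2+2(r^2+2r\sqrt{1+r^2})$ is exactly the hypothesis of the proposition, so such a $p$ can be chosen. The a priori bound, together with the $L^p$-blow-up criterion from the first step, forces the local solution to extend to all of $[0,T]$, with the regularity $v\in C((0,T];C^{2\eps})\cap C([0,T];C^{s_0})$ inherited from Proposition \ref{solntoSCGL11} applied on each sub-interval; uniqueness is likewise inherited. The main obstacle is the algebraic verification that $A$ is genuinely positive definite under the stated constraint on $p$ — once this is in hand and the Wick noise terms are controlled by the standard paraproduct estimates of Section \ref{Sec: Basic ests}, the remainder of the argument is a direct Grönwall-type continuation.
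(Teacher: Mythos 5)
Your proposal is correct and follows essentially the same route as the paper: local theory plus an $L^p$ blow-up criterion from Proposition \ref{smoother ID}, a testing-against-$|v|^{p-2}v$ identity whose dispersive cross terms assemble into a quadratic form in $\nabla|v|^2$ and $\Im(\overline{v}\nabla v)$ that is positive definite precisely for $2<p<2+2(r^2+2r\sqrt{1+r^2})$, noise terms controlled by Besov duality and interpolation as in \cite{MW1,TW}, and a smooth truncated approximation passed to the limit via continuous dependence. The only detail worth flagging is that a small multiple of the dissipation $\int|v|^{p-2}|\nabla v|^2\,dx$ must be retained on the left-hand side rather than discarded with the rest of the quadratic form, since the duality estimate of the Wick terms produces factors of $\norm{v^{p-2}|\nabla v|^2}_{L^1}^{\eps/2}$ (and, via the Sobolev trick of \cite{TW}, powers of the same quantity coming from $\norm{v^{p+4m-6}}_{L^1}$) that are absorbed exactly there.
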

\noindent
From Proposition \ref{PROP:sconv} $\{:\! \Psi^k\overline{\Psi^\ell}\!:\}_{k,\ell}\in \widehat{C}^{-\eps}_T$. Thus, if we can prove the above proposition, the Theorem \ref{GWP} will follow

To prove this, we will establish an a priori $L^p$ bound coming from a ``Testing against $v^{p-1}$'' identity. This is similar to the method in \cite{MW1,TW,MW2,Hos}. However, as in \cite{Hos} our situation is more delicate than the situation in \cite{MW1,TW,MW2}. Due to some extra terms appearing in our ``Testing against $v^{p-1}$'' identity, we are only be able to establish a suitable a priori $L^p$ bound for small $p$. However, for the $L^p$ norm to control the time of existence in Proposition \ref{smoother ID} we need $p>2m-1$. Hence we only get global well-posedness when these two ranges overlap.

We now state and prove the ``Testing against $v^{p-1}$'' identity previously alluded to.

\begin{proposition}\label{PROP: Lp inequality}
Let $T>0$ be fixed and $m\geq 2$ be an integer. Set $r=\big|\frac{a_2}{a_1}\big|$. Suppose 
\begin{equation*}
2m-1<2+2(r^2+2r\sqrt{1+r^2})
\end{equation*}
and suppose $\eps=\eps(m,r)>0$ is sufficiently small. Further, suppose $v_0 \in C^\infty_x$, $\widehat{z}\in \widehat{C}^\infty_T$ and $v\in C^\infty_tC^\infty_x$ solves \eqref{SCGL11}. Then, for $\eta >0$ small enough, $v$ satisfies the following inequality

\begin{align}\label{Lpineq}
\begin{split}
\frac{1}{p}\left(\norm{v(t)}_{L^p}^p-\norm{v(t_0)}_{L^p}^p\right)&+\int_{t_0}^t \norm{v^{p+2m-2}(t')}_{L^1}\,dt'+4\eta a_1\int_{t_0}^t\norm{v^{p-2}|\nabla v|^2(t')}_{L^1}\,dt'\\
&\leq\int_{t_0}^t|\langle F_0(v,\vec{z}),|v|^{p-2}v\rangle|(t')\,dt'
\end{split}
\end{align}
where
\begin{equation*}
F_0(v,\vec{z})=(c_1+ic_2)\sum\limits_{\substack{0\leq i\leq m\\0\leq j\leq m-1\\ (i,j)\neq (m,m-1)}}\binom{m}{i}\binom{m-1}{j}z_{m-i,m-1-j}v^i\overline{v}^j.
\end{equation*}
\end{proposition}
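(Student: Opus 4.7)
The plan is to test equation~\eqref{SCGL11} against $|v|^{p-2}v$ in the $L^2$ pairing, take the real part, and peel off three non-negative contributions on the left-hand side of the resulting identity. I begin with the pointwise identity $\partial_t|v|^p = p\,\Re(|v|^{p-2}\bar v\,\partial_t v)$, which integrates to $\tfrac{1}{p}\partial_t\|v\|_{L^p}^p = \Re\langle \partial_t v,\,|v|^{p-2}v\rangle$; inserting~\eqref{SCGL11} splits the right-hand side into a linear contribution from $(a_1+ia_2)[\Delta-1]v$ and a nonlinear contribution from $F(v,\vec z)$.

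The linear $-v$ term yields $-a_1\|v\|_{L^p}^p\leq 0$, which only improves the inequality and may be dropped. For the Laplacian piece I integrate by parts and expand $\nabla(|v|^{p-2}\bar v)$ using the chain rule together with $\nabla|v|^2 = 2\Re(\bar v\nabla v)$. Setting $\alpha := \Re(\bar v\nabla v) = \tfrac12\nabla|v|^2$ and $\beta := \Im(\bar v\nabla v)$, a bookkeeping calculation yields
\begin{equation*}
\Re\bigl\langle (a_1+ia_2)\Delta v,\,|v|^{p-2}v\bigr\rangle = -\int_{\T^2} |v|^{p-4}\,Q(\alpha,\beta)\,dx,
\end{equation*}
where
\begin{equation*}
Q(\alpha,\beta) := (p-1)\,a_1\,|\alpha|^2 + a_1\,|\beta|^2 - (p-2)\,a_2\,\alpha\cdot\beta.
\end{equation*}
The crucial identity $|\alpha|^2 + |\beta|^2 = |v|^2|\nabla v|^2$ means that any pointwise bound $Q(\alpha,\beta) \geq 4\eta a_1(|\alpha|^2+|\beta|^2)$ immediately produces the gradient term $4\eta a_1\int_{\T^2}|v|^{p-2}|\nabla v|^2\,dx$ appearing on the left-hand side of~\eqref{Lpineq}.

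For the nonlinearity I isolate the $(i,j)=(m,m-1)$ term in $F$, which corresponds to the trivial Wick monomial $z_{0,0} = \,:\!\Psi^0\overline{\Psi}^0\!:\,\equiv 1$ and so reduces to $(c_1+ic_2)|v|^{2m-2}v$; testing this against $|v|^{p-2}v$ and taking the real part gives $c_1\int_{\T^2}|v|^{p+2m-2}\,dx$. Under the sign hypothesis $c_1<0$ of this section this is non-positive, and after time integration it moves to the left-hand side as the coercive contribution $\int_{t_0}^t\|v^{p+2m-2}(t')\|_{L^1}\,dt'$ recorded in~\eqref{Lpineq}. The remaining terms in the sum make up exactly $F_0(v,\vec z)$; bounding their inner product in absolute value places them on the right-hand side, and integrating the differential identity over $[t_0,t]$ delivers~\eqref{Lpineq}.

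The main obstacle is verifying the pointwise coercivity $Q(\alpha,\beta) \geq 4\eta a_1(|\alpha|^2+|\beta|^2)$ for some $\eta>0$. This reduces to positive-definiteness of the associated $2\times 2$ symmetric matrix, equivalently $(p-1-4\eta)(1-4\eta)\,a_1^2 > (p-2)^2\,a_2^2/4$. Letting $\eta\downarrow 0$ and solving the resulting quadratic inequality in $p$ produces the explicit threshold in terms of the dissipation-dispersion ratio $r$ recorded in the hypothesis on $2m-1$. This step, implementing the mechanism of~\cite{DGL,GV,Hos} in the present complex-valued setting, is the only step requiring genuine work; everything else (the integration by parts, the chain rule for $|v|^{p-2}$, the isolation of $F_0$, and the $t$-integration) is routine.
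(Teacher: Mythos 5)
Your proposal is correct and follows essentially the same route as the paper: testing against $|v|^{p-2}v$, integrating by parts, isolating the $(i,j)=(m,m-1)$ term as the coercive power $c_1\int|v|^{p+2m-2}$, and reducing the gradient term to positive semi-definiteness of a $2\times 2$ quadratic form whose discriminant condition yields the stated threshold on $p$. Your variables $(\alpha,\beta)=(\Re(\bar v\nabla v),\Im(\bar v\nabla v))$ are just a rescaled relabeling of the paper's $f=i(\bar v\nabla v-v\nabla\bar v)$ and $g=\nabla|v|^2$, and your form $Q$ coincides with the paper's $A_{p,\eta}$ plus the peeled-off $4\eta a_1(|\alpha|^2+|\beta|^2)$, so the arguments are the same.
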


\begin{proof}
We will assume $a_2\geq 0$ as if $a_2<0$ we can take the conjugate of \eqref{SCGL12} so that $\overline{v}$ solves \eqref{SCGL12} with the sign of $a_2$ switched. As $v$ is sufficiently smooth we can compute, 
\begin{align}\label{partialLp}
\begin{split}
\frac{1}{p}\partial_t \norm{v(t)}_{L^p}^p&=\frac{1}{p}\partial_t \int_{\T^2}(v\overline{v})^{p/2}\,dx.\\
&=\frac{1}{2}\int_{\T^2}(v\overline{v})^{p/2-1}\left(v\partial_t\overline{v}+\overline{v}\partial_tv  \right)\,dx\\
&=\frac{1}{2}\int_{\T^2}(v\overline{v})^{p/2-1}\left((a_1-ia_2)v\Delta \overline{v}+(a_1+ia_2)\overline{v}\Delta v \right)\,dx\\
&\hphantom{=}+\int_{\T^2}(v\overline{v})^{p/2-1}\Re \left(\overline{v}F(v,\vec{z}\right)\,dx.
\end{split}
\end{align}
Integrating by parts and then applying the product rule gives,
\begin{align*}
\begin{split}
\frac{1}{2}\int_{\T^2}(v\overline{v})^{p/2-1}\left((a_1-ia_2)v\Delta \overline{v}\right)\,dx
&=-\frac{1}{2}(a_1-ia_2)\int_{\T^2}\nabla\left[(v\overline{v})^{p/2-1}v\right]\cdot\nabla\overline{v}\,dx\\
&=-\frac{p}{4}(a_1-ia_2)\int_{\T^2}|v|^{p-2}|\nabla v|^2\,dx\\
&\hphantom{=}-\frac{p-2}{4}(a_1-ia_2)\int_{\T^2}|v|^{p-4}v^2(\nabla\overline{v})^2\,dx.
\end{split}
\end{align*}
Here we are using the notation $v^2=v_1^2+v_2^2$ for $v\in \C^2$. Note that this is distinct from $|v|^2$. Using this expression we can write the first line in the third equality in \eqref{partialLp} as
\begin{align}\label{3rd line}
\begin{split}
\frac{1}{2}\int_{\T^2}(v\overline{v})^{p/2-1}\big((a_1-ia_2)v\Delta \overline{v}&+(a_1+ia_2)\overline{v}\Delta v \big)\,dx=\\
&-\frac{p}{2}a_1\int_{\T^2}|v|^{p-2}|\nabla v|^2\,dx\\
&-\frac{p-2}{4}a_1\int_{\T^2}|v|^{p-4}\left[v^2(\nabla \overline{v})^2+\overline{v}^2(\nabla v)^2  \right]\,dx\\
&-i\frac{p-2}{4}a_2\int_{\T^2}|v|^{p-4}\left[\overline{v}^2(\nabla v)^2-v^2(\nabla \overline{v})^2  \right]\,dx.
\end{split}
\end{align}
Making use of the identities
\begin{equation*}
v^2(\nabla \overline{v})^2+\overline{v}^2(\nabla v)^2=(v\nabla\overline{v}-\overline{v}\nabla v)^2+2|v|^2|\nabla v|^2,
\end{equation*}
\begin{equation*}
\nabla |v|^2=\overline{v}\nabla v+v\nabla\overline{v}
\end{equation*}
and
\begin{equation*}
4|v|^2|\nabla v|^2=(\nabla |v|^2)^2-(v\nabla\overline{v}-\overline{v}\nabla v)^2,
\end{equation*}
for $\eta>0$ we can write \eqref{3rd line} as
\begin{align*}
\begin{split}
\frac{1}{2}\int_{\T^2}(v\overline{v})^{p/2-1}\big(&(a_1-ia_2)v\Delta \overline{v}+(a_1+ia_2)\overline{v}\Delta v \big)\,dx=\\[5pt]
&\hphantom{=}-(p-1)a_1\int_{\T^2}|v|^{p-2}|\nabla v|^2\,dx
\frac{p-2}{4}a_1\int_{\T^2}|v|^{p-4}(v\nabla\overline{v}-\overline{v}\nabla v)^2\,dx\\[5pt]
&\hphantom{=}-\hphantom{=}i\frac{p-2}{4}a_2\int_{\T^2}|v|^{p-4}\nabla|v|^2\left( \overline{v}\nabla v-v\nabla\overline{v} \right)\,dx\\[5pt]
&=-4\eta a_1\int_{\T^2}|v|^{p-2}|\nabla v|^2\,dx
-\big(\frac{p-1}{4}-\eta\big)a_1\int_{\T^2}|v|^{p-4}(\nabla |v|^2)^2\,dx\\[5pt]
&\hphantom{=}+\big(\frac{1}{4}-\eta\big)a_1\int_{\T^2}|v|^{p-4}(v\nabla\overline{v}-\overline{v}\nabla v)^2\,dx\\[5pt]
&\hphantom{=}-\frac{p-2}{4}a_2\int_{\T^2}|v|^{p-4}\nabla|v|^2 i \left( \overline{v}\nabla v-v\nabla\overline{v} \right)\,dx\\[5pt]
&=-4\eta a_1\int_{\T^2}|v|^{p-2}|\nabla v|^2\,dx-\int_{\T^2}|v|^{p-4}A_{p,\eta}(f,g)\,dx
\end{split}
\end{align*}
where 
\begin{equation*}
f=i\left( \overline{v}\nabla v-v\nabla\overline{v} \right),\quad g=\nabla |v|^2
\end{equation*}
and $A_{p,\eta}(f,g)$ is the quadratic form
\begin{equation*}
A_{p,\eta}(f,g)=\big(\frac{1}{4}-\eta\big)a_1f^2+\frac{p-2}{4}a_2fg+\big(\frac{p-1}{4}-\eta\big)a_1g^2. 
\end{equation*}
Note that both $f$ and $g$ are real valued and so the quadratic form $A_{p,\eta}$ takes real arguments. If 
\begin{equation*}
2<p<2+2r(r+\sqrt{1+r^2})
\end{equation*}
then for small enough $\eta=\eta(p,r)$ the matrix 
\begin{equation*}
a_2\begin{pmatrix}
(\frac{1}{4}-\eta)r & \frac{p-2}{8}\\
\frac{p-2}{8} & (\frac{p-1}{4}-\eta)r
\end{pmatrix}
\end{equation*}
is non-negative definite, has non-negative trace and non-negative determinant, and so $A_{p,\eta}(f,g)\geq 0$. In this case
\begin{equation}\label{Lpineq1sthalf}
\frac{1}{2}\int_{\T^2}(v\overline{v})^{p/2-1}\left((a_1-ia_2)v\Delta \overline{v}+(a_1+ia_2)\overline{v}\eta v \right)\,dx\leq -4\eta a_1\int_{\T^2}|v|^{p-2}|\Delta v|^2\,dx.
\end{equation}
Note that the left hand side of the above inequality is real valued and so the inequality makes sense. We now consider the term on the second line of \eqref{partialLp}. As
\begin{equation*}
F(v,\vec{z})=(c_1+ic_2)|v|^{2m-2}v+F_0(v,\vec{z})
\end{equation*}
we have
\begin{equation}\label{Lpineq2half}
\int_{\T^2}(v\overline{v})^{p/2-1}\Re \left(\overline{v}F(v,\vec{z}\right)\,dx=c_1\int_{\T^2}|v|^{p+2m-2}\,dx+\Re\int_{\T^2}|v|^{p-2}\overline{v}F_0(v,\vec{z}).
\end{equation}
Putting \eqref{partialLp}, \eqref{Lpineq1sthalf} and \eqref{Lpineq2half} together after integrating from $t_0$ to $t$ gives the desired $L^p$ inequality
\begin{align*}
\frac{1}{p}\left(\norm{v(t)}_{L^p}^p-\norm{v(t_0)}_{L^p}^p\right)&+\int_{t_0}^t \norm{v^{p+2m-2}(t')}_{L^1}\,dt'+4\eta a_1\int_{t_0}^t\norm{v^{p-2}|\nabla v|^2(t')}_{L^1}\,dt'\\
&\leq\int_{t_0}^t|\langle F_0(v,\vec{z}),|v|^{p-2}v\rangle|(t')\,dt'.
\end{align*}
\end{proof}

It is not immediately clear how this proposition helps prove global well-posedness. Proposition \ref{PROP: Lp inequality} only holds for smooth initial data, noise and solutions to \eqref{SCGL11}. Without knowing the time continuity properties of $v$ it is not even clear \eqref{Lpineq} even makes sense for rough solutions of \eqref{SCGL11}. In \cite{MW1} this problem was solved by proving a certain amount of time continuity of $v$ and then proving an a priori bound of type \eqref{Lpineq} for rough $v$. In this paper we take an alternative PDE approach which we outline here.

Consider the solution $v_N$ of \eqref{SCGL12} with truncated forcing and initial data. That is the equation,
\begin{align}\label{SCGL11trunc}
\begin{cases}
\dt v_N=[(a_1+a_2)\Delta-1]v_N+F(v_N,\vec{z}_N)\\
v|_{t = 0} = \Slow v_0
\end{cases}
\end{align}
where 
\begin{equation*}
\vec{z}_N=\{\Slow z_{i,j}\}_{i,j}.
\end{equation*}
It can be shown that $v_N\in C_t^\infty C_x^\infty$ and hence $v_N$ is sufficiently regular for the hypothesis of Proposition \ref{PROP: Lp inequality} to hold. We then prove an a priori $L^p$ bound on $v_N$ that is independent of $N$. Using the fact that $\vec{z}_N\rightarrow \vec{z}$ in $\vec{C}^{-\eps}_T$ and $\Slow v_0\rightarrow v_0$ in $C^{s_0}$ one can can use the continuous dependence of the solution on $v_0$ and $\vec{z}$, from Section \ref{SEC:3} to show that $v$ is also a global solution.

With this in mind, to prove Proposition \ref{GWPOFscgl11}, it suffices to prove the following bound.

\begin{proposition}\label{truncLpbound}
Suppose $2<p<2+2(r^2+2r\sqrt{1+r^2})$ and $\eps>0$ is sufficiently small. Let $T>0$ and $0<t_0<T$. Then there exists $C=C(m,p,\eps,\vec{z})>0$ such that if $v_N$ is a solution to $\eqref{SCGL11trunc}$ on $[0,T]$ then for all $t\in [t_0,T]$,
\begin{equation*}
\norm{v_N(t)}_{L^p}\leq \norm{v_N(t_0)}_{L^p}+Ct.
\end{equation*}
\end{proposition}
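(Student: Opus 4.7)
The plan is to apply Proposition \ref{PROP: Lp inequality} directly to the smooth solution $v_N$ of \eqref{SCGL11trunc} (which satisfies the smoothness hypotheses since the truncated equation has smooth solutions), and then control the right-hand side by carefully bounding each term of $\langle F_0(v_N,\vec z_N), |v_N|^{p-2}v_N\rangle$ by a duality + interpolation argument. Writing $h_{i,j}=v_N^{i+1}\overline{v_N}^{\,j}|v_N|^{p-2}$, each term splits as a pairing
\begin{equation*}
|\langle z_{m-i,m-1-j}, h_{i,j}\rangle|\lesssim \|z_{m-i,m-1-j}\|_{C^{-\eps}}\|h_{i,j}\|_{B^{\eps}_{1,1}}
\end{equation*}
by Proposition \ref{besovdual}. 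Only indices with $(i,j)\neq(m,m-1)$ appear, so the total degree satisfies $i+j\le 2m-2$.

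Next I would apply Proposition \ref{nablaest} with $s=\eps$, which gives $\|h_{i,j}\|_{B^{\eps}_{1,1}}\lesssim \|h_{i,j}\|_{L^1}^{1-\eps}\|\nabla h_{i,j}\|_{L^1}^{\eps}+\|h_{i,j}\|_{L^1}$. The first factor is a pure $L^{p+i+j-1}$ norm of $v_N$, and for the gradient term the chain rule and Cauchy--Schwarz yield
\begin{equation*}
\|\nabla h_{i,j}\|_{L^1}\lesssim \int |v_N|^{p+i+j-2}|\nabla v_N|\,dx \lesssim \Bigl(\int|v_N|^{p-2}|\nabla v_N|^2\,dx\Bigr)^{1/2}\|v_N\|_{L^{p+2(i+j)-2}}^{(p+2(i+j)-2)/2}.
\end{equation*}
This produces the critical gradient quantity that is already on the left-hand side of \eqref{Lpineq} with a positive coefficient $4\eta a_1$, together with an intermediate $L^q$ norm. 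The remaining $L^q$ norms (with $q\le p+2(2m-2)-2$) are handled by two-dimensional Gagliardo--Nirenberg applied to $\phi=|v_N|^{p/2}$: using $\|\phi\|_{L^2}^2=\|v_N\|_{L^p}^p$ and $\|\nabla\phi\|_{L^2}^2\lesssim \int|v_N|^{p-2}|\nabla v_N|^2$, every such $\|v_N\|_{L^q}^q$ becomes a product of a power of $\|v_N\|_{L^p}$ and a (subunit) power of the gradient quantity.

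Having written each RHS term in the schematic form $C(\vec z)\, L^{a}A^{b}$ with $L=\|v_N\|_{L^p}$ and $A=\int|v_N|^{p-2}|\nabla v_N|^2$, I would apply Young's inequality with the exponent $b<1$ (a consequence of the factor $\eps$ coming from Proposition \ref{nablaest} and the subcritical nature of $i+j\le 2m-2$) to dominate it by $\tfrac12\eta a_1 A + C'(\vec z)\,L^{a'}$ for some $a'$. Choosing $\eta$ small, the gradient term is absorbed into the LHS, and the $L^{p+2m-2}$ dissipation term (also on the LHS, with a positive sign since $c_1>0$) absorbs, via Young again, any polynomial in $L$ with exponent strictly less than $p+2m-2$. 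The net result is the pointwise-in-time inequality
\begin{equation*}
\frac{1}{p}\partial_t\|v_N\|_{L^p}^p \le C(\vec z,m,p,\eps),
\end{equation*}
uniform in $N$ once $\vec z_N$ has been bounded in $\widehat C^{-\eps}_T$ using Remark \ref{REM: Smooth truncation sto conv}. Integrating from $t_0$ to $t$ and taking a $p$-th root yields the claimed bound $\|v_N(t)\|_{L^p}\le \|v_N(t_0)\|_{L^p}+Ct$.

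The main obstacle is the bookkeeping in Step~3: one must verify that for every admissible pair $(i,j)$ with $1\le i+j\le 2m-2$ the exponent $b$ on the gradient quantity coming out of the Gagliardo--Nirenberg/Cauchy--Schwarz estimate is strictly less than one, so that the absorption by Young's inequality is legal, and that the residual power $a'$ of $L$ is strictly less than $p+2m-2$ so that it can in turn be absorbed into the $L^{p+2m-2}$ term. Both are ensured by taking $\eps$ small enough in a way depending only on $m$, $p$, and $r$, which is precisely where the joint constraints $p>2m-1$ and $p<2+2(r^2+2r\sqrt{1+r^2})$ (whose compatibility is the hypothesis of the proposition) are used.
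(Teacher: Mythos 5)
Your proposal is correct and follows essentially the same route as the paper: apply the testing identity of Proposition \ref{PROP: Lp inequality}, bound each pairing by duality (Proposition \ref{besovdual}) and the interpolation estimate of Proposition \ref{nablaest}, extract the gradient quantity by Cauchy--Schwarz, control the high $L^q$ norms via the Sobolev embedding applied to $|v_N|^{p/2}$ (the paper's ``trick from \cite{TW}'', which is your Gagliardo--Nirenberg step), and absorb by Young's inequality with $\eps$ small. One minor quibble: the hypothesis $p>2m-1$ is not actually needed for this a priori bound --- it enters only later, when the $L^p$ norm is used to control the local existence time in Proposition \ref{smoother ID} --- whereas only $2<p<2+2(r^2+2r\sqrt{1+r^2})$ is used here, to guarantee the positivity of the quadratic form in Proposition \ref{PROP: Lp inequality}.
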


To prove this Proposition we use an almost identical proof to that in \cite{MW1,TW}. For completeness we present the details here.
\begin{proof}[Proof of Proposition ~\ref{truncLpbound}]
In the following we write $v$ instead of $v_N$ for simplicity. Set
\begin{equation*}
A_t=4\eta c_1\norm{v^{p-2}(t)|\nabla v(t)|^2}_{L^1} \quad \mbox{ and }\quad B_t=\norm{v^{p+2m-2}(t)}_{L^1}.
\end{equation*}
Recall
\begin{equation*}
F_0(v,\vec{z}_N)=(c_1+ic_2)\sum\limits_{\substack{0\leq i< m\\0\leq j\leq m-1\\ (i,j)\neq (m,m-1)}}\binom{m}{i}\binom{m-1}{j}\Slow z_{m-i,m-1-j}v^i\overline{v}^j.
\end{equation*}
From \eqref{Lpineq} it suffices to show,
\begin{equation*}
|\langle |v|^{p-2}\overline{v},F_0(v,\vec{z}_N  \rangle| \leq A_t+B_t+C
\end{equation*}
for some constant $C$. To do this it suffices to prove
\begin{equation}\label{whatwewant}
\lvert\langle  \Slow z_{m-i,m-1-j}v^i\overline{v}^j, |v|^{p-2}v \rangle \rvert\leq \delta(A_s+B_s)+C(\delta)
\end{equation}
for some small $\delta>0$, for each $i$ and $j$.

We will just prove \eqref{whatwewant} for the case $(i,j)=(m-1,m-1)$, the other cases are similar, and in fact slightly easier as the homogeneity in $v$ is lower. By Proposition \ref{besovdual}, the boundedness of $\Slow$, 
\begin{equation*}
    \norm{\Slow f}_{C^{-\eps}}\leq \norm{f}_{C^{-\eps}},
\end{equation*}
see \cite{BCD}, and the fact $\norm{z_{1,0}}_{L^\infty([0,\infty);C^{-\eps})}<\infty$ we have,
\begin{align*}
\lvert\langle  \Slow z_{1,0}v^{m-1}\overline{v}^{m-1}, |v|^{p-2}v \rangle \rvert&=|\langle \Slow z_{1,0}, |v|^{p-2}v \overline{v}^{m-1}v^{m-1} \rangle|\\[5pt]
&\lesssim \norm{|v|^{2m-4+p}v}_{B_{1,1}^{\eps}}\norm{\Slow z_{1,0}}_{C^{-\eps}}\\[5pt]
&\lesssim \norm{|v|^{2m-4+p}v}_{B_{1,1}^{\eps}}.
\end{align*}
Applying Proposition \ref{nablaest}, Cauchy-Schwarz and then Jensen's inequality gives,
\begin{align}\label{Eq: Cauchy+Jensen}
\begin{split}
\norm{|v|^{2m-4+p}v}_{B_{1,1}^{\eps}}&\lesssim \norm{v^{2m-3+p}}_{L^1}^{1-\eps}\norm{v^{2m-4+p}\nabla v}_{L^1}^{\eps}+\norm{v^{2m-3+p}}_{L^1}\\[5pt]
&\lesssim \norm{v^{p-2}\lvert \nabla v\rvert^2}_{L^1}^{\eps/2}\norm{v^{p+4m-6}}_{L^1}^{\eps/2}\norm{v^{2m-3+p}}_{L^1}^{1-\eps}+\norm{v^{2m-3+p}}_{L^1}\\[5pt]
&\lesssim A_t^{\eps/2}B_t^{\frac{2m-3+p}{2m-2+p}(1-\eps)  }\norm{v^{p+4m-6}}_{L^1}^{\eps/2}+B_t^{\frac{2m-3+p}{p+2m-2}}.
\end{split}
\end{align}

Note that, as $x\mapsto x^{\frac{p+4m-6}{p+2m-2}}$ is not concave for $m\geq 2$, we cannot use Jensen's inequality to control $\norm{v^{p+4m-6}}_{L^1}$ by a power of $B_t$. To get around this problem we use a trick in \cite{TW}. Using Proposition \ref{PROP: Sobolev} in the form
\begin{equation*}
\norm{f}_{L^q}\lesssim \norm{f}_{L^2}+\norm{\nabla f}_{L^2}
\end{equation*}
which holds for $q<\infty$. With $f=v^{p/2}$ this implies
\begin{equation*}
\norm{v^{pq/2}}_{L^1}^{1/2}\lesssim \norm{v^p}_{L^1}^{q/4}+\norm{v^{p-2}\lvert\nabla v\rvert^2}_{L^1}^{q/4}.
\end{equation*}
 In particular with $q=\frac{2(p+4m-6)}{p}$ we have,
\begin{equation}\label{Eq: Consequence of Sobolev}
\norm{v^{p+4m-6}}_{L^1}^{1/2}\lesssim  \norm{v^p}_{L^1}^{\frac{p+4m-6}{2p}}+\norm{v^{p-2}\lvert\nabla v\rvert^2}_{L^1}^{\frac{p+4m-6}{2p}}\lesssim B_t^{\frac{p+4m-6}{2(p+2m-2)}}+A_t^{\frac{p+4m-6}{2p}}
\end{equation}
where the second inequality follows from Jensen's inequality which is now applicable. Putting \eqref{Eq: Cauchy+Jensen} and \eqref{Eq: Consequence of Sobolev} together gives
\begin{align*}
\lvert\langle  \Slow z_{1,0}v^{m-1}\overline{v}^{m-1}, |v|^{p-2} \rangle \rvert &\lesssim   A_t^{\eps/2+\frac{p+4m-6}{2p}\eps}B_t^{\frac{2m-3+p}{2m-2+p}(1-\eps)}\\
&+A_t^{\eps/2}B_t^{\frac{(2m-3+p)}{2m-2+p}(1-\eps)+\frac{p+4m-6)}{2(p+2m-2)}\eps}+B_t^{\frac{2m-3+p}{p+2m-2}}.
\end{align*}
Choosing  $\eps$ small enough so that
\begin{equation*}
\eps/2+\frac{p+4m-6}{2p}\eps+\frac{2m-3+p}{2m-2+p}(1-\eps)< 1
\end{equation*}
and
\begin{equation*}
\eps/2+\frac{(2m-3+p)}{2m-2+p}(1-\eps)+\frac{p+4m-6}{2(p+2m-2)}\eps< 1
\end{equation*}
we can use Young's inequality to get,
\begin{equation*}
\lvert\langle  \Slow z_{1,0}v^{m-1}\overline{v}^{m-1}, |v|^{p-2}v \rangle \rvert\leq \delta(A_t+B_t)+C(\delta).
\end{equation*}
Here we are choosing a preliminary $\delta'=\frac{\delta}{C}$ to absorb the implicit constants in the preceding inequalities.
This completes the proof.
\end{proof}

\section{Invariant measure and almost sure global well-posedness}
In this section we use an invariant measure argument to prove almost sure global well-posedness. In particular, we are able to prove almost sure global well-posedness for arbitrarily small dissipatation/dispersion ratios going beyond the global well-posedness result using the energy estimate in Section \ref{Sect: GWP}.

\subsection{On the Gibbs measure}

In this subsection we breifly discuss, by referring to results already in the literature, how to make sense of the measure \eqref{EQU: inv meas}. For more details see \cite{OT1, Nelson}. These results will be important in the proof of Theorem \ref{Th: AS GWP}.

Consider the measure $\mu_a$ which is induced under the map
\begin{equation}\label{EQU: Random id}
\omega\in\Omega \mapsto u(x)=u(x;\omega)=\sum\limits_{n\in\Z^2}\frac{g_n(\omega)}{\sqrt{a(1+|n|^2)}}e^{i n\cdot x}.
\end{equation}
This measure is, for any $s<0$, a mean-zero Gaussian measure on $H^{s}(\T^2)$ with covariance operator $Q_s=(a\textnormal{Id}-a\Delta)^{-1+s}$. The measure $\mu_{a}$ is formally given by
\begin{equation*}
d\mu_a  ``="  Z^{-1}_{a}e^{-\frac{a}{2}\int |\nabla u|^2dx-\frac{a}{2}\int|u|^2dx}du .
\end{equation*}

In the following it will be useful to decompose $\mu^a$ in the form 
\begin{equation*}
    \mu_a=\mu_a^N\otimes\mu_a^{N,\perp}
\end{equation*}
where $\mu_a^N$ is the measure induced under the map
\begin{equation*}
\omega\in\Omega \mapsto u_N(x)=u_N(x;\omega)=\sum\limits_{|n|\leq N}\frac{g_n(\omega)}{\sqrt{a(1+|n|^2)}}e^{i n\cdot x}
\end{equation*}
and $\mu_a^{N,\perp}$ is the measure induced under the map
\begin{equation*}
\omega\in\Omega \mapsto u(x)=u(x;\omega)=\sum\limits_{|n|> N}\frac{g_n(\omega)}{\sqrt{a(1+|n|^2)}}e^{i n\cdot x}.
\end{equation*}
Note that if $u(x)$ is given by \eqref{EQU: Random id} with $a = \frac{a_1}{\gamma}$ then for all $t\in \R$, $\textnormal{Law}(\Slow u) = \textnormal{Law}(\Slow \Psi(t))$.

For $u_N$ in the support of $\mu^{N}_a$ we define the Wick order monomial\footnote{Here it makes sense to talk about $:\!|u_N|^{2m}\!:$ as a Wick ordered monomial because of the Gaussian structure in \eqref{EQU: Random id}. See \cite{Nelson} for more information on Wick ordering}  
\begin{equation*}
:\! |\Slow u_N|^{2m}\!:=(-1)^m m! L_m(|\Slow u_N|^2;\sigma_N).
\end{equation*}
Set 
\begin{equation*}
    G^N_{c,2m}(u)=e^{-\frac{c}{2m}\int_{\T^2} :|\Slow u_N|^{2m}:\,dx }.
\end{equation*}
The following convergence result can be proven using standard techniques in the constructive quantum field theory literature. See for example \cite{OT1}.
\begin{proposition}\label{PROP: G bound indep N}
Let $m\geq 2$ be an integer and $c>0$. Then $G^N_{c,2m}\in L^p(\mu^a)$ for any $1\leq p<\infty$, with a bound uniform in $N$. Further, the sequence $\{G^N_{c,2m}\}_{N\in \mathbb{N}}$ converges in $L^p(\mu_a)$ to some, non zero,  $G_{c,2m}\in L^p(\mu_a)$. 
\end{proposition}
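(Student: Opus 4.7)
The plan is to follow the classical Glimm--Jaffe--Nelson strategy for constructing the two-dimensional $\Phi^{2m}$-measure, as developed in \cite{Nelson} and adapted to the complex setting in \cite{OT1}. Write $W_N := \int_{\T^2} :\!|\Slow u_N|^{2m}\!: dx$; under $\mu_a$ (with $a = a_1/\gamma$), the random function $\Slow u_N$ has the same law as $\Slow\Psi(t)$ for any fixed $t$, so the analysis can be carried out using stationary Gaussian covariances. The first step is to show that $\{W_N\}$ is Cauchy in $L^q(\mu_a)$ for every $q \in [1,\infty)$. Since $W_N$ lies in the inhomogeneous complex Wiener chaos of degree $2m$, Proposition \ref{PROP: Weiner chaos} reduces this to an $L^2(\mu_a)$-estimate. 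Expanding the square and applying the Laguerre expectation identity of Proposition \ref{orthoglag} leads to a sum over frequencies of exactly the same type analyzed in the proof of Proposition \ref{PROP:sconv}, and yields $\|W_N - W_M\|_{L^q(\mu_a)} \lesssim_q \min(N,M)^{-\alpha}$ for some $\alpha > 0$.

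The main obstacle, and the technically heaviest step, is that the control of Step~1 is only polynomial, whereas $G^N_{c,2m} = e^{-(c/2m) W_N}$ is an exponential. One must upgrade to the one-sided Nelson tail bound
\begin{equation*}
\mu_a\big(W_N \leq -K\big) \leq C_1 \exp\!\big(-C_2 K^{1/m}\big), \qquad K \geq 1,
\end{equation*}
uniformly in $N$. I would follow the standard route: split $\Slow u_N = u^{\textnormal{lo}} + u^{\textnormal{hi}}$ at an intermediate frequency scale $M = M(K)$; bound $\|u^{\textnormal{hi}}\|_{L^\infty}$ via a subgaussian estimate on the supremum of the Gaussian field; expand the Wick polynomial using Lemma \ref{lagsum} to separate the leading low-frequency contribution from cross terms; and apply hypercontractivity (Proposition \ref{PROP: Weiner chaos}) with a moment of order $q \sim K^{1/m}$ to dispose of the cross terms by Chebyshev. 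Integrating the resulting tail bound yields
\begin{equation*}
\sup_{N \in \N} \int e^{-\lambda W_N}\,d\mu_a < \infty \qquad \text{for every } \lambda > 0,
\end{equation*}
which gives the uniform $L^p(\mu_a)$-bound on $G^N_{c,2m}$ for all $p \in [1,\infty)$.

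With these two ingredients in hand, $L^p(\mu_a)$-convergence of $\{G^N_{c,2m}\}$ follows from the elementary bound $|e^{-a}-e^{-b}| \leq (e^{-a}+e^{-b})|a-b|$, Cauchy--Schwarz, the uniform $L^{2p}(\mu_a)$-bound of Step~2 (applied with $c$ replaced by $2c$), and the $L^{2p}(\mu_a)$-convergence of $W_N$ from Step~1. Finally, $G_{c,2m}$ is non-trivial because $\E_{\mu_a}[W_N] = 0$: applying Proposition \ref{orthoglag} with $k=\ell=0$, $n=m$ and $f=g=\Slow u_N$ shows that $\E_{\mu_a}[:\!|\Slow u_N|^{2m}\!:] = 0$. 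Jensen's inequality then gives $\E_{\mu_a}[G^N_{c,2m}] \geq e^{-(c/2m)\E_{\mu_a}[W_N]} = 1$, and this lower bound passes to the $L^1(\mu_a)$-limit to yield $\E_{\mu_a}[G_{c,2m}] \geq 1 > 0$, so in particular $G_{c,2m}$ is not identically zero.
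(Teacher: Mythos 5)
The paper itself does not give a proof of this proposition; it defers to the constructive QFT literature (Nelson, Oh--Thomann), and your sketch follows exactly that standard route: $L^2(\mu_a)$-convergence of $W_N$ within a fixed Wiener chaos upgraded to $L^q$ by hypercontractivity, a one-sided Nelson tail bound giving uniform exponential integrability, then $L^p$-convergence of the exponentials via $|e^{-a}-e^{-b}|\le (e^{-a}+e^{-b})|a-b|$ and H\"older, and non-triviality via $\E_{\mu_a}[W_N]=0$ (which does follow from Proposition \ref{orthoglag} by pairing $L^{(0)}_m$ against $L^{(0)}_0=1$) and Jensen. Steps 1 and 3 and the non-degeneracy argument are sound.

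The genuine gap is the quantitative form of Step 2. The tail bound you claim, $\mu_a(W_N\le -K)\le C_1\exp(-C_2K^{1/m})$, is \emph{sub}-exponential for $m\ge 2$ and therefore cannot yield $\sup_N\int e^{-\lambda W_N}\,d\mu_a<\infty$: writing the integral as $\int_0^\infty \lambda e^{\lambda K}\mu_a(W_N\le -K)\,dK+O(1)$, the integrand behaves like $e^{\lambda K-C_2K^{1/m}}$, which is not integrable. The exponent $K^{1/m}$ is the scale of the correct intermediate cutoff, not of the final tail: since $:\!|\Slow[M] u|^{2m}\!:\ \ge -C\sigma_M^m\sim -C(\log M)^m$ pointwise, one chooses $\log M\sim K^{1/m}$ so that the low-frequency part is deterministically $\ge -K/2$, and then $\|W_N-W_M\|_{L^2}\lesssim M^{-\alpha}=\exp(-c K^{1/m})$. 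Chebyshev at order $q$ with hypercontractivity gives $\mu_a(|W_N-W_M|>K/2)\le (2/K)^q(q-1)^{mq}e^{-cqK^{1/m}}$, and the point of Nelson's argument is that one may take $q$ \emph{exponentially} large in $K^{1/m}$ (e.g.\ $q=e^{cK^{1/m}/(2m)}$), producing a tail that decays faster than $e^{-AK}$ for every $A>0$. Your choice $q\sim K^{1/m}$ only yields roughly $\exp(-cK^{2/m})$, which is again sub-exponential for $m\ge 3$ and, for $m=2$, exponential with a fixed rate that cannot accommodate arbitrary $\lambda>0$. The architecture of your argument is correct, but as stated both the tail bound and the moment choice are too weak to close Step 2; with the corrected choice of $q$ the integration giving the uniform $L^p(\mu_a)$ bound goes through and the rest of your proof is fine.
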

The above result gives a rigorous construction of the measure $P_{a_1,c_1,\gamma,2m}$: we define
\begin{equation*}
    dP_{a_1,c_1,\gamma,2m}:=Z_{a_1,c_1,\gamma,2m}^{-1}G_{\frac{c_1}{\gamma},2m}d\mu_\frac{a_1}{\gamma}.
\end{equation*}
Here $Z_{a_1,c_1,\gamma,2m}$ is a normalization factor to make $P_{a_1,c_1,\gamma,2m}$ a probability measure,
\begin{equation*}
    Z_{a_1,c_1,\gamma,2m} = \int_{C^{-\eps}}G_{\frac{c_1}{\gamma},2m}d\mu_\frac{a_1}{\gamma}.
\end{equation*}
One can also show that the truncated measures 
\begin{equation}\label{EQU: trunc meas}
    dP^N_{a_1,c_1,\gamma,2m}:= {Z^N_{a_1,c_1,\gamma,2m}}^{-1}G^N_{\frac{c_1}{\gamma},2m}d\mu_\frac{a_1}{\gamma}
\end{equation}
converge weakly, as $N\rightarrow\infty$, to $dP_{a_1,c_1,\gamma,2m}$, see for example \cite{OT1}. Here again, ${Z^N_{a_1,c_1,\gamma,2m}}$ is a normalization factor to make $P_{a_1,c_1,\gamma,2m}$ a probability measure,
\begin{equation*}
    Z^N_{a_1,c_1,\gamma,2m} = \int_{C^{-\eps}}G_{\frac{c_1}{\gamma},2m}^Nd\mu_\frac{a_1}{\gamma}.
\end{equation*}
We note that, as $Z^N_{a_1,c_1,\gamma,2m}$ is positive for every $N$, and $Z_{a_1,c_1,\gamma,2m}\neq 0$, there exists $\delta$ such that
\begin{equation}\label{EQU: Z bounded below}
    Z^N_{a_1,c_1,\gamma,2m} >\delta\quad \textnormal{ for all } N.
\end{equation}

\subsection{On the truncated equation and measure}

We will prove Theorem \ref{Th: AS GWP} using the truncated measure defined in the previous section, \eqref{EQU: trunc meas}, and the following truncated version of \eqref{EQU: renormalised equ for u}:
\begin{align}\label{EQU: SMOOTH renormalised equ for u_N}
\begin{cases}
\dt u_N=(a_1+ia_2)[\Delta-1] u_N -\mathcal{N}(u_N)+\sqrt{2\gamma}\xi  \\
u_N|_{t = 0} = u_0
\end{cases}
\end{align}
where
\begin{equation*}
    \mathcal{N}(u_N) = (c_1+ic_2)(-1)^{m-1}(m-1)!\Slow\left[L^{(1)}_{m-1}(|\Slow u_N|^2;\sigma_N) \Slow u_N\right]
\end{equation*}

In a manner similar to the introduction, we can decompose a solution, $u_N$, to \eqref{EQU: SMOOTH renormalised equ for u_N} in the form $u_N= v_N+\Psi$ where $v_N$ solves:
\begin{align}\label{EQU: SMOOTH trunc v SCGL}
\begin{cases}
\dt v_N=(a_1+ia_2)[\Delta-1] v_N- (c_1+ic_2)\Slow\hspace{-15pt}\sum\limits_{\substack{0\leq i\leq m\\0\leq j\leq m-1}}\hspace{-10pt}\binom{m}{i}\binom{m-1}{j}(\Slow v_N)^i\overline{(\Slow v_N)^j}:\!\Psi_N^{m-i}\overline{\Psi_N^{m-j-1}}\!:  \\[-10pt]
v_N|_{t = 0} = u_0-\Psi(0)
\end{cases}
\end{align}
where recall $\Psi_N = \Slow \Psi$.

We have the following result on the well-posednes of \eqref{EQU: SMOOTH trunc v SCGL}.

\begin{proposition}\label{PROP: SMOOTH vN is GWP}
For $N\in \mathbb{N}$ and $s_0>-\frac{2}{2m-1}$, \eqref{EQU: SMOOTH trunc v SCGL} is pathwise globally well-posed with initial data measured in $C^{s_0}$.
\end{proposition}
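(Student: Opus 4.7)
The plan is to combine local well-posedness from Proposition \ref{solntoSCGL11} with a finite-dimensional a priori bound. For fixed $N$, the truncated stochastic convolution $\Psi_N=\Slow\Psi$ has Fourier support in finitely many modes and is therefore almost surely smooth in $(x,t)$; in particular each Wick monomial $:\!\Psi_N^k\overline{\Psi_N}^\ell\!:$ lies in $\widehat{C}^{-\eps}_T$ a.s.\ for every $T,\eps>0$. Since the outer $\Slow$ in the nonlinearity of \eqref{EQU: SMOOTH trunc v SCGL} commutes with the semigroup and only improves regularity, the contraction-mapping argument of Proposition \ref{solntoSCGL11} applies and produces a local solution $v_N\in C([0,T_*];C^{s_0})\cap C((0,T_*];C^{2\eps})$ on some random interval $[0,T_*]$.

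For global existence I would reduce to a finite-dimensional ODE. Applying the sharp Fourier projector $P_{>N}$ onto $\{|n|>N\}$ to \eqref{EQU: SMOOTH trunc v SCGL} and using $P_{>N}\Slow=0$, the high-frequency part $P_{>N}v_N$ solves the linear equation $\partial_tw=(a_1+ia_2)[\Delta-1]w$ and is globally defined in $C^{s_0}$ by Proposition \ref{PROP: Heat smooth}. Because $\Slow v_N=\Slow\Plow v_N$, the complementary low-frequency part $v_N^L:=\Plow v_N$ satisfies a closed polynomial ODE of degree $2m-1$ on the finite-dimensional space $V_N:=\mathrm{Ran}(\Plow)$, whose a.s.\ continuous coefficients are given by the Wick monomials of $\Psi_N$.

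The main step is an $L^2$ energy estimate for this ODE. Testing against $v_N^L$ in $L^2$, using self-adjointness of $\Plow$ and $\Slow$ to transfer the truncations onto the test function, and isolating the leading nonlinear contribution $|\Slow v_N^L|^{2m-2}\Slow v_N^L$ from the rest gives, under the standing assumption $c_1>0$ of Section 7 (required for the integrability of the Gibbs weight in Proposition \ref{PROP: G bound indep N}),
$$\frac{1}{2}\frac{d}{dt}\|v_N^L\|_{L^2}^2\leq -a_1\|v_N^L\|_{L^2}^2-c_1\|\Slow v_N^L\|_{L^{2m}}^{2m}+C(\omega,N,T)\bigl(1+\|v_N^L\|_{L^2}^{2m-1}\bigr),$$
the subleading terms being bounded using Bernstein's inequality on $V_N$ together with the a.s.\ uniform-in-$t$ bound on the Wick monomials. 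On the finitely many lattice points of $V_N$ the multiplier $\chi_N$ is bounded below, so $\Slow$ and the identity are comparable on $V_N$ (with an $N$-dependent constant), and Young's inequality combined with H\"older's inequality to pass from $L^2$ to $L^{2m}$ absorbs the last term into the dissipative middle term. The resulting bound $\frac{d}{dt}\|v_N^L(t)\|_{L^2}^2\leq C(\omega,N,T)$ excludes finite-time blow-up, and combined with the global linear evolution of $P_{>N}v_N$ extends $v_N$ to any $[0,T]$.

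The main obstacle is the outer smooth cutoff $\Slow$: because it is not a sharp projection, the dissipative term in the $L^2$ energy estimate naturally appears as $\|\Slow v_N^L\|_{L^{2m}}^{2m}$ and not as $\|v_N^L\|_{L^{2m}}^{2m}$, and $\chi_N$ is not uniformly bounded below on all of $\mathrm{Ran}(\Plow)$. Exploiting the finite-dimensional structure of $V_N$, on which all $L^p$ norms and their $\Slow$-variants are equivalent, resolves this issue at the price of $N$-dependent constants that are harmless since we only need $N$-fixed global well-posedness.
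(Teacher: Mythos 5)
Your overall strategy coincides with the paper's (the paper only sketches it): decouple \eqref{EQU: SMOOTH trunc v SCGL} into a high-frequency part, which is linear and hence globally defined, and a finite-dimensional low-frequency block, which is handled by an $L^2$ energy estimate in the spirit of Proposition \ref{truncLpbound}; the paper additionally invokes the Burkholder--Davis--Gundy inequality, which your pathwise formulation avoids since for fixed $N$ the Wick monomials of $\Psi_N$ are a.s.\ continuous (indeed smooth) coefficients rather than a stochastic integral.

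There is, however, one step that fails as written. You assert that ``on the finitely many lattice points of $V_N$ the multiplier $\chi_N$ is bounded below, so $\Slow$ and the identity are comparable on $V_N$.'' This is false: $\chi$ is smooth and vanishes outside $B_1$, hence vanishes on $\partial B_1$, so $\chi_N(n)=\chi(n/N)=0$ for lattice points with $|n|=N$ (e.g.\ $n=(N,0)$), and $\chi_N(n)$ can be arbitrarily small for $|n|$ just below $N$. Thus $\Slow$ has a nontrivial kernel inside $\mathrm{Ran}(\Plow)$ and $\|\Slow\,\cdot\,\|_{L^{2m}}$ is \emph{not} equivalent to $\|\cdot\|_{L^{2m}}$ on $V_N$; your closing paragraph concedes exactly this and then appeals to the equivalence anyway, so the absorption of $\|v_N^L\|_{L^2}^{2m-1}$ into $-c_1\|\Slow v_N^L\|_{L^{2m}}^{2m}$ does not go through. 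Two easy repairs: (a) take the nonlinear block to be the range of the projector onto $\{n:\chi_N(n)\neq 0\}$, on which $\chi_N$ genuinely is bounded below (the modes with $\chi_N(n)=0$ evolve linearly, since the outer $\Slow$ annihilates them, and can be lumped with the high frequencies); or, more simply, (b) never compare $\|v_N^L\|_{L^2}$ with $\|\Slow v_N^L\|_{L^{2m}}$ at all: move the outer $\Slow$ onto the test function so that each subleading term is bounded by
\begin{equation*}
\big|\langle z_{i,j}(\Slow v_N)^i\overline{(\Slow v_N)}^j,\Slow v_N\rangle\big|\lesssim_{\omega,N,T}\norm{\Slow v_N}_{L^{i+j+1}}^{i+j+1}\lesssim \norm{\Slow v_N}_{L^{2m}}^{i+j+1},
\end{equation*}
with $i+j+1\leq 2m-1<2m$, and absorb it by Young's inequality directly into the defocusing term $-c_1\norm{\Slow v_N}_{L^{2m}}^{2m}$. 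Either way one obtains $\frac{d}{dt}\norm{v_N^L(t)}_{L^2}^2\leq C(\omega,N,T)$, and the rest of your argument is correct.
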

\begin{proof}
Here we just sketch to proof. The equation \eqref{EQU: SMOOTH trunc v SCGL} can be decoupled into a low frequency equation and a high frequency equation. Global well-posedness of the high frequency part of the equation is immediate as it is linear. Local well-posedness of the low frequency part of the equation follows from the Cauchy-Lipschitz theorem for stochastic processes. This can be extended to global well-posedness by proving a basic $L^2$ energy estimate, similar to that in Proposition \ref{truncLpbound} and then the Burkholder-Davis-Gundy inequality. For more details about this procedure, see \cite[Proposition 5.1]{ORT}.
\end{proof}

Using the local well-posedness theory developed in Section 5 we can prove the following. 

\begin{proposition}\label{PROP: vN approximates v}
Let $u_0\in C^{s_0}$ and let $T=T(\omega)$ be the maximal time of existence of \eqref{EQU: for v SCGL}. Then the solution, $v_N$, to \eqref{EQU: SMOOTH trunc v SCGL} converges to $v$, the solution, to \eqref{EQU: for v SCGL} in $C([0,T]; C^{s_0})$ as $N\rightarrow\infty$ for any $T<T(\omega)$. 
\end{proposition}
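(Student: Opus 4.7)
The plan is to combine the almost sure convergence of the truncated stochastic tuples given by Proposition \ref{PROP:sconv} with a direct Gr\"onwall analysis of the difference $w_N := v - v_N$, iterated across $[0,T]$. The $L^p(\Omega)$-convergence in Proposition \ref{PROP:sconv} upgrades to almost sure convergence along a subsequence via Borel--Cantelli, and since the statement we are proving is deterministic in $\omega$, this suffices. Fix such $\omega$, set $\vec{z}_N := \{:\!\Psi_N^{m-i}\overline{\Psi_N}^{m-j-1}\!:\}_{i,j}$ and $\vec{z} := \{:\!\Psi^{m-i}\overline{\Psi}^{m-j-1}\!:\}_{i,j}$, so that $\vec{z}_N \to \vec{z}$ in $\widehat{C}^{-\eps}_{T'}$ for every $T'>0$. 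Fix $T<T(\omega)$; by continuity and compactness $M := \sup_{t\in[0,T]}\|v(t)\|_{C^{s_0}}<\infty$, and by the instantaneous smoothing built into Proposition \ref{solntoSCGL11}, $v \in C((0,T];C^{2\eps})$.

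Since both $v$ and $v_N$ share the initial datum $u_0-\Psi(0)$, we have $w_N(0)=0$. Subtracting the mild formulations of \eqref{EQU: for v SCGL} and \eqref{EQU: SMOOTH trunc v SCGL}, the nonlinear discrepancy decomposes as
\begin{equation*}
F(v,\vec{z}) - F_N(v_N,\vec{z}_N) = \big[F(v,\vec{z}) - F(v_N,\vec{z})\big] + \big[F(v_N,\vec{z}) - F(v_N,\vec{z}_N)\big] + \big[F(v_N,\vec{z}_N) - F_N(v_N,\vec{z}_N)\big],
\end{equation*}
where $F_N$ denotes the nonlinearity of \eqref{EQU: SMOOTH trunc v SCGL} with its inner and outer $\Slow$ projections. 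The first bracket is Lipschitz in $v$ by the same difference estimate used in Proposition \ref{solntoSCGL11}, producing a term linear in $w_N$ to be absorbed via Gr\"onwall or smallness of the time step. The second bracket is majorized by $\|\vec{z}-\vec{z}_N\|_{\widehat{C}^{-\eps}_T}$ times a polynomial in $\|v_N\|_{C^{2\eps}}$ via Proposition \ref{applicparaprod}, hence tends to zero. For the third bracket write
\begin{equation*}
F(v_N,\vec{z}_N) - F_N(v_N,\vec{z}_N) = (\mathrm{Id}-\Slow)F(v_N,\vec{z}_N) + \Slow\big[F(v_N,\vec{z}_N) - F(\Slow v_N,\vec{z}_N)\big],
\end{equation*}
and use the telescoping identity \eqref{EQU: poly dif} together with Proposition \ref{applicparaprod} to reduce matters to showing $\Slow g \to g$ in $C^{-\eps}$ (or equivalently, after one application of $S(t-s)$, in $C^{s_0}$) for arguments $g$ of regularity strictly greater than $-\eps$. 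The requisite regularity gain is furnished by the $C^{2\eps}$-smoothness of $v_N$ on positive times.

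Finally, I would iterate: the contraction argument of Proposition \ref{solntoSCGL11}, adapted to accommodate the extra $\Slow$ projections (which are uniformly bounded on every $C^s$), runs on intervals of length $\tau=\tau(M)>0$ depending only on $M$, and shows $v_N \in X^{s_0,2\eps}_{[k\tau,(k+1)\tau]}$ uniformly in $N$ large. Combined with the decomposition above this yields
\begin{equation*}
\|w_N\|_{X^{s_0,2\eps}_{[k\tau,(k+1)\tau]}} \le C(M)\,\|w_N(k\tau)\|_{C^{s_0}} + \varepsilon_N^{(k)},
\end{equation*}
with $\varepsilon_N^{(k)}\to 0$ as $N\to\infty$. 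Since $[0,T]$ is covered by finitely many intervals of length $\tau$, induction on $k$ propagates $w_N\to 0$ in $C([0,T];C^{s_0})$. The main technical obstacle is precisely the quantitative control of the $\Slow$-projection error in the third bracket: one must verify that $\Slow\to\mathrm{Id}$ in the appropriate Besov topology with a rate, exploiting the extra regularity enjoyed by $v_N$ on positive times; once this is in place the rest is a standard Gr\"onwall-iteration scheme.
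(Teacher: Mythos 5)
Your argument is correct and is essentially the one the paper intends: the paper states this proposition without proof, remarking only that it follows from the local well-posedness theory of Section 5, i.e.\ the continuous dependence on $(v_0,\vec{z})$ in Propositions \ref{solntoSCGL11} and \ref{smoother ID} combined with the almost sure convergence of $\{:\!\Psi_N^{k}\overline{\Psi_N}^{\l}\!:\}$ from Proposition \ref{PROP:sconv}, iterated over subintervals of uniform length exactly as you describe. Your extra care with the additional $\Slow$ projections in \eqref{EQU: SMOOTH trunc v SCGL} (your third bracket) is a genuine point the paper glosses over and is needed because the continuous-dependence statement does not apply verbatim; the one imprecision is that $F(v_N,\vec{z}_N)$ has regularity only $C^{-\eps}$ (a product of $C^{-\eps}$ and $C^{2\eps}$ factors gains nothing), so rather than invoking ``regularity strictly greater than $-\eps$'' you should estimate $(\mathrm{Id}-\Slow)F(v_N,\vec{z}_N)$ in $C^{-\eps-\delta}$ with rate $N^{-\delta}$ and recover the loss through the smoothing of $S(t-t')$ in the Duhamel integral, which still yields an integrable time singularity for $\delta$ small.
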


Proposition \ref{PROP: SMOOTH vN is GWP} allows us to define the solution map associated to equation \eqref{EQU: SMOOTH renormalised equ for u_N}, $$\Phi_N(\cdot,\cdot,\cdot):\R\times C^{-\eps} \times\Omega\rightarrow C^{-\eps}.$$ That is $\Phi_N(t,f,\omega)$ is the solution to \eqref{EQU: SMOOTH renormalised equ for u_N} at time $t$ starting from initial data $f\in C^{-\eps}$. We then define $P_t^N:C_b(C^{-\eps})\rightarrow B_b(C^{-\eps})$, the transition semi-group associated to \eqref{EQU: SMOOTH renormalised equ for u_N}, as follows:
\begin{equation*}
    P_t^N\psi(f)=\E[ \psi(\Phi_N(t,f,\omega)) ].
\end{equation*}
Here $B_b(C^{-\eps})$ is the set of all Borel-bounded functions from $C^{-\eps}$ to $\C$ and $C_b(C^{-\eps})$ is the set of all continuous Borel-bounded functions from $C^{-\eps}$ to $\C$. 

We say a measure $\nu$ on $C^{-\eps}$ is invariant under $P_t^N$ if 
\begin{equation*}
    \int_{C^{-\eps}} P_t^N\psi\,d\nu=\int_{C^{-\eps}}\psi\,d\nu
\end{equation*}
for all $\psi\in C_b(C^{-\eps})$ and for all $t\geq 0$. See \cite{DaPrato} for equivalent characterizations of what it means for a measure to be invariant.

\begin{proposition}   
Assume $\frac{a_1}{a_2}=\frac{c_1}{c_2}$. Then the probability measure $dP^N_{a_1,c_1,\gamma,2m}$ is an invariant measure for $P^N_t$.
\end{proposition}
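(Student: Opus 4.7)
The plan is to decompose \eqref{EQU: SMOOTH renormalised equ for u_N} into independent low- and high-frequency pieces and verify invariance of the two corresponding factors of $P^N_{a_1,c_1,\gamma,2m}$. Let $\Plow$ be the sharp projector of Remark~\ref{REM: Smooth truncation sto conv} and write $u_N = u_N^L + u_N^H$ with $u_N^L := \Plow u_N \in E_N := \{f : \widehat{f}(n)=0 \textnormal{ for } |n|>N\}$. Because $\mathcal{N}(u_N)$ lies in $E_N$ and depends on $u_N$ only through $\Slow u_N = \Slow u_N^L$, the equation decouples: $u_N^H$ satisfies a linear complex Ornstein--Uhlenbeck equation driven by $\sqrt{2\gamma}(I-\Plow)\xi$, while $u_N^L$ satisfies an $E_N$-valued SDE driven by the independent noise $\sqrt{2\gamma}\Plow\xi$. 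Since $G^N_{c_1/\gamma,2m}$ depends only on low frequencies, the measure factorizes as $P^N_{a_1,c_1,\gamma,2m} = P^{N,L}\otimes\mu^{N,\perp}_{a_1/\gamma}$, so it suffices to verify invariance of each marginal separately. The high-frequency case is standard: each mode $|n|>N$ is an independent complex Ornstein--Uhlenbeck process whose unique invariant law is the centered complex Gaussian of variance $\gamma/[a_1(|n|^2+1)]$, matching $\mu^{N,\perp}_{a_1/\gamma}$ exactly.

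The substantive step is the finite-dimensional low-frequency SDE. I introduce the truncated Hamiltonian
\begin{equation*}
H^N(\phi) := \tfrac{a_1}{2}\int_{\T^2}\bigl(|\nabla\phi|^2 + |\phi|^2\bigr)\,dx + \tfrac{c_1}{2m}\int_{\T^2}:\!|\Slow\phi|^{2m}\!:\,dx,\qquad \phi\in E_N,
\end{equation*}
so that $P^{N,L}$ has density $Z^{-1}e^{-H^N/\gamma}$ with respect to Lebesgue measure on $E_N$. A variational calculation using the identity $\tfrac{d}{dx}L_m(x;\sigma) = -L^{(1)}_{m-1}(x;\sigma)$ from \eqref{lag derivative}, the chain rule, and the self-adjointness of $\Slow$ yields, in complex notation,
\begin{equation*}
\nabla H^N(\phi) = a_1(1-\Delta)\phi + c_1(-1)^{m-1}(m-1)!\,\Slow\!\left[L^{(1)}_{m-1}(|\Slow\phi|^2;\sigma_N)\,\Slow\phi\right] =: -G(\phi).
\end{equation*}
The hypothesis $a_1/a_2 = c_1/c_2$ lets us set $\beta := a_2/a_1 = c_2/c_1$ and factor the full drift as $(a_1+ia_2)[\Delta-1]\phi - \mathcal{N}(\phi) = (1+i\beta)G(\phi) = -\nabla H^N(\phi) - \beta J\nabla H^N(\phi)$, where $J$ is the skew-symmetric real matrix implementing multiplication by $i$ under the identification $E_N \simeq \R^{2\dim_\C E_N}$.

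Finally, let $L$ denote the generator of the low-frequency SDE on $E_N$; since the noise has covariance $2\gamma\,I_{E_N}$, one has $L\psi = (-\nabla H^N - \beta J\nabla H^N)\cdot\nabla\psi + \gamma\,\Delta_{\textnormal{Eucl}}\psi$. Setting $\rho := Z^{-1}e^{-H^N/\gamma}$, a direct Fokker--Planck computation gives
\begin{equation*}
L^*\rho = \bigl[\Delta_{\textnormal{Eucl}}H^N - \tfrac{|\nabla H^N|^2}{\gamma}\bigr]\rho \;-\; \beta\,[\nabla\cdot(J\nabla H^N)]\,\rho \;+\; \tfrac{\beta}{\gamma}(J\nabla H^N\cdot\nabla H^N)\,\rho \;+\; \gamma\,\Delta_{\textnormal{Eucl}}\rho \;=\; 0,
\end{equation*}
since $\gamma\,\Delta_{\textnormal{Eucl}}\rho = [-\Delta_{\textnormal{Eucl}}H^N + |\nabla H^N|^2/\gamma]\rho$ cancels the first parenthesis (the classical Langevin detailed balance), and the two $\beta$-terms vanish separately by $\nabla\cdot(J\nabla H^N)=0$ (equality of mixed partials) and $J\nabla H^N\cdot\nabla H^N = 0$ (skew-symmetry of $J$). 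Hence $P^{N,L}$ is invariant for the low-frequency dynamics, and combined with the high-frequency invariance this proves the proposition. The main technical obstacle is the gradient identity $\nabla H^N = -G$: the outer $\Slow$ appearing in $\mathcal{N}$ must arise precisely as the adjoint of the inner $\Slow$ sitting inside the Wick-ordered monomial, and verifying that these smooth cutoffs interlock correctly via the chain rule, together with \eqref{lag derivative}, is the key computation underpinning the whole argument.
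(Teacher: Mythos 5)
Your proposal is correct and follows essentially the same route as the paper: the same low/high frequency decoupling with the high modes handled as stationary complex Ornstein--Uhlenbeck processes, and the same identification of the low-frequency drift as $-(1+\beta J)\nabla H^N$ with invariance following from Langevin detailed balance plus the two cancellations $\nabla\cdot(J\nabla H^N)=0$ and $J\nabla H^N\cdot\nabla H^N=0$. The only cosmetic difference is that you verify $L^*\rho=0$ on the density while the paper shows $\int L_N f\,e^{-I/\gamma}\,dy=0$ against test functions; these are dual formulations of the identical computation.
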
  
\begin{proof}
In this proof we heavily use ideas from \cite[Proposition 4]{DDF}. We present details here for completeness.

Equation \eqref{EQU: SMOOTH trunc v SCGL} decouples into a finite dimensional system of SDEs, corresponding to frequencies $|n|\leq N$,
\begin{align}\label{EQU: low freq SMOOTH renormalised equ for u_N}
\begin{cases}
d u_N=(a_1+ia_2)[\Delta-1] u_Ndt -\mathcal{N}(u_N)dt+\sqrt{2\gamma}\Plow dW(t)  \\
u_N|_{t = 0} = \Plow u_0
\end{cases}
\end{align}
and an infinite dimensional system of linear SDEs corresponding to the frequencies $|n|> N$.
\begin{align}\label{EQU: high freq SMOOTH renormalised equ for u_N}
\begin{cases}
d u_N=(a_1+ia_2)[\Delta-1] u_N\,dt+\sqrt{2\gamma}P_{> N}dW(t)  \\
u_N|_{t = 0} = P_{> N} u_0.
\end{cases}
\end{align}
Hence it suffices to show that the low frequency component of $dP^N_{a_1,c_1,\gamma,2m}$,  ${Z^N_{a_1,c_1,\gamma,2m}}^{-1}G^N_{\frac{c_1}{\gamma},2m}d\mu_{\frac{a_1}{\gamma}}^N$ is an invariant measure for \eqref{EQU: low freq SMOOTH renormalised equ for u_N} and the high frequency component of $dP^N_{a_1,c_1,\gamma,2m}$, $d\mu_{\frac{a_1}{\gamma}}^{N,\perp}$ is an invariant measure for \eqref{EQU: high freq SMOOTH renormalised equ for u_N}. For the high frequency component, we simply note that the solution of \eqref{EQU: high freq SMOOTH renormalised equ for u_N} is  $P_{>N} \Psi(t)$. By looking at the random Fourier series that defines this process, it is easy to check that $P_{>N} \Psi(t)$ is a stationary process with law $d\mu_{\frac{a_1}{\gamma}}^{N,\perp}$. 

For the low frequency measure, we use a method in \cite{DDF}. We work in the finite dimensional space $E_N = \mbox{span}\{e^{i n \cdot x}\} $. We view an element of $E_N$ as a vector composed of it's real and imaginary parts, $y=y_1+iy_2=(y_1,y_2)$ and write \eqref{EQU: low freq SMOOTH renormalised equ for u_N} in the form
\begin{equation}\label{EQU: low freq real imag}
    d(\Re u_N, \Im u_N)= -\frac{a_2}{a_1}JDI(\Re u_N, \Im u_N)-DI(\Re u_N, \Im u_N)+\sqrt{2\gamma}\Plow d(\Re W, \Im W).
\end{equation}
Here, $J$ is an analogue of multiplication by $i$,
\begin{equation*}
    J(y) = J(y_1+iy_2)=J(y_1,y_2) = (-y_2,y_1)=-y_2+iy_1=iy
\end{equation*}
and for $y=y_1+iy_2\in E_N$, 
\begin{align*}
    I(y)=I(y_1+iy_2)=I(y_1,y_2)&=\frac{a_1}{2}\int_{\T^2}\big(|\nabla y_1|^2+|\nabla y_2|^2+|y_1|^2+|y_2|^2\big)\\
    &\hspace{10pt}+\frac{c_1}{2m}\int_{\T^2}(-1)^{m-1}(m-1)!L_{m-1}^{(1)}(|\Slow y_1|^2+|\Slow y_2|^2; \sigma_N).
\end{align*}
Here we are using the notation that if $K:E_N\rightarrow \R$, then $DK(x)$ is the Fr\'echet derivative of $K$ at $x\in E_N$ which we identify with an element of $E_N$. 

The generator of \eqref{EQU: low freq real imag} (see \cite{DPD} for more information of generators of finite dimensional SDEs), $L_N$ is
\begin{align*}
    L_Nf(y) &= \gamma \textnormal{Tr}D^2f_1(y)+\gamma \textnormal{Tr}D^2f_2(y)-\frac{a_2}{a_1}\langle Df(y),JDI(y)\rangle_{E_N}-\langle Df(y),DI(y)\rangle_{E_N}
\end{align*}
where $f=(f_1,f_2)$ and $f_1,f_2: E_N\rightarrow \R$. Here for a twice differentiable function $g:E_N\rightarrow \R$,
\begin{equation*}
   \textnormal{Tr}D^2g(y) =\sum\limits_{|n|\leq N} \langle D^2g(y) e^{in\cdot x},e^{in\cdot x} \rangle_{E_N}. 
\end{equation*}
To show that $G^N_{\frac{c_1}{\gamma},2m}d\mu_{\frac{a_1}{\gamma}}^N = e^{-\frac{1}{\gamma}I(y)}dy$ is an invariant measure for \eqref{EQU: low freq real imag} we need to show that, for all twice differentiable $f_1,f_2:E_N\rightarrow \R$, $f=(f_1,f_2)$,
\begin{equation*}
    \int_{E_N}L_Nf(y)e^{-\frac{1}{\gamma}I(y)}dy=0.
\end{equation*}
We do this by integration by parts. We have
\begin{align*}
   \int_{E_N}L_Nf(y)e^{-\frac{1}{\gamma}I(y)}dy &= \gamma\int_{E_N} \big(\textnormal{Tr}D^2f_1(y)+ \textnormal{Tr}D^2f_2(y)\big)e^{-\frac{1}{\gamma}I(y)}dy\\
   &\hspace{10pt}-\frac{a_2}{a_1}\int_{E_N}\langle Df(y),JDI(y)\rangle_{E_N}e^{-\frac{1}{\gamma}I(y)}dy\\
   &\hspace{10pt}-\int_{E_N}\langle Df(y),DI(y)\rangle_{E_N}e^{-\frac{1}{\gamma}I(y)}dy\\
   &= (\textnormal{I})+(\textnormal{II})+(\textnormal{III}).
\end{align*}
Integrating by parts we have
\begin{align*}
    (\textnormal{II}) = -\gamma\frac{a_2}{a_1}\int_{E_N}\textnormal{Tr}(DJD)e^{-\frac{1}{\gamma}I(y)}dy = 0
\end{align*}
as $\textnormal{Tr}(DJD)$=0.

For $(\textnormal{III})$ we have,
\begin{align*}
    (\textnormal{III}) &= \gamma\int_{E_N}\langle Df(y),D(e^{-\frac{1}{\gamma}I(y)})\rangle_{E_N}dy\\
    & =  \gamma\int_{E_N}(\textnormal{Tr}D^2f_1(y)+ \textnormal{Tr}D^2f_2(y)\big)e^{-\frac{1}{\gamma}I(y)}dy\\
    &=-(\textnormal{I}). 
\end{align*}
This completes the proof.
\end{proof}

\subsection{Almost sure global well-posedness}

Before we get to the main estimate in this section, we first state some preliminary probabilistic estimates.

\begin{proposition}\label{PROP: Same laws}
Let $a_1,\gamma>0$ and $N\in \mathbb{N}$. Then,
\begin{equation*}
    \int_{C^{-\eps}} \norm{u_0}_{C^{-\eps}}^pd\mu_\frac{a_1}{\gamma}(u_0)<C<\infty
\end{equation*}
and
\begin{align*}
    \int_{C^{-\eps}}\norm{L^{(1)}_{m-1}(|\Slow u_0|^2;\sigma_N) \Slow u_0}_{C^{-\eps}}^pd\mu_{\frac{a_1}{\gamma}}(u_0) < C<\infty
\end{align*}
for some constant independent of $N$.
\end{proposition}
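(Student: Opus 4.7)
The plan is to reduce both bounds to the regularity of the stochastic convolution and its Wick-ordered powers already established in Proposition~\ref{PROP:sconv}, by way of a simple law-matching observation. The Fourier coefficients of $u_0$ drawn from $\mu_{a_1/\gamma}$ are, by definition of the measure, independent mean-zero complex Gaussians with variance $\gamma/(a_1(1+|n|^2))$; comparing with the It\^o-isometry computation \eqref{EQU: Simple Expect xy} at equal times shows these exactly match the variances of the Fourier coefficients of $\Psi(t)$ for every fixed $t$. Consequently $u_0 \stackrel{d}{=} \Psi(t)$ as random elements of $C^{-\eps}$, and the same argument (with $\chi_N(n)$ inserted) gives $\Slow u_0 \stackrel{d}{=} \Psi_N(t)$. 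This identification is precisely the content suggested by the name of the proposition, and it is the only genuinely new step in the argument.

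For the first estimate this immediately yields
\begin{equation*}
\int_{C^{-\eps}}\norm{u_0}_{C^{-\eps}}^p\,d\mu_{a_1/\gamma}(u_0) = \E\norm{\Psi(t)}_{C^{-\eps}}^p,
\end{equation*}
whose right-hand side is finite by Proposition~\ref{PROP:sconv} applied with $k=1$, $\ell=0$ (equivalently by the linear case of \eqref{EQU: Sobolev + Weiner Chaos}).

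For the second estimate the same identification gives
\begin{equation*}
\int_{C^{-\eps}}\norm{L^{(1)}_{m-1}(|\Slow u_0|^2;\sigma_N)\Slow u_0}_{C^{-\eps}}^p\,d\mu_{a_1/\gamma}(u_0) = \E\norm{L^{(1)}_{m-1}(|\Psi_N(t)|^2;\sigma_N)\Psi_N(t)}_{C^{-\eps}}^p,
\end{equation*}
and I would estimate the right-hand side by repeating, essentially verbatim, the Fourier-analytic proof of Proposition~\ref{PROP:sconv}. Concretely, I will apply Proposition~\ref{orthoglag} with $k=m-1$ and $\ell=1$ to obtain the pointwise covariance
\begin{align*}
&\E\bigl[L^{(1)}_{m-1}(|\Psi_N(x,t)|^2;\sigma_N)\Psi_N(x,t)\,\overline{L^{(1)}_{m-1}(|\Psi_N(y,t)|^2;\sigma_N)\Psi_N(y,t)}\bigr] \\
&\qquad = m\,\bigl|\E[\Psi_N(x,t)\overline{\Psi_N(y,t)}]\bigr|^{2(m-1)}\,\E[\Psi_N(x,t)\overline{\Psi_N(y,t)}],
\end{align*}
expand each factor using \eqref{EQU: Simple Expect xy}, apply the Bessel potentials $\langle\nabla_x\rangle^{-\eps}\langle\nabla_y\rangle^{-\eps}$, and set $x=y$. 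The result is a $(2m-1)$-fold frequency sum of the same structure as that produced in \eqref{EQU: app of ortholag}, so the inductive summation argument of \eqref{EQU: Est of sum} bounds it uniformly in $N$. Since $L^{(1)}_{m-1}(|\Psi_N|^2;\sigma_N)\Psi_N$ is a polynomial of degree $2m-1$ in the underlying complex Gaussians, Sobolev embedding combined with the Wiener-chaos estimate (Proposition~\ref{PROP: Weiner chaos}) applied as in \eqref{EQU: Sobolev + Weiner Chaos}, together with Proposition~\ref{holdersobolevloddif}, upgrades this second-moment bound to the desired $L^p(\Omega;C^{-\eps})$ control uniformly in $N$.

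There is no serious analytic obstacle here: once the law-matching $\Slow u_0 \stackrel{d}{=} \Psi_N(t)$ is in place, every subsequent step is a transcription of a corresponding step in the proof of Proposition~\ref{PROP:sconv}, with the role of the $k+\ell$ frequency factors there played by the $2m-1$ factors arising from Proposition~\ref{orthoglag} with $k=m-1$, $\ell=1$.
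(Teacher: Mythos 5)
Your proposal is correct and follows essentially the same route as the paper, which proves this proposition in one line by combining the law identification $\textnormal{Law}(\Psi(t))=\mu_{a_1/\gamma}$ with Proposition~\ref{PROP:sconv}. The only difference is presentational: rather than re-running the Fourier-analytic argument with Proposition~\ref{orthoglag}, you could simply observe that $L^{(1)}_{m-1}(|\Slow u_0|^2;\sigma_N)\Slow u_0$ is, up to the constant $(-1)^{m-1}(m-1)!$, exactly the Wick monomial $:\!\Psi_N^{m}\overline{\Psi_N}^{m-1}\!:$ from \eqref{Lag2}, whose uniform-in-$N$ $L^p(\Omega;C^{-\eps})$ bound is already part of Proposition~\ref{PROP:sconv}.
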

\begin{proof}
This result is a consequence of Proposition \ref{PROP:sconv} and the fact that the law of $\Psi(t)$ is $\mu_\frac{a_1}{\gamma}$.
\end{proof}

To prove Theorem \ref{Th: AS GWP} we use an invariant measure argument as in \cite{DPD}. The problem however is, that in order for to prove invariance of \eqref{EQU: inv meas}, we need \eqref{EQU: renormalised equ for u} to have a well-defined flow but in order to prove \eqref{EQU: renormalised equ for u} has a well defined flow we need to use the invariance of the measure \eqref{EQU: inv meas}. To enter this loop we use the truncated equation \eqref{EQU: SMOOTH renormalised equ for u_N} which we know has a globally defined flow and invariant measure \eqref{EQU: trunc meas}. We first prove the following.

\begin{proposition}\label{PROP: truncated a priori prob bound}
Suppose $\frac{a_1}{a_2} = \frac{c_1}{c_2}$. Then for $T>0$ there exists a constant $C_T>0$ such that for all $N\in \mathbb{N}$,
\begin{equation*}
    \int_{C^{-\eps}}\E[ \sup\limits_{t\in[0,T]} \norm{\Phi_N(t,u_0,\omega)}_{C^{-\eps}}]\,dP_{a_1,c_1,\gamma,2m}^N\leq C_T.
\end{equation*}
\end{proposition}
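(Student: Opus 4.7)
The plan is to run the invariant measure iteration argument of Bourgain and Da Prato--Debussche, exploiting the decomposition $u_N = v_N + \Psi$ to isolate a piece $v_N$ whose short-time behavior is controlled by the local theory of Section \ref{SEC:3}. By the triangle inequality,
\begin{equation*}
\sup_{t \in [0,T]} \norm{\Phi_N(t, u_0, \omega)}_{C^{-\eps}} \leq \sup_{t \in [0,T]} \norm{v_N(t)}_{C^{-\eps}} + \sup_{t \in [0,T]} \norm{\Psi(t)}_{C^{-\eps}},
\end{equation*}
and the Kolmogorov continuity estimate inside the proof of Proposition \ref{PROP:sconv} yields $\E[\sup_t \norm{\Psi(t)}_{C^{-\eps}}] \leq C_T$; as this piece is independent of $u_0$ it integrates trivially against $dP^N_{a_1,c_1,\gamma,2m}$. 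The argument thereby reduces to bounding $\int \E[\sup_t \norm{v_N(t)}_{C^{-\eps}}] \,dP^N_{a_1,c_1,\gamma,2m}$ uniformly in $N$.

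To do so, fix a large threshold $R > 0$ and let $\tau = \tau(R) \sim R^{-\theta}$ be the local existence time provided by the theory of Section \ref{SEC:3} applied to \eqref{EQU: SMOOTH trunc v SCGL}; this local theory carries over to the truncated equation uniformly in $N$ since $\Slow$ is bounded on $C^{-\eps}$. Partition $[0,T]$ at $t_j = j\tau$, $0 \leq j \leq K := \lceil T/\tau\rceil$, and define the good event $\Omega_R$ on which $\norm{u_N(t_j)}_{C^{-\eps}} + \norm{\Psi(t_j)}_{C^{-\eps}} \leq R$ for every $j$ and each Wick-ordered object $:\!\Psi_N^k\overline{\Psi_N}^\ell\!:$ satisfies $\norm{:\!\Psi_N^k\overline{\Psi_N}^\ell\!:}_{C([0,T];C^{-\eps})} \leq R$. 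On $\Omega_R$, iterating the local bound yields $\sup_t \norm{v_N(t)}_{C^{-\eps}} \leq CR$ for a constant $C$ independent of $R$ and $N$.

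The probability of $\Omega_R^c$ is controlled by invariance: since the law of $u_N(t_j)$ under $dP^N_{a_1,c_1,\gamma,2m} \otimes d\P$ equals $dP^N_{a_1,c_1,\gamma,2m}$ for every $j$, a union bound gives
\begin{equation*}
(P^N_{a_1,c_1,\gamma,2m} \otimes \P)(\Omega_R^c) \lesssim K\bigg[P^N_{a_1,c_1,\gamma,2m}(\norm{u_0}_{C^{-\eps}} > R) + \sum_{k,\ell}\P\big(\norm{:\!\Psi_N^k\overline{\Psi_N}^\ell\!:}_{C([0,T];C^{-\eps})} > R\big)\bigg].
\end{equation*}
The first tail decays faster than any polynomial because $dP^N_{a_1,c_1,\gamma,2m}$ is absolutely continuous with respect to $\mu_{a_1/\gamma}$ with density bounded in every $L^p(\mu_{a_1/\gamma})$ uniformly in $N$ by Proposition \ref{PROP: G bound indep N} and the lower bound \eqref{EQU: Z bounded below}, and $\mu_{a_1/\gamma}$ has Gaussian tails on $C^{-\eps}$; the remaining tails likewise decay super-polynomially via Proposition \ref{PROP:sconv} combined with Markov's inequality. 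Since $K \sim R^\theta$ grows only polynomially in $R$, the representation $\E[\sup_t \norm{v_N}_{C^{-\eps}}] = \int_0^\infty \P(\sup_t \norm{v_N}_{C^{-\eps}} > R)\,dR$ yields the desired bound uniform in $N$.

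The main obstacle is the interaction between the data-dependent local existence time $\tau \sim R^{-\theta}$ and the invariance-based iteration: once $R$ is fixed the partition $\{t_j\}$ is deterministic, which is precisely what is needed to invoke invariance at each $t_j$, and the polynomial growth of $K$ in $R$ is then defeated by the super-polynomial decay of the tail probabilities. A secondary point is verifying that the local theory of Section \ref{SEC:3}, stated for the untruncated equation \eqref{EQU: for v SCGL}, produces constants uniform in $N$ for \eqref{EQU: SMOOTH trunc v SCGL}; this is immediate from the boundedness of $\Slow$ on the relevant Besov--H\"older spaces.
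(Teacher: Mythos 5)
Your proposal is correct in outline, but it follows a genuinely different route from the paper. The paper's proof is direct: it writes $u_N$ via the Duhamel formulation, bounds $\sup_{t\in[0,T]}\norm{u_N(t)}_{C^{-\eps}}$ by $\norm{u_0}_{C^{-\eps}}+\int_0^T\norm{\mathcal{N}(u_N)(t')}_{C^{-\eps}}\,dt'+\sup_{t}\norm{\Psi(t)}_{C^{-\eps}}$, integrates against $\E$ and $dP^N_{a_1,c_1,\gamma,2m}$, and then uses Tonelli together with the invariance of $P^N_{a_1,c_1,\gamma,2m}$ under the truncated flow to convert the time-integrated expectation of the nonlinearity into $T$ times the static integral $\int\norm{L^{(1)}_{m-1}(|\Slow u_0|^2;\sigma_N)\Slow u_0}_{C^{-\eps}}\,dP^N_{a_1,c_1,\gamma,2m}$, which is bounded uniformly in $N$ by H\"older's inequality, Proposition \ref{PROP: Same laws} and Proposition \ref{PROP: G bound indep N}. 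You instead run the Bourgain-type iteration: partition $[0,T]$ into $\sim R^{\theta}$ intervals of the local existence length, control the data at each node on a good event whose complement has super-polynomially small probability by stationarity plus a union bound, and integrate the tail. Both arguments hinge on the same invariance input (your ``law of $u_N(t_j)$ equals $P^N_{a_1,c_1,\gamma,2m}$'' is exactly the paper's invariance applied through the transition semigroup), but they use it differently: the paper needs only the first moment of the Wick-ordered nonlinearity under the Gibbs measure and entirely avoids the quantitative local theory, union bounds and tail estimates, whereas your route needs the $\tau\sim R^{-\theta}$ local theory uniformly in $N$ (which, as you correctly note, follows from the uniform boundedness of $\Slow$ on $C^{-\eps}$) together with Gaussian-type tails for $\mu_{a_1/\gamma}$ and the Wick powers. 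In exchange your argument yields tail bounds on $\sup_{t}\norm{u_N(t)}_{C^{-\eps}}$ rather than only a first-moment bound, and it doubles as the globalization argument itself; for the stated conclusion, however, it is the heavier of the two.
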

\begin{proof}
We follow an argument similar to the one in \cite{DPD}. See also \cite[Proof of Theorem 1]{DDF}. Let $u_N = \Phi_N(t,u_0,\omega)$ be the solution to \eqref{EQU: SMOOTH renormalised equ for u_N}. Then $u_N$ satisfies the mild formulation
\begin{equation*}
    u_N(t)=S(t)u_0+\int_0^tS(t-t')\mathcal{N}(u_N)dt'+ \Psi(t).
\end{equation*}
Taking the $C^{-\eps}$ norm of both sides and using Proposition \ref{PROP: Heat smooth} and the $C^{-\eps}\rightarrow C^{-\eps}$ boundedness of $S_N$ we get,
\begin{align*}
    \norm{u_N(t)}_{C^{-\eps}}\lesssim \norm{u_0}_{C^{-\eps}}+\int_0^t\norm{L^{(1)}_{m-1}(|\Slow u_N|^2;\sigma_N) \Slow u_N(t')}_{C^{-\eps}}dt'+\norm{\Psi(t)}_{C^{-\eps}}.
\end{align*}
Taking the supremum over $[0,T]$ of both sides we get,
\begin{equation*}
    \sup\limits_{t\in[0,T]}\norm{u_N(t)}_{C^{-\eps}}\lesssim \norm{u_0}_{C^{-\eps}}+\int_0^T\norm{L^{(1)}_{m-1}(|\Slow u_N|^2;\sigma_N) \Slow u_N(t')}_{C^{-\eps}}dt'+ \sup\limits_{t\in[0,T]}\norm{\Psi(t)}_{C^{-\eps}}.
\end{equation*}
Taking the expectation of both sides, integrating both sides over $C^{-\eps}$ with respect to $dP_{a_1,c_1,\gamma,2m}$ and then using Tonelli's Theorem to switch the order of integration in the second term gives,
\begin{align*}
    \int_{C^{-\eps}}\E[\sup\limits_{t\in[0,T]}\norm{u_N(t)}_{C^{-\eps}}]&dP_{a_1,c_1,\gamma,2m}^N(u_0)\lesssim \\
    &\lesssim \int_{C^{-\eps}} \norm{u_0}_{C^{-\eps}}dP_{a_1,c_1,\gamma,2m}^N(u_0) \hphantom{=}+\E\big[\sup\limits_{t\in[0,T]}\norm{\Psi(t)}{C^{-\eps}}\big]\\
    &\hphantom{=}+\int_0^T\int_{C^{-\eps}}\E\big[\norm{L^{(1)}_{m-1}(|\Slow u_N|^2;\sigma_N) \Slow u_N(t')}_{C^{-\eps}}\big]dP_{a_1,c_1,\gamma,2m}^Ndt'\\
    & = (\textnormal{I}) + (\textnormal{II}) + (\textnormal{III}) .
\end{align*}
We will estimate the three terms on the right hand of the above inequality separately and show
\begin{equation*}
    (\textnormal{I}) +(\textnormal{II}) + (\textnormal{III}) \leq C_T.
\end{equation*}
We can estimate $(\textnormal{I})$ using H\"older's inequality and Propositions \ref{PROP: Same laws} and \ref{PROP: G bound indep N},
\begin{align*}
    (\textnormal{I}) &= \int_{C^{-\eps}} \norm{u_0}_{C^{-\eps}}G^N_{\frac{c_1}{\gamma},2m}(u_0)d\mu_{\frac{a_1}{\gamma}}(u_0)\\
    &\lesssim  \bigg(\int_{C^{-\eps}}\norm{u_0}_{C^{-\eps}}^2d\mu_{\frac{a_1}{\gamma}}(u_0)\bigg)^{\frac{1}{2}}\bigg(\int_{C^{-\eps}}G^N_{\frac{c_1}{\gamma},2m}(u_0)^2d\mu_{\frac{a_1}{\gamma}}(u_0) \bigg)^{\frac{1}{2}}\\
    &\lesssim C
\end{align*}
independently of $N$. We can estimate $(\textnormal{II})$ using Proposition \ref{PROP:sconv}. For $(\textnormal{III})$ we claim that 
\begin{align*}
    (\textnormal{III}) = T\int_{C^{-\eps}} \norm{L^{(1)}_{m-1}(|\Slow u_0|^2;\sigma_N) \Slow u_0}_{C^{-\eps}}dP_{a_1,c_1,\gamma,2m}^N(u_0).
\end{align*}
To prove this claim we use an argument in \cite[Proof of Theorem 1]{DDF}. Defining the functions $\psi_m: C^{-\eps} \mapsto \C$  by $\psi_m(f) = \max(m, \norm{L^{(1)}_{m-1}(|S_N f|^2; \sigma_N)f}_{C^{-\eps}})$ we have that $\psi_m$ are Borel and bounded. Using the dominated convergence theorem, the definition of the transition semi-group, the invariance of $P_{a_1,c_1,\gamma,2m}^N$ and then the dominated convergence theorem again we get,  
\begin{align*}
    (\textnormal{II}) & = \int_0^T\int_{C^{-\eps}}\E\big[\norm{L^{(1)}_{m-1}(|\Slow u_N|^2;\sigma_N) \Slow u_N(t')}_{C^{-\eps}}\big]dP_{a_1,c_1,\gamma,2m}^Ndt'\\
    & = \int_0^T \lim_{m\rightarrow\infty}\int_{C^{-\eps}} P_{t'}^N\psi_mdP_{a_1,c_1,\gamma,2m}^N dt'\\
    & = \int_0^T \lim_{m\rightarrow\infty}\int_{C^{-\eps}} \psi_m  dP_{a_1,c_1,\gamma,2m}^N dt'\\
    & = T\int_{C^{-\eps}} \norm{L^{(1)}_{m-1}(|\Slow u_0|^2;\sigma_N) \Slow u_0}_{C^{-\eps}}dP_{a_1,c_1,\gamma,2m}^N(u_0).
\end{align*}
From H\"older's inequality and  Propositions \ref{PROP: Same laws} and \ref{PROP: G bound indep N} we then have,
\begin{align*}
    (\textnormal{II}) &\lesssim T\int_{C^{-\eps}} \norm{L^{(1)}_{m-1}(|\Slow u_0|^2;\sigma_N) \Slow u_0}_{C^{-\eps}}dP_{a_1,c_1,\gamma,2m}^N(u_0)\\
    & \lesssim C_T\bigg(\int_{C^{-\eps}}\norm{L^{(1)}_{m-1}(|\Slow u_0|^2;\sigma_N) \Slow u_0}_{C^{-\eps}}^2d\mu_{\frac{a_1}{\gamma}}(u_0)\bigg)^{\frac{1}{2}}\bigg(\int_{C^{-\eps}}G^N_{\frac{c_1}{\gamma},2m}(u_0)^2d\mu_{\frac{a_1}{\gamma}}(u_0) \bigg)^{\frac{1}{2}}\\
    &\lesssim C_T
\end{align*}
giving the bound for $(\textnormal{III})$.

\end{proof}

We now upgrade the previous proposition to the following.

\begin{proposition}\label{PROP: a priori prob bound}
Suppose $\frac{a_1}{a_2} = \frac{c_1}{c_2}$. For any $T>0$ there exists a constant $C_T>0$ such that
\begin{equation*}
    \int_{C^{-\eps}}\E\big[ \sup\limits_{t\in[0,T\wedge T^*(u_0,\omega)} \norm{\Phi(t,u_0,\omega)}_{C^{-\eps}}\big]\,dP_{a_1,c_1,\gamma,2m}(u_0)\leq C_T.
\end{equation*}
Here $T^*(u_0, \omega)>0$ a.s. is the maximal time of existence of $u$ according to Proposition \ref{LWP}. 
\end{proposition}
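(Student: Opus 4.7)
The plan is to lift the uniform bound of Proposition \ref{PROP: truncated a priori prob bound} from the truncated measures $P^N_{a_1,c_1,\gamma,2m}$ to the limiting measure $P_{a_1,c_1,\gamma,2m}$ by combining three ingredients: (a) the pathwise convergence $\Phi_N \to \Phi$ on bounded subintervals of $[0, T^*(u_0,\omega))$ from Proposition \ref{PROP: vN approximates v}; (b) the $L^2(\mu_{a_1/\gamma})$ convergence $G^N_{c_1/\gamma,2m} \to G_{c_1/\gamma,2m}$ from Proposition \ref{PROP: G bound indep N} together with the uniform positivity $Z^N \geq \delta$ in \eqref{EQU: Z bounded below}; and (c) two applications of Fatou's lemma, dealt with separately in $\omega$ and in $u_0$.

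Fix $T' \in (0, T)$ and $K \geq 1$. For $\mu_{a_1/\gamma}$-a.e.\ initial datum $u_0$ we have $T^*(u_0,\omega) > 0$ almost surely, and by Proposition \ref{PROP: vN approximates v}, for every $T_0 < T^*(u_0,\omega)$, $\Phi_N(\cdot,u_0,\omega) \to \Phi(\cdot,u_0,\omega)$ uniformly on $[0, T_0]$ with values in $C^{-\eps}$. Combining this with the trivial monotonicity $\sup_{[0,T_0]} \le \sup_{[0,T']}$ and letting $T_0 \uparrow T' \wedge T^*(u_0,\omega)$,
\begin{equation*}
    \sup_{t\in[0, T' \wedge T^*(u_0,\omega)]} \|\Phi(t,u_0,\omega)\|_{C^{-\eps}} \;\leq\; \liminf_{N \to \infty} \sup_{t\in[0, T']} \|\Phi_N(t,u_0,\omega)\|_{C^{-\eps}}
\end{equation*}
almost surely in $\omega$. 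Applying Fatou's lemma in $\omega$ and truncating at level $K$ we obtain the bounded functionals
\begin{align*}
    H^K(u_0) &:= \min\!\Big(K,\, \E\big[\sup_{t\in[0,T' \wedge T^*(u_0,\omega)]} \|\Phi(t,u_0,\omega)\|_{C^{-\eps}}\big]\Big),\\
    H^K_N(u_0) &:= \min\!\Big(K,\, \E\big[\sup_{t\in[0,T']} \|\Phi_N(t,u_0,\omega)\|_{C^{-\eps}}\big]\Big),
\end{align*}
satisfying $H^K(u_0) \leq \liminf_N H^K_N(u_0)$ pointwise $\mu_{a_1/\gamma}$-a.e.

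Writing $dP_{a_1,c_1,\gamma,2m} = Z^{-1} G \, d\mu_{a_1/\gamma}$ and $dP^N_{a_1,c_1,\gamma,2m} = (Z^N)^{-1} G^N \, d\mu_{a_1/\gamma}$, Fatou's lemma against the positive finite measure $Z^{-1} G \, d\mu_{a_1/\gamma}$ yields
\begin{equation*}
    \int H^K \, dP_{a_1,c_1,\gamma,2m} \;\leq\; \liminf_{N \to \infty} \int H^K_N \cdot \frac{G}{Z} \, d\mu_{a_1/\gamma}.
\end{equation*}
Proposition \ref{PROP: G bound indep N} with $p = 2$ combined with $Z^N \to Z \geq \delta$ gives $G^N/Z^N \to G/Z$ in $L^1(\mu_{a_1/\gamma})$; since $0 \leq H^K_N \leq K$,
\begin{equation*}
    \int H^K_N \cdot \tfrac{G}{Z} \, d\mu_{a_1/\gamma} \;=\; \int H^K_N \, dP^N_{a_1,c_1,\gamma,2m} \;+\; O\!\big(K \, \|G/Z - G^N/Z^N\|_{L^1(\mu_{a_1/\gamma})}\big),
\end{equation*}
and Proposition \ref{PROP: truncated a priori prob bound} bounds the first term by $C_T$. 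Hence $\int H^K \, dP_{a_1,c_1,\gamma,2m} \leq C_T$ for every $K$. Monotone convergence in $K \uparrow \infty$ and then in $T' \uparrow T$ delivers the claim.

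The main obstacle is the asymmetry between $\Phi_N$, which is globally defined by Proposition \ref{PROP: SMOOTH vN is GWP}, and $\Phi$, which is only defined up to the random time $T^*(u_0,\omega)$: naively passing to the limit in $\sup_{[0,T]}$ on both sides is not permitted. The cut-off at level $K$, together with the $L^1(\mu_{a_1/\gamma})$ convergence of Radon--Nikodym densities, is precisely what absorbs this asymmetry uniformly in $N$ and reduces everything to the truncated bound already established.
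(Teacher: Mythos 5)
Your proof is correct and follows essentially the same route as the paper's: both pass from the uniform truncated bound of Proposition \ref{PROP: truncated a priori prob bound} to the limiting measure via Fatou's lemma, using the convergence $\Phi_N\to\Phi$ from Proposition \ref{PROP: vN approximates v} and $G^N_{c_1/\gamma,2m}\to G_{c_1/\gamma,2m}$ from Proposition \ref{PROP: G bound indep N}. Your version is merely more careful in the bookkeeping --- the level-$K$ truncation, the $L^1(\mu_{a_1/\gamma})$ convergence of the normalized densities $G^N/Z^N\to G/Z$, and the limit $T'\uparrow T$ make explicit the handling of the normalizing constants and avoid the subsequence extraction the paper invokes --- but the underlying argument is identical.
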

\begin{proof}
From Proposition \ref{PROP: G bound indep N} we know that $G^N_{c,2m}(u_0)\rightarrow G_{c,2m}(u_0)$  $P_{a_1,c_1,\gamma,2m}$-almost everywhere, up to a sub sequence which we still denote by $\{G^N_{c,2m}\}$ by abuse of notation. Further, by Proposition \ref{PROP: vN approximates v}, $u_N\rightarrow u$ in $C([0, T \wedge T^*(u_0,\omega)], C^{-\eps})$ where $u$ solves \eqref{EQU: renormalised equ for u} with initial data $u_0$ and $u_N=\Phi_N(t,u_0,\omega)$ solves \eqref{EQU: SMOOTH trunc v SCGL}. 

Hence by Fatou's lemma we have,
\begin{align*}
     \int_{C^{-\eps}}\E\big[ \sup\limits_{t\in[0,T\wedge T^*(u_0)} &\norm{\Phi(t,u_0,\omega)}_{C^{-\eps}}\big]\,dP_{a_1,c_1,\gamma,2m}(u_0)\\ 
     & = \int_{C^{-\eps}}\E\big[ \sup\limits_{t\in[0,T\wedge T^*(u_0)} \norm{u(t)}_{C^{-\eps}}\big]G_{\frac{c_1}{\gamma}, 2m}(u_0)\,d\mu_{\frac{a_1}{\gamma}}(u_0)\\
     & \lesssim \liminf\limits_{N\rightarrow\infty} \int_{C^{-\eps}}\E\big[ \sup\limits_{t\in[0,T\wedge T^*(u_0)} \norm{u_N(t)}_{C^{-\eps}}\big]G_{\frac{c_1}{\gamma}, 2m}^N(u_0)\,d\mu_{\frac{a_1}{\gamma}}(u_0)\\
     &< C_T.
\end{align*}
\end{proof}

We can now complete the proof of Theorem \ref{Th: AS GWP}.

\begin{proof}[Proof of Theorem \ref{Th: AS GWP}]
Consider a sequence of times $T_k\rightarrow \infty$ as $k\rightarrow\infty$. Then, by the above proposition, for each $k$ there exists a set $U_k$ of $P_{\frac{a_1}{\gamma},\frac{c_1}{\gamma},2m}$-measure one such that for each $u_0\in U_k$ 
\begin{equation*}
    \sup\limits_{t\in[0,T\wedge T^*(u_0)]}\norm{u(t)}_{C^{-\eps}}<\infty\quad \textnormal{almost surely.}
\end{equation*}
From the ansatz $u=v+\Psi$ and Proposition \ref{PROP:sconv} we then have
\begin{equation*}
    \sup\limits_{t\in[0,T\wedge T^*(u_0)]}\norm{v(t)}_{C^{-\eps}}<\infty\quad \textnormal{almost surely.}
\end{equation*}
where $v(t)$ is the solution to \eqref{EQU: for v SCGL}. Proposition \ref{solntoSCGL11} then implies that $T^*(u_0)\geq T_k$ almost surely. Taking $U=\bigcap\limits_{k\geq 0}U_k$ we have our desired measure one set. 
\end{proof}

This almost sure global existence result allows us to define the solution map $\Phi:\R_+\times U\times \Omega \rightarrow C^{-\eps}$ associated to \eqref{SCGL11} with initial data in $U$. Further we can define the transition semi-group $P_t$ associated to \eqref{SCGL11} by
\begin{equation*}
    P_t\psi(u_0) = \E\big[\psi(\Phi(t,u_0,\omega)  \big]
\end{equation*}

Finally we show that the measure $P_{a_1,c_1,\gamma,2m}$ is an invariant measure for \eqref{SCGL11}, proving Theorem \ref{THM: invariance}.

\begin{theorem}
Suppose $\frac{a_1}{a_2} = \frac{c_1}{c_2}$. Then the measure $P_{a_1,c_1,\gamma,2m}$ is an invariant measure for \eqref{SCGL11}. That is,
\begin{equation*}
    \int_{C^{-\eps}}P_t\psi\,dP_{a_1,c_1,\gamma,2m} = \int_{C^{-\eps}}\psi\,dP_{a_1,c_1,\gamma,2m}
\end{equation*}
for all $t>0$ where $P_t$ is the transition semi-group associated to \eqref{SCGL11}.
\end{theorem}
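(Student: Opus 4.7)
The plan is to pass to the limit $N\to\infty$ in the invariance identity
\begin{equation*}
\int_{C^{-\eps}} P^N_t \psi \, dP^N_{a_1,c_1,\gamma,2m} = \int_{C^{-\eps}} \psi \, dP^N_{a_1,c_1,\gamma,2m},
\end{equation*}
which we already know for every $N$ and every $\psi\in C_b(C^{-\eps})$. The right-hand side is straightforward: since $G^N_{c_1/\gamma,2m}\to G_{c_1/\gamma,2m}$ in $L^1(\mu_{a_1/\gamma})$ by Proposition~\ref{PROP: G bound indep N}, and $Z^N_{a_1,c_1,\gamma,2m}\to Z_{a_1,c_1,\gamma,2m}>0$ by the same proposition together with \eqref{EQU: Z bounded below}, the boundedness of $\psi$ gives $\int\psi\,dP^N_{a_1,c_1,\gamma,2m}\to\int\psi\,dP_{a_1,c_1,\gamma,2m}$ as $N\to\infty$.

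For the left-hand side, the first step is to establish pointwise convergence $P^N_t\psi(u_0)\to P_t\psi(u_0)$ for every $u_0$ in the full-measure set $U$ constructed in the proof of Theorem~\ref{Th: AS GWP}. Fix such a $u_0$; by construction, the solution $u=v+\Psi$ to \eqref{EQU: renormalised equ for u} exists on $[0,t]$ almost surely. Proposition~\ref{PROP: vN approximates v} then yields $v_N\to v$ in $C([0,t];C^{s_0})$ almost surely, while Proposition~\ref{PROP:sconv} gives $\Psi_N\to\Psi$ in the same space. Hence $\Phi_N(t,u_0,\omega)\to \Phi(t,u_0,\omega)$ in $C^{-\eps}$ almost surely, and bounded convergence together with continuity of $\psi$ yields $P^N_t\psi(u_0)=\E[\psi(\Phi_N(t,u_0,\omega))]\to \E[\psi(\Phi(t,u_0,\omega))]=P_t\psi(u_0)$.

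Next I would handle the integral against the varying density $G^N$. Writing the left-hand side as $(Z^N)^{-1}\int P^N_t\psi\cdot G^N\,d\mu_{a_1/\gamma}$, I split
\begin{equation*}
\int (P^N_t\psi) G^N\,d\mu_{a_1/\gamma}-\int (P_t\psi) G\,d\mu_{a_1/\gamma}
= \int (P^N_t\psi)(G^N-G)\,d\mu_{a_1/\gamma} + \int (P^N_t\psi-P_t\psi)G\,d\mu_{a_1/\gamma}.
\end{equation*}
The first term is bounded by $\|\psi\|_\infty\|G^N-G\|_{L^1(\mu_{a_1/\gamma})}\to 0$ by Proposition~\ref{PROP: G bound indep N}. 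For the second term, we have $|P^N_t\psi-P_t\psi|\le 2\|\psi\|_\infty$ and $P^N_t\psi\to P_t\psi$ pointwise on $U$, which has full $P_{a_1,c_1,\gamma,2m}$-measure (hence full $G\,d\mu_{a_1/\gamma}$-measure); so by dominated convergence with majorant $2\|\psi\|_\infty G\in L^1(\mu_{a_1/\gamma})$, this term also vanishes. Dividing by $Z^N\to Z$ and sending $N\to\infty$ yields $\int P^N_t\psi\,dP^N\to\int P_t\psi\,dP$, and combining with the convergence of the right-hand side produces the desired identity.

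The main obstacle is the pointwise convergence step $P^N_t\psi\to P_t\psi$ on a set of full $P_{a_1,c_1,\gamma,2m}$-measure: one must be careful that the set $U$ on which the limit equation is almost surely globally well-posed is indeed the same set (or at least a full-measure subset) on which the approximation $\Phi_N\to\Phi$ holds. This is guaranteed because Proposition~\ref{PROP: vN approximates v} applies up to the maximal time of existence $T^*(u_0,\omega)$, and by construction of $U$ in Theorem~\ref{Th: AS GWP} we have $T^*(u_0,\omega)\ge T_k\to\infty$ almost surely for $u_0\in U$; so for every fixed $t$, $v_N(t)\to v(t)$ in $C^{-\eps}$ almost surely for each $u_0\in U$, which is exactly what the dominated-convergence argument needs.
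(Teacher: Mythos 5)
Your proposal is correct and follows essentially the same route as the paper: pass to the limit in the truncated invariance identity, using the $L^p(\mu_{a_1/\gamma})$ convergence $G^N\to G$ together with $Z^N\to Z>0$ for the density, and the almost sure convergence $\Phi_N(t,u_0,\omega)\to\Phi(t,u_0,\omega)$ on the full-measure set $U$ plus dominated convergence for the semigroup term. The only differences are cosmetic (you add and subtract $\int (P^N_t\psi)\,G\,d\mu_{a_1/\gamma}$ where the paper adds and subtracts $\int P_t\psi\,dP^N_{a_1,c_1,\gamma,2m}$), and your handling of the varying density is if anything slightly more careful than the paper's.
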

\begin{proof}
We follow an approximation argument as in \cite{DDF}. Let $\psi \in C_b(C^{-\eps})$. From the invariance of $dP_{a_1,c_1,\gamma,2m}^N$,
\begin{equation}\label{EQU: trunc invariance}
    \int_{C^{-\eps}}P_t\psi\,dP_{a_1,c_1,\gamma,2m}^N = \int_{C^{-\eps}}\psi\,dP_{a_1,c_1,\gamma,2m}^N.
\end{equation}
For the left hand side of \eqref{EQU: trunc invariance} we have,
\begin{align}\label{EQU: left hand side Trans semi group}
    \bigg|\int_{C^{-\eps}}&P_t\psi\,dP_{a_1,c_1,\gamma,2m}-\int_{C^{-\eps}}P_t^N\psi\,dP_{a_1,c_1,\gamma,2m}^N\bigg|\\
    &\leq \bigg|\int_{C^{-\eps}}\E\big[\psi(\Phi(t,u_0,\omega))\big]dP_{a_1,c_1,\gamma,2m}-\int_{C^{-\eps}}\E\big[\psi(\Phi(t,u_0,\omega))\big]dP_{a_1,c_1,\gamma,2m}^N\bigg|\nonumber\\
    &\hspace{10pt}+\bigg|\int_{C^{-\eps}}\big(\E\big[\psi(\Phi(t,u_0,\omega))\big] - \E\big[\psi(\Phi_N(t,u_0,\omega))\big]\big) dP_{a_1,c_1,\gamma,2m}\bigg|\nonumber\\
    &\leq \norm{\psi}_{L^\infty}\big| Z_{a_1,c_1,\gamma,2m}-Z_{a_1,c_1,\gamma,2m}^N\big|\nonumber\\
    &\hspace{10pt}+\int_{C^{-\eps}}\big|\E\big[\psi(\Phi(t,u_0,\omega))\big] - \E\big[\psi(\Phi_N(t,u_0,\omega))\big]\big| dP_{a_1,c_1,\gamma,2m}\nonumber.
\end{align}
The first term on the right hand side of \eqref{EQU: left hand side Trans semi group} goes to zero from Proposition \ref{PROP: G bound indep N}. The second term on the right hand side of \eqref{EQU: left hand side Trans semi group} goes to zero from a combination of the fact that $\Phi_N(t,u_0,\omega)\rightarrow \Phi_N(t,u_0,\omega)$ in $C([0,T];C^{-\eps})$ for almost every $u_0\in \supp P_{a_1,c_1,\gamma,2m}$, the continuity of $\psi$ and the dominated convergence theorem.

For the right hand side of \eqref{EQU: trunc invariance},
\begin{align*}
\bigg|\int_{C^{-\eps}}\psi\,dP_{a_1,c_1,\gamma,2m}&-\int_{C^{-\eps}}\psi\,dP_{a_1,c_1,\gamma,2m}^N\bigg|\nonumber\\    &\lesssim ({Z_{a_1,c_1,\gamma,2m}^N})^{-1}\bigg|\int_{C^{-\eps}}\psi\, G_{\frac{c_1}{\gamma},2m}d\mu_\frac{a_1}{\gamma} -\int_{C^{-\eps}}\psi\, G_{\frac{c_1}{\gamma},2m}^Nd\mu_\frac{a_1}{\gamma}\bigg|\nonumber\\
&\hspace{10pt}+\big(({Z_{a_1,c_1,\gamma,2m}^N})^{-1}-({Z_{a_1,c_1,\gamma,2m}^N})^{-1}\big)\int_{C^{-\eps}}\psi\,G_{\frac{c_1}{\gamma},2m}d\mu_{\frac{a_1}{\gamma}}\nonumber\\
&\leq \delta^{-1}\norm{\psi}_{L^\infty}\big|Z_{a_1,c_1,\gamma,2m}-Z_{a_1,c_1,\gamma,2m}^N\big|\nonumber\\
&\hspace{10pt}+\delta^{-2}\big|Z_{a_1,c_1,\gamma,2m}-Z_{a_1,c_1,\gamma,2m}^N\big|\norm{\psi}_{L^\infty}\delta\nonumber\\
&\leq \delta^{-1}\norm{\psi}_{L^\infty}\big|Z_{a_1,c_1,\gamma,2m}-Z_{a_1,c_1,\gamma,2m}^N\big|
\end{align*}
where in the second inequality $\delta$ is the constant in \eqref{EQU: Z bounded below}. The right hand side of this inequality goes to zero from Proposition \ref{PROP: G bound indep N}.
\end{proof}

\begin{ackno}\rm
The author would like to thank his supervisor Tadahiro Oh for suggesting this problem and the support in completing it. The author would also like to thank Leonardo Tolomeo for many helpful discussions related to this work and Justin Forlano for some helpful suggestions regarding the introduction of this paper.
The author was supported by The Maxwell Institute Graduate School in Analysis and its
Applications, a Centre for Doctoral Training funded by the UK Engineering and Physical
Sciences Research Council (grant EP/L016508/01), the Scottish Funding Council, Heriot-Watt
University and the University of Edinburgh.
\end{ackno}

\end{document}